\newcommand{\ip}[2]{\langle #1,\, #2\rangle}	
\newtheorem{theorem}{Theorem}
\newtheorem{proposition}[theorem]{Proposition}
\newtheorem{remark}[theorem]{Remark}
\newtheorem{definition}[theorem]{Definition}
\newtheorem{lemma}[theorem]{Lemma}
\numberwithin{equation}{section}
\numberwithin{theorem}{section}
\newcommand{\Prob}{\mathcal{P}}
\newcommand{\recon}{\mathcal{R}}
\newcommand{\proj}{\mathcal{A}}
\newcommand{\W}{\mathcal{W}}
\newcommand{\R}{\mathbb{R}}
\newcommand{\N}{\mathbb{N}}
\newcommand{\Z}{\mathbb{Z}}
\newcommand{\Dx}{{\Delta x}}
\newcommand{\Dt}{{\Delta t}}
\newcommand{\Dy}{{\Delta y}}
\newcommand{\loc}{{\mathrm{loc}}}
\renewcommand{\leq}{\leqslant}
\renewcommand{\geq}{\geqslant}
\renewcommand{\phi}{\varphi}
\newcommand{\Lip}{\mathrm{Lip}}
\newcommand{\wto}{\rightharpoonup}
\newcommand{\hf}{{\unitfrac{1}{2}}}
\newcommand{\thf}{{\unitfrac{3}{2}}}
\newcommand{\iphf}{{i+\hf}}
\newcommand{\ipthf}{{i+\thf}}
\newcommand{\imhf}{{i-\hf}}
\newcommand{\jphf}{{j+\hf}}
\newcommand{\kphf}{{k+\hf}}
\newcommand{\lphf}{{l+\hf}}
\newcommand{\jmhf}{{j-\hf}}
\newcommand{\cell}{{\mathcal{C}}}
\renewcommand{\epsilon}{\varepsilon}
\newcommand{\bx}{\mathbf{x}}
\DeclareMathOperator{\sgn}{sgn}
\title{A second-order numerical method for the aggregation equations}
\author{Jos\'e A. Carrillo}
\address{Mathematical Institute, University of Oxford, Oxford OX2 6GG, UK}
\email{carrillo@maths.ox.ac.uk} 
\author{Ulrik S. Fjordholm}
\address{Department of Mathematics, University of Oslo, 0851 Oslo, Norway}
\email{ulriksf@math.uio.no}
\author{Susanne Solem}
\address{Department of Mathematical Sciences, Norwegian University of Science and Technology, 7491 Trondheim, Norway}
\email{susanne.solem@ntnu.no}
\date{}
\subjclass[2010]{35R09, 35D30, 35Q92, 65M12, 65M08}
\keywords{Aggregation equations, numerical methods, weak measure solutions, measure reconstruction. }
\begin{document}

\begin{abstract}
\noindent Inspired by so-called TVD limiter-based second-order schemes for hyperbolic conservation laws, we develop a formally second-order accurate numerical method for multi-dimensional aggregation equations. The method allows for simulations to be continued after the first blow-up time of the solution. In the case of symmetric, $\lambda$-convex potentials with a possible Lipschitz singularity at the origin we prove that the method converges in the Monge--Kantorovich distance towards the unique gradient flow solution. Several numerical experiments are presented to validate the second-order convergence rate and to explore the performance of the scheme.
\end{abstract}

\maketitle


\section{Introduction}
\noindent In this paper we derive and analyze a formally second-order accurate numerical method for the aggregation equation
\begin{equation}\label{eq:agg_eq}
\partial_t\rho = \nabla\cdot\bigl(\big(\nabla W * \rho\big)\rho  \bigr), \qquad \rho(0)=\rho^0
\end{equation}
where $\rho=\rho(t) \in \Prob(\R^d)$ is a time-parametrized probability measure on $\R^d$ and $\rho^0\in\Prob(\R^d)$ is given. The interaction potential $W : \R^d\to\R$ is assumed to satisfy some or all of the following conditions:
\begin{enumerate}[label=\textbf{(A\arabic*)}]
\item \label{cond:lip} $W$ is Lipschitz continuous, $W(\bx)=W(-\bx)$ and $W(0) = 0$.
\item \label{cond:c1} $W \in C^1\bigl(\R^d\setminus \{0\}\bigr)$.
\item \label{cond:convex} $W$ is $\lambda$-convex for some $\lambda \geq 0$, i.e $W(x)+\frac{\lambda}{2}|x|^2$ is convex.
\end{enumerate}
Potentials satisfying \ref{cond:lip}--\ref{cond:convex} with a Lipschitz singularity at the origin are the so-called \textit{pointy potentials}. When $W$ is a pointy potential, weak solutions to \eqref{eq:agg_eq} might concentrate into Dirac measures in finite time. The finite time blow-up of solutions has attracted a lot of attention, see \cite{LT,BV,BCL2009,BLR2010,HB2010}, wherein almost sharp conditions were given for finite time blow-up and typical blow-up profiles were studied. This finite time blow-up phenomenon explains the necessity of considering measure valued solutions of \eqref{eq:agg_eq}. By utilizing the gradient flow structure of \eqref{eq:agg_eq}, Carrillo et al.~\cite{CDFLS2011} proved existence and uniqueness of solutions to \eqref{eq:agg_eq} when $W$ satisfies \ref{cond:lip}--\ref{cond:convex}. 

Aggregation equations of the form \eqref{eq:agg_eq}, being the continuum limits of particle systems described by
\begin{align} \label{eq:particlesys}
\dot{\bx}_i = -\sum_{i \neq j} m_j \nabla W (\bx_i-\bx_j), \quad \sum_i m_i = 1, \quad m_i > 0,
\end{align}
where $\bx_i$ are the particle positions and $m_i$ the weights,
are ubiquitous in modelling concentration in applied mathematics. They find applications in physical and biological sciences, to name a few: granular materials \cite{BCP,LT,CV2002,CMV2003}, particle assembly \cite{HP2006}, swarming \cite{okubo,TB2004,MCO2005,TBL2006,review2013}, bacterial chemotaxis \cite{KeSe70,FiLauPe04,JV2013}, and opinion dynamics \cite{MT14}. Furthermore, attraction-repulsion potentials have recently been proposed as very simple models of pattern formation due to the rich structure of the set of stationary solutions, see \cite{PSTV,BCLR2011,BU,BUKB,BCLR2013,KSUB,BKSUV} for instance.

The numerical method proposed in this paper is motivated by the fact that the Burgers-type equation
\begin{equation}\label{eq:burgers}
\partial_tu + \partial_x f(u)=0, \qquad f(u) = \pm (u-u^2)
\end{equation}
and the one-dimensional aggregation equation
\begin{equation}\label{eq:agg_eq_1d}
\partial_t\rho = \partial_x\bigl(\big(W' * \rho\big)\rho \bigr)
\end{equation}
with $W(x) = \pm |x|$ are equivalent, see \cite{BV,BCFP2015}. Indeed, defining the primitive $u(x,t) = \int^x_{-\infty}\rho(dy,t)$, we see that
\begin{equation*}
W' * \rho = \pm \sgn *\, \rho = \pm(2u-1).
\end{equation*}
Integrating \eqref{eq:agg_eq_1d} over $(-\infty,x]$ therefore gives \eqref{eq:burgers}. Thus, formally speaking, differentiating \eqref{eq:burgers} in $x$ yields \eqref{eq:agg_eq_1d}. This intuition was made rigorous in \cite{BCFP2015} in which entropy solutions to \eqref{eq:burgers} for nondecreasing initial data are shown to be equivalent to gradient flow solutions to \eqref{eq:agg_eq_1d} for measure valued initial data.

Our starting point is a formally second-order accurate finite volume method for solutions of Burgers' equation \eqref{eq:burgers}. By ``differentiating the method'' in $x$ we obtain a numerical method for \eqref{eq:agg_eq_1d} with $W(x)=\pm|x|$. This method is then extended to the class of potentials $W$ satisfying \ref{cond:lip}, \ref{cond:c1} and any dimension $d$. The order of accuracy of the method is preserved when measured in the right metric, namely the Monge--Kantorovich distance $d_1$. Indeed, the Monge--Kantorovich distance $d_1$ at the level of \eqref{eq:agg_eq_1d} corresponds to the $L^1$ norm at the level of \eqref{eq:burgers} in one dimension due to the relation
\begin{equation*}
d_1(\mu,\nu)= \sup_{\|\phi\|_\Lip\leq 1} \int_{\R} \phi(x)\ d(\mu-\nu)(x) =  \int_\R\big|(\mu-\nu)((-\infty,x])\big|\,dx = \|u-v\|_{L^1(\R)}.
\end{equation*}
The second-order accuracy of the numerical method for Burgers' equation is obtained by reconstructing the numerical approximation into a piecewise linear function in every timestep (see e.g.\ \cite{GR91,LeVeque}). A reconstruction also takes place in the proposed scheme, but the result of the procedure is a \emph{reconstructed measure}. This measure consists of a combination of constant values in the grid cells and Dirac deltas at the grid points. This mixed reconstruction, Diracs plus piecewise constants, is the main difference between our method compared to other methods (of lower order) to solve the aggregation equation with measure valued initial data \cite{JamesVauch2015,CJLV16,DLV2016,DLV2017}. Other numerical schemes based on finite volumes \cite{CCH15} or optimal transport strategies \cite{GT06,CM09,CRW16} have been proposed.

Above, and throughout the paper, we use the terms `formally second-order' and `second-order' in the sense of having a local truncation error of order $O(\Dt\Dx^2)$. This nomenclature is standard in the literature on numerical methods for hyperbolic conservation laws \cite{GR91,KNR95,LeVeque}. Such truncation error estimates rely on Taylor expansions of the exact solution and hence requires the existence of a smooth solution. There are very few rigorous convergence rate results available for such methods for general, non-smooth solutions (beyond the suboptimal $O(\Dx^{\hf})$ estimate due to Kuznetsov \cite{Kuz76}). We would expect that a rigorous convergence rate estimate for the methods presented here (beyond our local truncation estimate) would require a substantial amount of work, and only apply in a limited number of scenarios. We refer to \cite{DLV2017} for a proof of an $O(\Dx^\hf)$ convergence rate for a numerical method for \eqref{eq:agg_eq}.

We derive the method for \eqref{eq:agg_eq_1d} with $W(x)=\pm |x|$ before generalizing it in one dimension to any potential satisfying \ref{cond:lip}, \ref{cond:c1} in Section 3. We study its properties, and show the convergence of the scheme for measure valued solutions in the distance $d_1$ in the main theorem. The scheme and the main theorem is generalized to any dimension in Section 4. Section 5 is devoted to validating the scheme in known particular cases together with accuracy tests and numerical explorations for both potentials covered by the theory and attractive-repulsive potentials not covered. Section 2 deals with the necessary preliminaries about gradient flow solutions to the aggregation equation \eqref{eq:agg_eq}.


\section{Preliminaries on gradient flow solutions}

We define the space of probability measures with finite $p$-th order moment, $1\leq p <\infty$ as
    $$
\Prob_p(\R^d) = \left\{\mu \text{ nonnegative Borel measure}, \mu(\R^d)=1,\ \int_{\R^d} |x|^p \mu(dx) <\infty\right\}.
    $$
This space is endowed with the optimal transport distance $d_p$ defined by
    \begin{equation*}
d_p(\mu,\nu)= \inf_{\gamma\in \Gamma(\mu,\nu)} \left(\int |y-x|^p\,\gamma(dx,dy)\right)^{1/p}
    \end{equation*}
where $\Gamma(\mu,\nu)$ is the set of measures on $\R^d\times\R^d$ with marginals $\mu$ and $\nu$ (see e.g.~\cite{Villani,AGS2005}).
The particular cases that will be useful in our present work are the Euclidean Wasserstein distance $d_2$ and the Monge--Kantorovich distance $d_1$.  Let 
\begin{equation}\label{eq:int_energ}
\W (\rho) = \frac{1}{2} \int_{\R^d \times \R^d} W(x-y) \rho (dx)\rho (dy)
\end{equation}
be the total potential energy associated to the aggregation equation \eqref{eq:agg_eq}. It is by now classical that the aggregation equation \eqref{eq:agg_eq} can be written as
$$
\frac{\partial \rho}{\partial t} = \nabla\cdot\left(\rho\nabla \frac{\delta \W}{\delta \rho}\right)\,,
$$
with $\frac{\delta \W}{\delta \rho}=W\ast\rho$ the variational derivative of the functional $\W$. This is the formal signature of the $d_2$-gradient flow structure of evolutions equations \cite{AGS2005,Villani,CMV2003,CMV2006}.  

We say that $\mu\in AC^\hf_{\loc}([0,+\infty);\Prob_2(\R^d))$ if $\mu$ is locally H\"older continuous of exponent $\hf$ in time with respect to the distance $d_2$ in $\Prob_2(\R^d)$.
A gradient flow solution associated to \eqref{eq:int_energ} is defined as follows, see \cite{AGS2005, CDFLS2011}.

\begin{definition}[Gradient flow solutions]\label{def:gradientflow}
Let $W$ satisfy the assumptions \ref{cond:lip}--\ref{cond:convex}. We say that a map $\rho \in AC^{\hf}_\loc \bigl([0, +\infty);\Prob_2(\R^d)\bigr)$ is a gradient flow solution of \eqref{eq:agg_eq} associated with the functional \eqref{eq:int_energ}, if there exists a Borel vector field $v$ such that $v(t) \in \textrm{Tan}_{\rho(t)}\Prob_2(\R^d)$ for
a.e. $t > 0$, i.e. $\|v(t)\|_{L^2(\rho)} \in L^2_{\loc}(0, +\infty)$, the continuity equation
\begin{equation}\label{eq:gradientfloweq}
\partial_t \rho + \nabla \cdot(v \rho) = 0,
\end{equation}
holds in the sense of distributions, and $v(t) = -\partial^0\W(\rho(t))$ for a.e. $t > 0$. Here, $\partial^0 \W(\rho)$ denotes the element of minimal norm in $\partial \W (\rho)$, the subdifferential of $\W$ at the point $\rho$.
\end{definition}
In \cite{CDFLS2011} it is shown that when $W$ satisfies \ref{cond:lip}--\ref{cond:convex} we have $\partial^0\W = \partial^0W * \rho$, where $\partial^0W(x) = \nabla W(x)$ for $x\neq0$ and $\partial^0W(0)=0$. Hence,
\begin{equation}\label{eq:min_nrm_velocity}
\partial^0\W(x,t) = \int_{x \neq y} \nabla W (x-y)\rho(dy,t)
\end{equation}
is the unique element of minimal norm when $W$ satisfies \ref{cond:lip}--\ref{cond:convex}.

\begin{theorem}[Well-posedness of gradient flow solutions \cite{CDFLS2011}]\label{thrm:unique}
Let $W$ satisfy assumptions \ref{cond:lip}--\ref{cond:convex}. Given $\rho^0 \in \Prob_2 (\R^d)$ there exists a unique gradient flow solution of \eqref{eq:agg_eq}, i.e.\ a curve $\rho \in AC^{\hf}_\loc ([0, +\infty);\Prob_2(\R^d))$ satisfying \eqref{eq:gradientfloweq}
in $\mathcal{D}'([0,\infty) \times \R^d)$ with 
$v(x,t) = -\partial^0 W \ast \rho$ and $\rho(0) = \rho^0$.
\end{theorem}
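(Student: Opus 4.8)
The statement fits squarely within the abstract theory of gradient flows in the Wasserstein space $(\Prob_2(\R^d), d_2)$ developed in \cite{AGS2005}, so the plan is to verify the hypotheses of that theory for the interaction energy $\W$ of \eqref{eq:int_energ} and then translate the abstract conclusion into the concrete form of Theorem~\ref{thrm:unique}. As a first step I would check that $\W$ is \emph{proper} and \emph{lower semicontinuous}. Since $W$ is Lipschitz with $W(0)=0$, we have $|W(x-y)| \leq L|x-y| \leq L(|x|+|y|)$, so the double integral in \eqref{eq:int_energ} is finite on all of $\Prob_2(\R^d)$ (indeed on $\Prob_1(\R^d)$); the same linear growth bound gives a lower bound controlled by the first moment, hence coercivity along bounded-energy sequences, while continuity of $W$ yields lower semicontinuity with respect to narrow convergence with bounded second moments.

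The decisive structural ingredient is \emph{geodesic convexity}, and this is the only place where \ref{cond:convex} is used in an essential way. Writing a (generalized) geodesic through an optimal map $T$ as $\rho_s = \bigl((1-s)\,\mathrm{id} + sT\bigr)_\#\rho_0$, the interaction energy becomes $\W(\rho_s) = \tfrac12\iint W\bigl((1-s)(x-y) + s(T(x)-T(y))\bigr)\,\rho_0(dx)\,\rho_0(dy)$. For fixed $x,y$, assumption \ref{cond:convex} says precisely that $s \mapsto W\bigl((1-s)a+sb\bigr) + \tfrac{\lambda}{2}\bigl|(1-s)a+sb\bigr|^2$ is convex along the segment from $a=x-y$ to $b=T(x)-T(y)$; integrating against $\rho_0\otimes\rho_0$ transfers this to semiconvexity of $s\mapsto\W(\rho_s)$ with a modulus controlled by $\lambda$. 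This McCann-type displacement-convexity argument for interaction potentials is what places $\W$ in the scope of the abstract machinery.

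With properness, lower semicontinuity, coercivity and $\lambda$-convexity along generalized geodesics established, the existence theorem of \cite{AGS2005} produces a curve of maximal slope as the limit of the minimizing-movement (JKO) scheme $\rho^{n+1}_\tau \in \arg\min_\rho \bigl( \tfrac{1}{2\tau}d_2(\rho,\rho^n_\tau)^2 + \W(\rho)\bigr)$, and the energy-dissipation estimate built into this scheme delivers exactly the $AC^\hf_\loc$ regularity in $d_2$. It then remains to identify this abstract flow with a distributional solution of the continuity equation \eqref{eq:gradientfloweq} driven by the minimal-norm velocity, i.e.\ to rewrite the abstract velocity as $v = -\partial^0 W \ast \rho$ as in \eqref{eq:min_nrm_velocity}. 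I expect this subdifferential computation to be the genuine obstacle for pointy potentials: because $\nabla W$ is discontinuous (or blows up) at the origin, one must show that the diagonal $\{x=y\}$ contributes nothing under the convention $\partial^0 W(0)=0$, and that $\partial^0 W \ast \rho$ is actually the element of \emph{minimal norm} in $\partial\W(\rho)$ rather than merely some element of the subdifferential. This is the technical heart of \cite{CDFLS2011}, and the excerpt's statement that $\partial^0\W = \partial^0 W \ast \rho$ is exactly what is needed to close this step.

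Finally, uniqueness and stability follow automatically from the $\lambda$-convexity: the abstract theory yields the contraction-type estimate $d_2(\rho_1(t),\rho_2(t)) \leq e^{\lambda t}\, d_2(\rho_1^0,\rho_2^0)$ for any two gradient flow solutions, which forces $\rho_1 \equiv \rho_2$ whenever $\rho_1^0 = \rho_2^0$ and thereby completes the proof.
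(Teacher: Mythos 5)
The paper contains no proof of this statement: Theorem \ref{thrm:unique} is imported verbatim from \cite{CDFLS2011}, and the surrounding identities (in particular $\partial^0\W = \partial^0 W \ast \rho$, leading to \eqref{eq:min_nrm_velocity}) are likewise quoted from that reference, so your sketch is not competing with an internal argument but reconstructing the cited one. It does so faithfully: \cite{CDFLS2011} indeed verifies the hypotheses of the abstract theory of \cite{AGS2005} for the interaction energy \eqref{eq:int_energ} --- properness and lower semicontinuity from the Lipschitz bound $|W(x-y)|\leq \|W\|_{\Lip}(|x|+|y|)$, convexity along generalized geodesics from \ref{cond:convex} via exactly the McCann-type computation you describe, existence via minimizing movements with the $AC^\hf_\loc$ regularity coming from the energy-dissipation estimate, and uniqueness from the resulting evolution variational inequality. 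Two remarks are in order. First, a quantitative detail: the displacement-convexity computation transfers \ref{cond:convex} to the functional with a doubled constant, since $\bigl|(x-y)-(T(x)-T(y))\bigr|^2 \leq 2|x-T(x)|^2 + 2|y-T(y)|^2$, so $\W$ is $(-2\lambda)$-convex along generalized geodesics and the stability estimate reads $d_2(\rho_1(t),\rho_2(t)) \leq e^{2\lambda t}\,d_2(\rho_1^0,\rho_2^0)$ rather than $e^{\lambda t}$; this is immaterial for uniqueness. Second, and more substantively, the step you yourself single out as ``the genuine obstacle'' --- that for a pointy potential the element of minimal norm in $\partial\W(\rho)$ is precisely $\partial^0 W \ast \rho$ with the convention $\partial^0 W(0)=0$, so that the abstract velocity coincides with \eqref{eq:min_nrm_velocity} --- is resolved in your write-up only by appealing to \cite{CDFLS2011} itself. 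Since that subdifferential characterization is the main technical content of the cited theorem, your proposal is a correct roadmap of the known proof rather than an independent one; given that the paper also treats the result as external, that is an appropriate level of detail, but one should be clear that the hardest step has been deferred, not proved.
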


Let us connect this notion of solution to more classical concepts of weak solutions for PDEs.

\begin{definition}\label{def:weaksol}
A locally in time absolutely continuous in $d_p$ curve $\rho:[0,+\infty)\to \Prob_p(\R^d)$, $1\leq p<\infty$ is said to be a \emph{$d_p$-weak measure solution}
to \eqref{eq:agg_eq} with initial datum $\rho^0\in\Prob_p(\R^d)$ if and
only if $\partial^0 W\ast \rho \in L^1_{\loc}((0,+\infty);L^2(\rho(t)))$
and
\begin{equation}\label{eq:dpsol}
\begin{split}
\int_0^{+\infty}\!\!\!  \int_{\R^d}\frac{\partial\varphi}{\partial
t}(x,t) &\,\rho (dx,t)\,dt + \int_{\R^d} \varphi(x,0) \,\rho^0 (dx)
\\
&=\int_0^{+\infty}\!\!\!  \int_{\R^d} \int_{\R^d}
\nabla\varphi(x,t)\cdot\partial^0 W(x-y) \, \rho(dy,t)\,
\rho(dx,t)\,dt, 
\end{split}
\end{equation}
for all test functions $\varphi\in C^\infty_c ([0,+\infty)\times
\R^d)$.
\end{definition}

In \cite{CDFLS2011} it is proven that the concept of gradient flow solutions to \eqref{eq:agg_eq} under the assumptions \ref{cond:lip}--\ref{cond:convex} is in fact equivalent to the concept of $d_2$-weak measure solutions. As a consequence, the uniqueness of gradient flow solutions imply the uniqueness of $d_2$-weak measure solutions, see \cite[Section 2.3]{CDFLS2011}.

Observe that $d_1$-weak measure solutions to \eqref{eq:agg_eq} are also $d_2$-weak measure solutions to \eqref{eq:agg_eq}. Indeed, since $\|v(t)\|_{L^2(\rho)} \in L^2_{\loc}((0, +\infty))$, we can apply \cite[Theorem 8.3.1]{AGS2005} which implies the absolute continuity with respect to $d_2$ of the curve of probability measures $\rho(t)$. This fact will be the key to identifying the limit of the numerical schemes below.

The notion of gradient flow solutions has been proven to be equivalent to the notion of duality solutions in one dimension \cite{JV2016}, the Fillipov flow solutions \cite{CJLV16}, and, as mentioned in the introduction, it is equivalent to the notion of entropy solutions of the one-dimensional Burgers' equation in the particular case of $W(x)=\pm|x|$, see \cite{BCFP2015}. 

Let us finally mention that global existence of measure valued solutions in one dimension to \eqref{eq:agg_eq} with $\partial_x W \ast \rho $ replaced by $a(\partial_x W \ast \rho)$, where $a$ is a $C^1$ function, was obtained by James and Vauchelet \cite{JV2016} using the notion of duality solutions, introduced by Bouchut and James \cite{BoJa1998}.


\section{A numerical scheme for the 1D aggregation equation}
\noindent Based on the relation 
\begin{equation}\label{eq:urhorelation}
u(x,t)=\rho((-\infty,x],t)
\end{equation}
between solutions to the one-dimensional aggregation equation \eqref{eq:agg_eq_1d} and Burgers' equation \eqref{eq:burgers} in the case $W(x)=\pm |x|$, we will derive a (formally) second-order accurate method for the aggregation model \eqref{eq:agg_eq_1d}. As $\rho$ is assumed to be a probability measure in $x$, we can assume that $u$ will be a nondecreasing function satisfying $u(-\infty,t) = 0$ and $u(+\infty,t)=1$. We will later generalize the resulting method to general potentials and multiple dimensions. 

\subsection{Second-order schemes for Burgers' equation}
We discretize the space-time domain $\R\times\R_+$ as $x_\imhf = (\imhf)\Dx$ and $t^n = n\Dt$ for $i\in\Z$ and $n\in\N_0$, where $\Dx, \Dt>0$ are the discretization parameters. We define also the computational cell $\cell_i:=[x_\imhf,x_\iphf)$. A finite volume approximation of \eqref{eq:burgers} aims to approximate the cell averages 
$$
u_i^n \approx \frac{1}{\Dx}\int_{\cell_i} u(x,t^n)\,dx\,.
$$
Such schemes are generally first-order accurate, and a popular method of increasing the order of accuracy is by \textit{reconstruction}: Given cell averages $u_i^n$, compute a piecewise linear polynomial
\begin{equation*}
\recon u_\Dx(x,t^n) = u_i^n + \sigma_i^n(x-x_i), \qquad x \in \cell_i
\end{equation*}
(see e.g.\ \cite{GR91,LeVeque}). The slopes $\sigma_i^n\in\R$ are selected using e.g.\ the minmod limiter, which for increasing data $u_i^n\leq u_{i+1}^n$ is given by $\sigma_i^n = \frac{1}{\Dx}\min \big(u_i^n-u_{i-1}^n, u_{i+1}^n-u_i^n\big)$. Defining the edge values $u_\iphf^{n,\pm} = \recon u_\Dx(x_\iphf \pm 0,t^n)$, a (formally) second-order accurate finite volume method for \eqref{eq:burgers} is given by
\begin{equation}\label{eq:num_burgers}
\begin{split}
u_i^{n+1} &= u_i^n - \beta \left(F\big(u_\iphf^{n,-},u_\iphf^{n,+}\big) - F\big(u_\imhf^{n,-},u_\imhf^{n,+}\big)\right), \qquad \beta := \frac{\Dt}{\Dx} \\
u_i^0 &= \frac{1}{\Dx}\int_{\cell_i}u^0(x)\,dx.
\end{split}
\end{equation}
Here, $F$ is any monotone numerical flux function, such as the Lax--Friedrichs-type flux
\begin{equation*}
F(u,v) = \frac{f(u) + f(v)}{2} - \frac{c}{2}(v-u), \qquad c=\max_i\big(|f'(u_i^n)|\big).
\end{equation*}
This numerical flux is chosen here for its simplicity, and is a Lax--Friedrichs-type flux where the usual constant $1/\beta$ is replaced by the maximum velocity $c$.

\subsection{Second-order schemes for the aggregation model}
In this section we transfer the above approach to the one-dimensional aggregation equation \eqref{eq:agg_eq_1d}, first for the Newtonian potential $W(x)=\pm|x|$ in Section \ref{sec:newtonpot} and then to more general potentials in Section \ref{sec:generalpot}.

\subsubsection{Newtonian potential}\label{sec:newtonpot}
Analogous to the relation \eqref{eq:urhorelation}, we define $\rho$ through the relation
\[
\rho_\iphf^n = \frac{u_{i+1}^n-u_i^n}{\Dx} \qquad\Leftrightarrow\qquad u_i^n = \sum_{j \leq i} \Dx \rho_\jmhf^n, \quad \rho_{-\infty} = 0.
\]
As a simplifying assumption, let us assume that the initial data for the conservation law \eqref{eq:burgers} has been sampled through point values, $u_i^0=u^0(x_i)$. For the initial data $\rho_\iphf^0$, this relation and the definition $u^0=\rho^0((-\infty,x])$ yield
\begin{align*}
\rho_{i+\hf}^0 &= \frac{1}{\Dx}\left(u^0(x_{i+1})-u^0(x_i)\right) = \frac{1}{\Dx} \rho^0((x_i,x_{i+1}]).
\end{align*}

Taking the difference in $i$ of \eqref{eq:num_burgers} yields the following numerical method for \eqref{eq:agg_eq_1d} in the case $W(x)=\pm|x|$:
\begin{equation}\label{eq:num_rho_burgers}
\begin{split}
\rho_\iphf^{n+1} = \rho_\iphf^n + \frac{\beta}{2}\Bigg[&\Dx \sum_{j\neq i} \pm\sgn\big(x_i-x_j\big)\Big(\rho_{j+1}^{n,+}\rho_{i+1}^{n,+} + \rho_{j+1}^{n,-}\rho_{i+1}^{n,-} \\
& - \rho_j^{n,+}\rho_i^{n,+} - \rho_j^{n,-}\rho_i^{n,-}\Big) + c^n\big(\rho_{i+1}^{n,+} - \rho_{i+1}^{n,-}\big) - c^n\big(\rho_i^{n,+} - \rho_i^{n,-}\big)\Bigg],
\end{split}
\end{equation}
where 
$$
c^n = \Dx \max_i\Big(\Big| \sum_{j\neq i} \pm \sgn (x_i-x_j)\rho_j^{n,-}\Big|, \Big|\sum_{j\neq i} \pm \sgn (x_i-x_j)\rho_j^{n,+}\Big|\Big)
$$
and
\begin{equation}\label{eq:edge_val}
\rho_i^{n,+} = \rho_\imhf^n - \frac{1}{2}\big(\sigma_{i+1}^n-\sigma_i^n\big), \qquad \rho_{i+1}^{n,-} = \rho_\iphf^n + \frac{1}{2}\big(\sigma_{i+1}^n-\sigma_i^n\big),
\end{equation}
with initial data given by
\begin{equation}\label{eq:num_init}
\rho_\iphf^0 = \frac{1}{\Dx}\rho^0((x_i,x_{i+1}]).
\end{equation}
Observe that
\[
\rho_i^{n,\pm} = \frac{u_\iphf^{n,\pm}-u_\imhf^{n,\pm}}{\Dx} \,.
\]
Now, notice that when $W(x)=\pm |x|$ we have $W'(x)=\pm \sgn(x)$ for all $x\neq0$, so that $\pm \sgn $ can be replaced by $W'$. Thus, the method \eqref{eq:num_rho_burgers} can be written as follows,
\begin{equation*}
\begin{split}
\rho_\iphf^{n+1} = \rho_\iphf^n + \frac{\beta}{2}\Big[&a_{i+1}^{n,+}\rho_{i+1}^{n,+} + a_{i+1}^{n,-}\rho_{i+1}^{n,-} - a_i^{n,+}\rho_i^{n,+} - a_i^{n,-}\rho_i^{n,-} \\
& \qquad \qquad + c^n\big(\rho_{i+1}^{n,+} - \rho_{i+1}^{n,-}\big) - c^n\big(\rho_i^{n,+} - \rho_i^{n,-}\big)\Big],
\end{split}
\end{equation*}
where
\begin{equation*}
\begin{split}
a_i^{n,+} = \Dx \sum_{j\neq i} W'\bigl(x_i-x_j\bigr)\rho_j^{n,+}, \qquad
a_i^{n,-} = \Dx \sum_{j\neq i} W'\bigl(x_i-x_j\bigr)\rho_j^{n,-}.
\end{split}
\end{equation*}

\begin{figure}
\centering
\subfigure[Cell averages of $u_\Dx$.]{
\includegraphics[trim={1.3cm 0 -1.3cm 0},clip, width=0.42\textwidth]{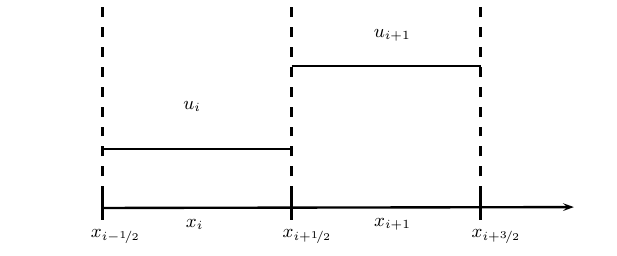}}
\subfigure[The reconstruction $\recon u_\Dx$.]{
\includegraphics[width=0.41\textwidth]{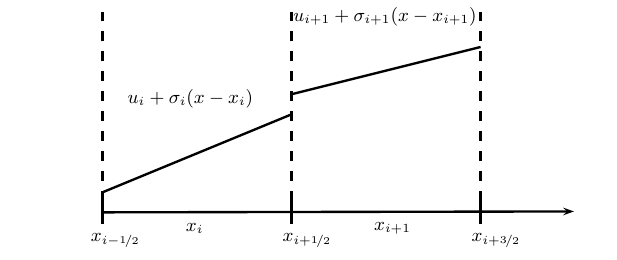}}
\subfigure[Point masses of $\rho_\Dx$.]{
\includegraphics[width=0.47\textwidth]{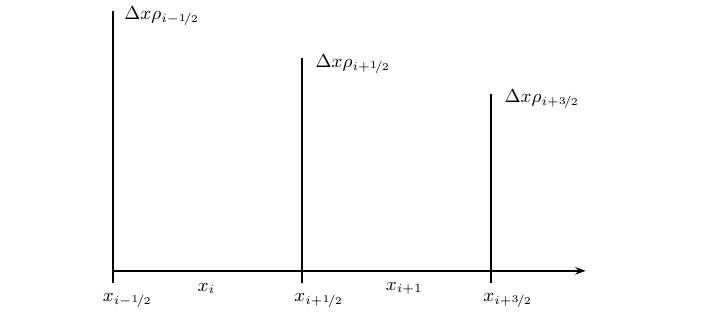}}
\subfigure[The reconstructed measure $r$ of $\rho_\Dx$, where $r([x_i,x_{i+1}))=\Dx\rho_\iphf$.]{\includegraphics[width=0.49\textwidth]{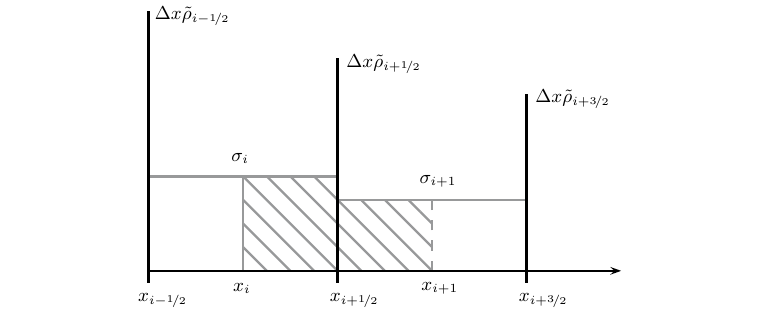}}
\caption{The reconstruction of $u_\Dx$ (top) translated into a reconstruction procedure for $\rho_\Dx$ (bottom). The solid, vertical lines represent Dirac measures centered at the midpoints $x_\iphf$.}
\label{fig:reconstruction}
\end{figure}  

\subsubsection{General potentials}\label{sec:generalpot}
Let $\rho_\iphf^0$ be as in \eqref{eq:num_init}. We realize the numerical approximation $\rho_\iphf^n$ as the measure
\begin{equation*}
\rho_\Dx (x,t^n) = \Dx\sum_i \rho_\iphf^n \delta_{x_\iphf}.
\end{equation*}
This function is {reconstructed} by defining a \emph{reconstructed measure} $r^n$ as
\begin{equation}\label{eq:rho_recon}
r^n = \sum_i \left[\Dx \tilde{\rho}_\iphf^n \delta_{x_\iphf} +  \sigma_i^n\mathcal{L}\big|_{\cell_i}\right], \qquad \tilde{\rho}_\iphf^n := \rho_\iphf^n -\frac{1}{2}(\sigma_i^n+\sigma_{i+1}^n)
\end{equation}
where $\sigma_i^n = \min \big(\rho_\imhf^n, \rho_\iphf^n\big)$ and $\mathcal{L}\big|_{A}$ denotes the Lebesgue measure restricted to the set $A$ (cf. Figure \ref{fig:reconstruction}). It is easy to check that the reconstruction preserves mass, in the sense
\[
r^n\big([x_i,x_{i+1})\big)=\Dx\rho_\iphf^n,
\]
and that $r^n$ is nonnegative. It follows that $r^n \in \Prob_1(\R)$ whenever $\rho_\Dx(t^n) \in \Prob_1(\R)$. Moreover, as the reconstruction procedure redistributes mass over a distance no greater than $\Dx$, we have
\begin{equation}\label{eq:reconstrcontinuous}
d_1\big(\rho_\Dx(t^n), r^n\big) \leq \Dx.
\end{equation}
We can now \textit{define} $\rho_i^{n,\pm}$ as taking information from $r^n$ in the downwind or upwind direction,
\begin{equation}\label{eq:edge_val2}
\begin{split}
\rho_i^{n,+} &= \frac{1}{\Dx}r^n\big((x_\imhf,x_\iphf]\big) = \rho_\iphf^n - \frac{1}{2}(\sigma_{i+1}^n-\sigma_i^n),\\
\rho_i^{n,-} &= \frac{1}{\Dx}r^n\big([x_\imhf,x_\iphf)\big) = \rho_\imhf^n + \frac{1}{2}(\sigma_i^n-\sigma_{i-1}^n)
\end{split}
\end{equation}
(compare with \eqref{eq:edge_val}). Furthermore, if we set
\begin{equation*}
\rho_\Dx^{n,+} = \sum_i \Dx \rho_i^{n,+}\delta_{x_i}, \qquad \rho_\Dx^{n,-} = \sum_i \Dx \rho_i^{n,-}\delta_{x_i}
\end{equation*}
then we find that
\begin{equation*}
\Dx \sum_{j\neq i} \pm \sgn \bigl(x_i-x_j\bigr)\rho_j^{n,\pm} = \partial_x^0 W \ast \rho_\Dx^{n,\pm}(x_i)
\end{equation*}
when $W(x)=\pm|x|$. We use the above expression to define the numerical velocities for general potentials $W$ as follows,
\begin{equation}\label{eq:velocities}
\begin{split}
a_i^{n,+} &= \partial_x^0 W \ast \rho_\Dx^{n,+}(x_i) = \Dx \sum_{j\neq i} W'\bigl(x_i-x_j\bigr)\rho_j^{n,+},\\
a_i^{n,-} &= \partial_x^0 W \ast \rho_\Dx^{n,-}(x_i) = \Dx \sum_{j\neq i} W'\bigl(x_i-x_j\bigr)\rho_j^{n,-}.
\end{split}
\end{equation}
Moreover, we can replace $W'$ in \eqref{eq:velocities} by a continuous and piecewise linear approximation $W_\Dx'$ satisfying $W_\Dx'(k\Dx)=W'(k\Dx)$ for all $k \neq 0$. This will be used in the upcoming convergence proof.

Summing up, with the reconstruction \eqref{eq:rho_recon} and the velocities \eqref{eq:velocities} we define the (formally) second-order accurate numerical scheme
\begin{subequations}\label{eq:numericalmethod_1d}
\begin{equation}\label{eq:scheme_1d}
\rho_\iphf^{n+1} = \rho_\iphf^n + \frac{\Dt}{\Dx}\big(J_{i+1}^n - J_i^n\big),
\end{equation}
where the fluxes are given by the Lax--Friedrichs-type flux formula
\begin{equation}\label{eq:diffusiondef}
J_i^n = \frac{a_i^{n,+}\rho_i^{n,+} + a_i^{n,-}\rho_i^{n,-}}{2} + \frac{c^n}{2}\big(\rho_i^{n,+} - \rho_i^{n,-}\big), \qquad c^n = \max_i |a_i^{n,\pm}|
\end{equation}
and $\rho_\iphf^0$ is defined in \eqref{eq:num_init}. We emphasize that this numerical scheme is well-defined for any potential $W$ satisfying assumptions \ref{cond:lip}, \ref{cond:c1}.

Notice that if we replace $J_i^n$ in \eqref{eq:diffusiondef} with the upwind flux 
\begin{equation}\label{eq:upwinddef}
J_i^n=\max\big(a_i^{n,+},0\big)\rho_i^{n,+} + \min\big(a_i^{n,-},0\big)\rho_i^{n,-}\,,
\end{equation}
\end{subequations}
then the scheme \eqref{eq:scheme_1d}, \eqref{eq:upwinddef} also defines a (formally) second-order accurate numerical scheme, and the upwinding flux formula \eqref{eq:upwinddef} is a perfect valid alternative to the Lax--Friedrichs-type flux \eqref{eq:diffusiondef}. 

From now on, we will refer to the numerical scheme \eqref{eq:numericalmethod_1d} meaning that we discuss either the numerical scheme \eqref{eq:scheme_1d}, \eqref{eq:diffusiondef} or \eqref{eq:scheme_1d}, \eqref{eq:upwinddef} indistinctively. We will only provide the proofs in the case of the Lax--Friedrichs-type flux \eqref{eq:diffusiondef}, but we emphasize that the upwind scheme shares the same stability and convergence properties as the Lax--Friedrichs method.

\begin{remark}
The numerical scheme \eqref{eq:numericalmethod_1d} is only (formally) first-order accurate in time. A higher-order integration in time, such as Heun's method or another Runge--Kutta method, is needed to make the scheme second-order in both time and space. See e.g.~\cite[Section 19.4]{LeVeque} for more details.
\end{remark}

\subsection{Properties of the scheme}\label{sec:schemeprop1d}
The properties of the scheme \eqref{eq:numericalmethod_1d} are similar to those of the first-order accurate schemes for \eqref{eq:agg_eq_1d} developed by James and Vauchelet \cite{JamesVauch2015, JamesVauch2016}. Define the linear time interpolation
\begin{equation}\label{eq:num_approx}
\rho_\Dx (t) := \frac{t^{n+1}-t}{\Dt}\rho_\Dx(t^n) + \frac{t-t^n}{\Dt}\rho_\Dx(t^{n+1}), \qquad t\in[t^n,t^{n+1})
\end{equation}
where $\rho_\Dx(t^n) = \Dx\sum_i \rho_\iphf^n\delta_{x_\iphf}$ and $\rho_\iphf^n$ is computed with the numerical scheme \eqref{eq:numericalmethod_1d}.

\begin{lemma}\label{lem:cfl}
Assume that $\rho^0 \in \Prob_1(\R)$ and that $W$ satsfies \ref{cond:lip}--\ref{cond:c1}. Assume moreover that $\beta := \Dt/\Dx$ satisfies the \emph{CFL condition}
\begin{equation}\label{eq:cfl}
\beta \leq \frac{1}{2\|W\|_{\Lip}}.
\end{equation}
Then for all $t\geq0$ and $n\in\N_0$:
\begin{itemize}
\item[(i)] \emph{Positivity/mass preservation:} $\rho_\Dx(t) \geq 0$ and $\int_\R\rho_\Dx(dx,t) = 1$,
\item[(ii)] \emph{Finite speed of propagation:} $c^n := \max_i \bigl|a_i^{n,\pm}\bigr| \leq \|W\|_\Lip$,
\item[(iii)] \emph{Bounded first order moment:}
\begin{equation}\label{eq:finitemoment}
\int_{\R} |x|\,\rho_\Dx(dx,t) \leq \int_{\R} |x|\,\rho^0(dx) + 2t\|W\|_\Lip, 
\end{equation}
\item[(iv)] \emph{Uniform tightness:} Let $r \geq 1$ and $\epsilon > 0$. Then
\begin{equation*}
\int_{\R \setminus [-r,r]} |x|\rho^0(dx) < \epsilon \quad \implies  \quad \int_{\R \setminus [-R,R]} |x|\rho_\Dx(dx, t) < \epsilon C(t),
\end{equation*}
where $R=r+t/\beta$ and $C(t) = \exp \left(\frac{3}{2} \|W\|_\Lip t \right)$.
\item[(v)] \emph{Preservation of the center of mass:}
$$
\int_{\R} x \,\rho_\Dx(dx, t) =
\int_{\R} x \,\rho_\Dx(dx,0)
$$
\item[(vi)] \emph{Time continuity:} The map $t \mapsto \rho_\Dx(t)$ is uniformly Lipschitz, in the sense that
\begin{equation}\label{eq:dtimecont}
d_1\big(\rho_\Dx(t), \rho_\Dx(s)\big) \leq 2\|W\|_{\Lip}|t-s|
\end{equation}
for all $t,s\geq0$, where $d_1$ denotes the Monge--Kantorovich--Rubinstein metric.
\item[(vii)] \emph{Bounded second order moment:} If in addition $\rho^0 \in \Prob_2(\R)$ then $\rho_\Dx(t)$ has bounded second order moment:
\begin{equation*}
\begin{split}
\int_\R |x|^2\,\rho_\Dx(dx,t) \leq \int_\R |x|^2\,\rho^0(dx) + 6t\|W\|_\Lip \int_\R |x|\,\rho^0(dx) + 12 t^2 \|W\|_\Lip^2, 
\end{split}
\end{equation*}
\end{itemize}
\end{lemma}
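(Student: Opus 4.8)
The plan is to establish each of the seven properties by working directly with the scheme \eqref{eq:numericalmethod_1d}, leveraging the CFL condition \eqref{eq:cfl} to control signs and magnitudes. I would begin with \textbf{(i)}, since mass preservation follows immediately from the conservative (flux-difference) form \eqref{eq:scheme_1d} upon summing over $i$ (telescoping), while positivity is the delicate part. To prove $\rho_\iphf^{n+1}\geq 0$, I would write $\rho_\iphf^{n+1}$ as a linear combination of the edge values $\rho_i^{n,\pm}$ and $\rho_{i+1}^{n,\pm}$, and show that under \eqref{eq:cfl} the coefficients are nonnegative. The key auxiliary fact is that the edge values are themselves nonnegative: from \eqref{eq:edge_val2} and the choice $\sigma_i^n=\min(\rho_\imhf^n,\rho_\iphf^n)$, the reconstructed measure $r^n$ is nonnegative (as already noted in the text after \eqref{eq:rho_recon}), so $\rho_i^{n,\pm}=\tfrac{1}{\Dx}r^n(\cdots)\geq0$. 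Then bounding $|a_i^{n,\pm}|\leq\|W\|_\Lip$ (which is property \textbf{(ii)}) and $c^n=\max_i|a_i^{n,\pm}|\leq\|W\|_\Lip$ gives, through the Lax--Friedrichs structure \eqref{eq:diffusiondef}, coefficients of the form $\tfrac{\beta}{2}(c^n\pm a_i^{n,\pm})\geq0$, exactly the standard monotonicity argument for Lax--Friedrichs fluxes. Property \textbf{(ii)} I would dispatch first, as it is just the estimate $|a_i^{n,\pm}|=|\Dx\sum_{j\neq i}W'(x_i-x_j)\rho_j^{n,\pm}|\leq\|W\|_\Lip\cdot\Dx\sum_j\rho_j^{n,\pm}=\|W\|_\Lip$, using $|W'|\leq\|W\|_\Lip$ and total mass one.

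For the moment bounds \textbf{(iii)} and \textbf{(vii)}, the natural approach is a discrete Grönwall-type estimate. I would compute $\sum_i|x_\iphf|\,\Dx\rho_\iphf^{n+1}$ using \eqref{eq:scheme_1d}, perform summation by parts to move the flux difference onto $|x_\iphf|$, and bound the resulting discrete ``velocity times mass'' terms using $|J_i^n|\lesssim\|W\|_\Lip\rho$-weights and $|x_\iphf-x_\imhf|=\Dx$. The increment per timestep should be $\leq 2\Dt\|W\|_\Lip$, since mass moves at speed at most $\|W\|_\Lip$ and the factor $2$ absorbs the symmetric contribution from both fluxes; iterating over $n$ with $t^n=n\Dt$ gives the linear-in-$t$ bound \eqref{eq:finitemoment}. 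For \textbf{(vii)} the same summation-by-parts produces cross terms $|x_\iphf|\cdot(\text{velocity})$, yielding a recursion of the form $M_2^{n+1}\leq M_2^n+C\Dt\|W\|_\Lip M_1^n+C\Dt^2\|W\|_\Lip^2$; inserting the bound on $M_1^n$ from \eqref{eq:finitemoment} and summing the telescoped geometric/arithmetic series produces the stated quadratic-in-$t$ expression. Property \textbf{(iv)} is a refinement of the same computation restricted to $\R\setminus[-R,R]$: mass starting inside $[-r,r]$ cannot reach outside $[-R,R]$ with $R=r+t/\beta$ because of finite speed of propagation (using that $\rho$ moves at most $\Dx/\Dt=1/\beta$ per step in the interpolant, or more precisely speed $\leq\|W\|_\Lip$ combined with the CFL relation), and the tail mass obeys a Grönwall inequality whose constant exponentiates to $C(t)=\exp(\tfrac{3}{2}\|W\|_\Lip t)$.

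Property \textbf{(v)}, preservation of the center of mass, follows from the antisymmetry $W'(x)=-W'(-x)$ (a consequence of $W(\bx)=W(-\bx)$ in \ref{cond:lip}), which makes the total momentum flux vanish. Concretely, I would compute $\tfrac{d}{dt}\sum_i x_\iphf\Dx\rho_\iphf$ via summation by parts and show that the resulting double sum $\sum_{i,j}W'(x_i-x_j)\rho_i\rho_j$ is zero by swapping the roles of $i$ and $j$ and invoking oddness of $W'$; the Lax--Friedrichs diffusion term also cancels in the momentum balance because it is a pure flux difference contributing a telescoping sum that integrates against the linear weight $x$ to give only boundary terms that vanish for compactly-carried (or tight) data. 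Finally, property \textbf{(vi)}, the time-Lipschitz bound in $d_1$, I would prove by the Kantorovich--Rubinstein dual formulation: for the interpolant \eqref{eq:num_approx} it suffices to bound $d_1(\rho_\Dx(t^{n+1}),\rho_\Dx(t^n))$, and for this I test against $\|\phi\|_\Lip\leq1$, use \eqref{eq:scheme_1d}, summation by parts to land a difference quotient of $\phi$ (bounded by $1$) against the flux, and bound the flux by $\|W\|_\Lip$ times total mass; the factor $2$ again comes from the symmetric velocity contributions, giving $d_1\leq 2\|W\|_\Lip\Dt$ per step and hence \eqref{eq:dtimecont} for the piecewise-linear interpolant.

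I expect the \textbf{main obstacle} to be property \textbf{(i)} — specifically, verifying nonnegativity of the \emph{reconstructed} edge values $\rho_i^{n,\pm}$ together with the sign of the scheme coefficients \emph{simultaneously}. The subtlety is that the second-order reconstruction \eqref{eq:rho_recon}–\eqref{eq:edge_val2} redistributes mass using the limited slopes $\sigma_i^n=\min(\rho_\imhf^n,\rho_\iphf^n)$, and one must check that these slopes never overshoot so as to create negative edge values, and that the CFL constant $\tfrac{1}{2\|W\|_\Lip}$ (rather than the usual $\tfrac{1}{\|W\|_\Lip}$, the extra factor $2$ coming from the Lax--Friedrichs averaging combined with the reconstruction) is exactly what is needed to keep all coefficients nonnegative. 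Getting this interplay between the minmod-type limiter and the velocity bound tight is where the careful bookkeeping lies; once positivity is secured, the remaining estimates \textbf{(ii)}--\textbf{(vii)} are comparatively routine Grönwall and summation-by-parts arguments.
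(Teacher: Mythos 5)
Your proposal follows essentially the same route as the paper's proof: a joint induction in $n$ in which nonnegativity of the reconstructed edge values (from $r^n\geq 0$) gives the velocity bound \textit{(ii)}, and the scheme is rewritten as a nonnegative combination of the edge values $\rho_i^{n,\pm},\rho_{i+1}^{n,\pm}$ under the CFL condition to get \textit{(i)}; summation by parts with a per-step increment of $2\Dt\|W\|_\Lip$ for \textit{(iii)}, \textit{(vi)} and \textit{(vii)}; a Gr\"onwall-type tail iteration for \textit{(iv)}; and the antisymmetry index swap of $W'$ (with the telescoping cancellation of the $c^n$-diffusion term) for \textit{(v)}. This matches the paper's argument in both structure and all key estimates, so there is nothing further to add.
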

\begin{proof}
From the definition \eqref{eq:num_approx} of $\rho_\Dx(t)$ it is clear that we only need to check each property at the discrete times $t=t^n$.

\textit{(i) and (ii):} The property \textit{(i)} clearly holds for $n=0$. Assume that \textit{(i)} holds for some $n\in\N_0$. By the definition \eqref{eq:edge_val2} we have $\rho_j^{n,\pm}\geq0$ for all $j$. It then follows that the velocity $a_i^{n,\pm}$ is bounded:
\begin{align*}
\bigl|a_i^{n,\pm}\bigr| &= \bigl|\Dx \sum_{j\neq i} W'\bigl(x_i-x_j\bigr)\rho_j^{n,\pm} \bigr| \leq \|W\|_{\Lip} \Dx \sum_j \rho_j^{n,\pm} \\
&= \|W\|_{\Lip} \Dx \sum_j \Big( \rho_j^{n} \pm \frac{1}{2}(\sigma_{j+1}^n-\sigma_j^n) \Big)\\
&=\|W\|_{\Lip} \Dx \sum_j \rho_j^{n}=\|W\|_{\Lip}.
\end{align*}
Using the fact that $\rho_\iphf^n = \frac{1}{2}(\rho_i^{n,+}+\rho_{i+1}^{n,-})$, the scheme \eqref{eq:scheme_1d} can be rewritten as  
\begin{align*}
\rho_\iphf^{n+1} = \frac{1-\beta\big(c^n-a_{i+1}^{n,-}\big)}{2}\rho_{i+1}^{n,-} & + \frac{1-\beta\big(c^n+a_i^{n,+}\big)}{2}\rho_i^{n,+}\\
& + \frac{\beta}{2}\big(c^n+a_{i+1}^{n,+}\big)\rho_{i+1}^{n,+} + \frac{\beta}{2}\big(c^n-a_i^{n,-}\big)\rho_i^{n,-}.
\end{align*}
By the induction hypothesis, the definition \eqref{eq:diffusiondef} of $c^n$ and the CFL condition \eqref{eq:cfl}, we infer that $\rho_\iphf^{n+1} \geq 0$. Summing the conservative numerical method \eqref{eq:scheme_1d} over all $i\in\Z$ and using the definition \eqref{eq:num_init} of the initial data yields
\begin{align*}
\sum_i \Dx \rho_\iphf^{n+1}= \sum_i \Dx \rho_\iphf^n= \sum_i \Dx \rho_\iphf^0=1.
\end{align*}

\textit{(iii):} Assume that $\rho_\Dx(t^n)$ satisfies \eqref{eq:finitemoment}. From \eqref{eq:scheme_1d} and summation by parts, the first order moment can be written as
\begin{align}
\Dx \sum_i |x_\iphf|\rho_\iphf^{n+1} = &\ \Dx\sum_i |x_\iphf| \rho_\iphf^n \nonumber\\
 - &\Dx \frac{\beta}{2}\sum_i (a_i^{n,+}+c^n)\rho_i^{n,+}\left(|x_\iphf|-|x_\imhf| \right) \\ 
 - &\Dx \frac{\beta}{2}\sum_i (a_i^{n,-}-c^n)\rho_i^{n,-}\left(|x_\iphf|-|x_\imhf| \right) \nonumber\\
+&\lim_{i\to \infty }\Dx\frac{\beta}{2}|x_\iphf|\left(a_i^{n,+}\rho_i^{n,+} + a_i^{n,-}\rho_i^{n,-} + c^n\left( \rho_i^{n,+}+ \rho_i^{n,-}\right) \right) \nonumber\\
 -&\lim_{i\to -\infty }\Dx\frac{\beta}{2}|x_\iphf|\left(a_i^{n,+}\rho_i^{n,+} + a_i^{n,-}\rho_i^{n,-} + c^n\left( \rho_i^{n,+}+ \rho_i^{n,-}\right) \right). \label{aux}
\end{align}
The last two terms vanish because $a_i^{n,\pm}$ satisfies \textit{(ii)} and $\rho_\Dx^{\pm}(t^n) \leq \frac{3}{2} \rho_\Dx(t^n)$, where $\rho_\Dx(t^n) \in \Prob_1(\R)$ by the induction hypothesis. From the bound $\big||x_\iphf|-|x_\imhf|\big| \leq \Dx$, \eqref{eq:cfl}, \textit{(ii)} and the induction hypothesis, we get
\begin{align*}
\Dx \sum_i |x_\iphf|\rho_\iphf^{n+1} &\leq \Dx \sum_i |x_\iphf| \rho_\iphf^n \\
& \quad + \Dx \frac{\beta\Dx}{2}\sum_i \left(\big|c^n+a_i^{n,+}\big|\rho_i^{n,+} + \big|c^n-a_i^{n,-}\big|\rho_i^{n,-} \right) \\
&\leq \Dx \sum_i |x_\iphf| \rho_\iphf^n + \|W\|_\Lip\Dt\Dx\sum_i \left(\rho_i^{n,+} + \rho_i^{n,-} \right) \\
&= \Dx \sum_i |x_\iphf| \rho_\iphf^n + 2\Dt \|W\|_\Lip \\
&\leq \Dx \sum_i |x_\iphf| \rho_\iphf^0 + 2t^{n+1} \|W\|_\Lip.
\end{align*}

\textit{(iv):} We consider $x > R$. The case $ x < -R$ is similar. Let $k \in \Z$ be such that $R\in\cell_k$. Then, from a summation by parts,
\begin{align*}
\quad \, & \int_{x > R} |x| \rho_\Dx(dx,t^{n+1}) \\
&= \Dx\sum_{i\geq k} x_{i+\hf} \rho_{i+\hf}^{n+1} \\
&= \Dx\sum_{i\geq k} x_{i+\hf} \rho_{i+\hf}^{n} - \Dt \sum_{i\geq k} \left(x_\iphf-x_\imhf \right)J_i^n - \Dt x_{k-\hf}J_k^n \\
&= \Dx\sum_{i\geq k} x_{i+\hf} \rho_{i+\hf}^{n} - \Dt\Dx \sum_{i\geq k} J_i^n - \Dt x_{k-\hf}J_k^n \\
&\leq \Dx\sum_{i\geq k} x_{i+\hf} \rho_{i+\hf}^{n} - \Dt \Dx\sum_{i\geq k} \frac{1}{2}\big(a_{i}^{n,-} - c^n\big)\rho_{i}^{n,-} - \Dt x_{k-\hf}\frac{1}{2}\big(a_{k}^{n,-} - c^n\big)\rho_{k}^{n,-} \\
&\leq \Dx\sum_{i\geq k} x_{i+\hf} \rho_{i+\hf}^{n} + \Dt \Dx\frac{3}{2}\|W \|_\Lip \sum_{i\geq k}\rho_\iphf^n + \Dx x_{k-\hf}\frac{3}{4}\rho_{k-\hf}^{n} \\
&\leq \left(1+\frac{3}{2}\|W\|_\Lip \Dt \right) \Dx\sum_{i\geq k-1} x_{i+\hf} \rho_{i+\hf}^{n} \\
&\leq \dots 
\leq \left(1+\frac{3}{2}\|W\|_\Lip \Dt \right)^n \Dx\sum_{i\geq k-n-1} x_{i+\hf} \rho_{i+\hf}^{0}
\end{align*} 
where we have used $J_i^n \geq \frac{1}{2}(a_i^{n,-} - c^n)\rho_i^{n,-}$ in the first inequality, and $\rho_i^{n,-} \leq 3/2 \rho_{i-\hf}^n$, \eqref{eq:cfl} and \textit{(ii)} in the second. As long as $r \geq 1$, the third inequality follows.

\textit{(v):} The proof is based on the antisymmetry of $W'(x)$. Similar to \eqref{aux}, it is easy to check that \textit{(v)} is equivalent to showing that
$$
\sum_i \left(a_i^{n,+}\rho_i^{n,+} + a_i^{n,-}\rho_i^{n,-} + c^n\left( \rho_i^{n,+}- \rho_i^{n,-}\right) \right)\left(x_\iphf-x_\imhf \right)=0\, \qquad \text{for all } n\in\N.
$$
Since $c^n$ does not depend on $i$, $x_\iphf-x_\imhf=\Delta x$, and taking into account the formulas for $\rho_i^{n,\pm}$ in \eqref{eq:edge_val2}, we deduce that the last statement is equivalent to
$$
\sum_i \left( a_i^{n,+}\rho_i^{n,+} + a_i^{n,-}\rho_i^{n,-} \right)=0\, \qquad \text{for all } n\in\N.
$$
Finally, we have due to the antisymmetry of $W'(x)$ that
\begin{align*}
\sum_i a_i^{n,\pm}\rho_i^{n,\pm} = & \sum_{i\neq j} W'(x_i-x_j) \rho_i^{n,\pm} \rho_j^{n,\pm} \\
= & \sum_{i\neq j} W'(x_j-x_i) \rho_i^{n,\pm} \rho_j^{n,\pm}=- \sum_{i\neq j} W'(x_i-x_j) \rho_i^{n,\pm} \rho_j^{n,\pm},
\end{align*}
leading to
$$
\sum_i a_i^{n,\pm}\rho_i^{n,\pm}=0\, \qquad \text{for all } n\in\N.
$$

\textit{(vi):} The proof is similar to \textit{(iii)}: Multiplying \eqref{eq:scheme_1d} by $\phi_\iphf = \phi(x_\iphf)$ for a Lipschitz continuous function $\phi$ satisfying $\|\phi \|_\Lip \leq 1$ gives
\begin{align*}
\Dx \sum_i & \big(\rho_\iphf^{n+1}-\rho_\iphf^n\big)\phi_\iphf \\
&= \Dx \frac{\beta}{2}\sum_i \left(\big|c+a_i^{n,+}\big|\rho_i^{n,+} + \big|c-a_i^{n,-}\big|\rho_i^{n,-} \right)\big(\phi_\iphf-\phi_\imhf\big) \\
&\leq \frac{\beta}{2}2c^n\Dx^2\|\phi\|_{\Lip} \sum_i \left(\rho_i^{n,+} + \rho_i^{n,-}\right) \\
&\leq 2\|\phi\|_{\Lip}\|W\|_\Lip\Dt,
\end{align*}
and taking the supremum over all $\phi$ with $\|\phi \|_\Lip \leq 1$ yields \eqref{eq:dtimecont} with $t=t^{n+1}$ and $s=t^n$. Iterating over timesteps yields \eqref{eq:dtimecont} for any discrete times $t^n,t^m$, $m,n \in \N$. The inequality \eqref{eq:dtimecont} for any $t,s \in \R_+$ follows from \eqref{eq:num_approx}.

\textit{(vii):} The proof follows similarly to \textit{(iii)}.
\end{proof}

\begin{remark}
Replacing $J_i^n = \frac{1}{2}\left(a_i^{n,+}\rho_i^{n,+} + a_i^{n,-}\rho_i^{n,-} + c^n(\rho_i^{n,+}- \rho_i^{n,-})\right)$ in \eqref{eq:diffusiondef} with the upwind flux \eqref{eq:upwinddef} in the above proof, we can easily deduce the same properties under the same CFL condition. 
\end{remark}

\subsection{Convergence of the method}\label{sec:convergence1d}
Using the properties derived in the previous section we can now prove convergence of the method using a standard compactness technique.

\begin{theorem}\label{thrm:main}
Let $\rho^0 \in \Prob_1(\R)$, assume that $W$ satisfies properties \ref{cond:lip} and \ref{cond:c1}, and that the CFL condition \eqref{eq:cfl} is satisfied. Then for any $T>0$, the numerical approximation \eqref{eq:num_approx} has a uniformly convergent subsequence,
\begin{equation}\label{eq:schemeconverges}
\sup_{t\in[0,T]} d_1\big(\rho_{\Dx'}(t),\rho(t)\big) \to 0 \qquad \text{as } \Dx'\to0,
\end{equation}
and the limit $\rho$ is a $d_1$-weak measure solution of \eqref{eq:agg_eq_1d}, \eqref{eq:min_nrm_velocity} which satisfies
\begin{equation}\label{eq:rholipschitz}
d_1\big(\rho(t),\rho(s)\big) \leq 2\|W\|_{\Lip(\R)}|t-s| \qquad \forall\ t,s\in\R_+.
\end{equation}
If $W$ also fulfills \ref{cond:convex} and $\rho^0 \in \Prob_2(\R)$ then the whole sequence $\rho_\Dx$ converges, and the limit $\rho$ is the unique gradient flow solution of \eqref{eq:agg_eq_1d}.
\end{theorem}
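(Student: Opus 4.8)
The plan is to follow the standard compactness-plus-consistency strategy for finite volume schemes, adapted here to the Wasserstein setting. First I would establish relative compactness of the family $\{\rho_\Dx\}$ in $C\big([0,T];(\Prob_1(\R),d_1)\big)$ by an Arzel\`a--Ascoli argument. For each fixed $t\in[0,T]$, properties (i), (iii) and (iv) of Lemma \ref{lem:cfl} show that $\{\rho_\Dx(t)\}_\Dx$ is a tight family of probability measures with uniformly integrable first moments, hence relatively compact in $(\Prob_1(\R),d_1)$; moreover the bounds in (iii)--(iv) are uniform on $[0,T]$, so all these measures lie in a single $d_1$-compact set independent of $t$ and $\Dx$. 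The equi-Lipschitz time continuity (vi) supplies equicontinuity. A diagonal extraction over a countable dense set of times, upgraded to uniform convergence by (vi), then yields a subsequence $\rho_{\Dx'}\to\rho$ satisfying \eqref{eq:schemeconverges}, and the bound \eqref{eq:rholipschitz} passes to the limit directly from (vi) since $d_1$ is continuous under $d_1$-convergence.

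Next I would show that $\rho$ solves the weak formulation \eqref{eq:dpsol}. Fix $\varphi\in C_c^\infty([0,\infty)\times\R)$, multiply the scheme \eqref{eq:scheme_1d} by $\Dx\,\varphi(x_\iphf,t^n)$, and sum over $i$ and $n$. Discrete summation by parts in time reproduces, in the limit, the terms $\int_0^\infty\!\!\int\partial_t\varphi\,\rho\,dt+\int\varphi(\cdot,0)\,\rho^0$, while summation by parts in space converts the flux difference into a term pairing the numerical velocities $a_i^{n,\pm}$ from \eqref{eq:velocities} against discrete gradients of $\varphi$. Using \eqref{eq:reconstrcontinuous} to replace the edge reconstructions $\rho_\Dx^{n,\pm}$ by $\rho_\Dx(t^n)$ up to an $O(\Dx)$ error, and the piecewise-linear kernel $W'_\Dx$ (which converges to $W'$ locally uniformly away from the origin), the flux contribution is expected to converge to $\int_0^\infty\!\!\int\int\nabla\varphi(x)\cdot\partial^0 W(x-y)\,\rho(dy)\rho(dx)\,dt$.

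The delicate point, and the main obstacle, is passing to the limit in this interaction integral: the kernel $\partial^0 W=W'$ is discontinuous at the origin, and since the limit $\rho$ may carry atoms the diagonal $\{x=y\}$ can have positive $\rho\otimes\rho$-mass, so the integrand $(x,y)\mapsto\nabla\varphi(x)\cdot\partial^0 W(x-y)$ is not $\rho\otimes\rho$-a.e.\ continuous. The remedy is to exploit the antisymmetry of $W'$ implied by \ref{cond:lip} and symmetrize,
\[
\int\!\!\int \nabla\varphi(x)\cdot\partial^0 W(x-y)\,\rho(dy)\rho(dx)=\frac12\int\!\!\int\big(\nabla\varphi(x)-\nabla\varphi(y)\big)\cdot\partial^0 W(x-y)\,\rho(dy)\rho(dx).
\]
The symmetrized integrand is bounded and continuous on all of $\R\times\R$ (it vanishes on the diagonal because $\nabla\varphi$ is Lipschitz while $\partial^0 W$ is bounded), so it integrates stably against the weakly convergent product measures $\rho_{\Dx'}\otimes\rho_{\Dx'}\to\rho\otimes\rho$. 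I would carry out the same symmetrization at the discrete level to identify the summation-by-parts limit with this symmetric form, mirroring the antisymmetry computation used in part (v) of Lemma \ref{lem:cfl}. This identifies $\rho$ as a $d_1$-weak measure solution of \eqref{eq:agg_eq_1d}, \eqref{eq:min_nrm_velocity}.

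Finally, under the additional hypotheses \ref{cond:convex} and $\rho^0\in\Prob_2(\R)$, property (vii) yields uniform second-moment bounds, so $\rho(t)\in\Prob_2(\R)$ for all $t$. As noted in the preliminaries, a $d_1$-weak measure solution is automatically a $d_2$-weak measure solution via \cite[Theorem 8.3.1]{AGS2005}, and by \cite{CDFLS2011} the $d_2$-weak measure solutions coincide with gradient flow solutions, which are unique by Theorem \ref{thrm:unique}. Hence every $d_1$-convergent subsequential limit equals this single gradient flow solution, and a standard subsequence argument promotes subsequential convergence to convergence of the whole family $\rho_\Dx$.
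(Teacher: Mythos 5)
Your proposal is correct and follows essentially the same route as the paper's proof: compactness in $C([0,T];(\Prob_1(\R),d_1))$ via Lemma \ref{lem:cfl} \textit{(i)}, \textit{(iii)}, \textit{(iv)}, \textit{(vi)} and Ascoli, identification of the limit through the discrete weak formulation with summation by parts, and promotion to the unique gradient flow solution using \textit{(vii)}, \cite[Theorem 8.3.1]{AGS2005} and the uniqueness theory of \cite{CDFLS2011}. The only divergence is at the key stability step: where the paper passes to the limit in the nonlinear products $a^\pm_\Dx\rho^\pm_\Dx$ by invoking \cite[Lemma 3.1]{CJLV16}, you re-derive that fact directly via the antisymmetry/symmetrization argument (the symmetrized integrand being bounded, continuous and vanishing on the diagonal), which is precisely the mechanism underlying the cited lemma, so the two proofs coincide in substance.
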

\begin{proof}
Define the set
\[
K := \big\{\rho_\Dx(t)\ :\ \Dx>0,\ t\in[0,T] \big\},
\]
which by Lemma \ref{lem:cfl}~\textit{(i)} and \textit{(iii)} is a subset of $\Prob_1(\R)$ with uniformly bounded first moment. Hence, $K$ is tight, so by Prohorov's theorem $K$ is sequentially precompact in $\Prob(\R)$ with respect to the weak (or ``narrow'') topology (cf.~e.g.~\cite[Theorem 5.1.3]{AGS2005}). We claim that $K$ is also sequentially precompact with respect to $d_1$. By \cite[Theorem 7.12]{Villani}, all we need to check is that the first moments are uniformly integrable with respect to $K$. Fix $\epsilon>0$ and let $r>0$ be such that $\int_{\R\setminus[-r,r]}|x|\rho^0 (dx) < \epsilon$. By Lemma \ref{lem:cfl} \textit{(iv)}, we then have
\begin{align*}
\sup_{\rho\in K} \int_{\R\setminus[-R,R]} |x|\, \rho (dx) < \epsilon C(T)
\end{align*}
for some $R>0$, which proves our claim. Using the $2\|W\|_\Lip$-Lipschitz continuity of $\rho_\Dx$ (Lemma \ref{lem:cfl}~\textit{(vi)}), Ascoli's theorem now implies the existence of a subsequence of $\rho_\Dx$ (which we still denote as $\rho_\Dx$) and some $2\|W\|_\Lip$-Lipschitz continuous $\rho:[0,T]\to \Prob_1(\R)$ such that $d_1\big(\rho_\Dx(t), \rho(t)\big)\to0$ uniformly for $t\in[0,T]$.

We check that the limit $\rho$ satisfies \eqref{eq:agg_eq_1d} in the distributional sense. We multiply $\rho_\Dx^{n+1}$ with a test function $\phi\in C^2_c(\R)$, use \eqref{eq:scheme_1d} and perform a summation by parts,
\begin{equation}\label{eq:distcalc}
\begin{aligned}
\int_\R \phi(x) \rho_\Dx^{n+1}(dx) = &\ \Dx\sum_i \phi(x_\iphf) \rho_\iphf^n \\
&- \frac{\beta\Dx}{2}\sum_i \left(a_i^{n,+}\rho_i^{n,+} + a_i^{n,-}\rho_i^{n,-}\right)\left(\phi(x_\iphf)-\phi(x_\imhf)\right) \\ 
& - \frac{\beta\Dx}{2}\sum_i c^n(\rho_i^{n,+}- \rho_i^{n,-})\left(\phi(x_\iphf)-\phi(x_\imhf)\right).
\end{aligned}
\end{equation}
By Taylor expanding the last term in \eqref{eq:distcalc} around $x_\iphf$, summing by parts, and taking into account \eqref{eq:edge_val2}, Lemma \ref{lem:cfl} {\it (ii)}, and that the mass is conserved, we find that
\begin{align*}
& c^n\frac{\beta\Dx}{2}\sum_i \left(\rho_i^{n,+}- \rho_i^{n,-}\right)\left(\phi(x_\iphf)-\phi(x_\imhf) \right) \\*
=&\ c^n\frac{\beta\Dx}{2} \sum_i \left(\rho_\iphf^n - \rho_\imhf^n -\frac12 \left[ \sigma_i+\sigma_{i+1} -\sigma_{i-1}-\sigma_i\right]\right) \left(\phi(x_\iphf)-\phi(x_\imhf) \right) \\*
=&\ -c^n\frac{\beta\Dx}{2} \sum_i \underbrace{\Big(\rho_\iphf^n - \frac{\sigma_i+\sigma_{i+1}}{2}\Big)}_{\in [0,\rho_{i+1/2}^n]} \underbrace{\big(\phi(x_\ipthf)- 2\phi(x_\iphf)+\phi(x_\imhf)\big)}_{\leq \|\phi''\|_{L^\infty}\Dx^2} \\*
=&\ O(\Dx^2).
\end{align*}
We insert this into the expression \eqref{eq:distcalc}, and by a new Taylor expansion around $x_\iphf$, we know that there exists $y_i \in [x_\imhf, x_\iphf]$ such that 
\begin{align*}
\int_\R \phi(x) & \rho_\Dx^{n+1}(dx) \\
=&\ \Dx \sum_i \phi(x_\iphf) \rho_\iphf^n - \frac{\beta\Dx}{2}\sum_i \left(a_i^{n,+}\rho_i^{n,+} + a_i^{n,-}\rho_i^{n,-}\right)\phi'(x_i)\Dx \\
&- \frac{\beta\Dx}{2}\sum_i\left(a_i^{n,+}\rho_i^{n,+} + a_i^{n,-}\rho_i^{n,-}\right)\phi''(y_i)\frac{\Dx^2}{2} + O(\Dx^2) \\
=&\ \int_\R \phi(x)\,\rho_\Dx(dx,t^n) - \frac{\Dt}{2}\int_\R \phi'(x)a_\Dx^+(x,t^n)\,\rho_\Dx^+(dx,t^n) \\
&\qquad\qquad\qquad \,\,\,\,\, - \frac{\Dt}{2}\int_\R \phi'(x)a_\Dx^-(x,t^n)\,\rho_\Dx^-(dx,t^n) + O(\Dx^2)\,,
\end{align*}
where $a_\Dx^\pm(x,t)=\partial_x^0 W_\Dx \ast \rho_\Dx^\pm(x,t)$ and $\rho_\Dx^\pm$ is defined similar to $\rho_\Dx$, cf.~\eqref{eq:num_approx}. Then for any test function $\phi\in C^2_c(\R\times\R_+)$ we have
\begin{align*}
\int_\R \phi(x,t^n) & \frac{\rho_\Dx(dx,t^n+\Dt)-\rho_\Dx(dx,t^n)}{\Dt} \\
&\ \qquad \qquad \qquad = -\frac{1}{2}\int_\R \partial_x\phi(x,t^n)a_\Dx^{+}(x,t^n) \rho_\Dx^{+}(dx,t^n) \\
& \qquad \qquad \qquad \qquad - \frac{1}{2}\int_\R \partial_x\phi(x,t^n)a_\Dx^{-}(x,t^n)\rho_{\Dx}^{-}(dx,t^n) + O(\Dx).
\end{align*}
The fact that $\rho_{\Dx} \to \rho$, together with the stability property \eqref{eq:reconstrcontinuous} of the reconstruction procedure, implies that also $\rho_{\Dx}^\pm \to \rho$. Recall that $W_\Dx'$ is everywhere continuous. Then the stability result \cite[Lemma 3.1]{CJLV16} implies that $a^\pm_\Dx \rho^\pm_\Dx \wto \ \big(\partial^0 W \ast \rho\big)\rho$, where $\partial^0 W \ast \rho$ is defined in \eqref{eq:min_nrm_velocity}. A standard argument of summation by parts in time implies that $\rho$ is a distributional solution of \eqref{eq:agg_eq_1d} in the sense of \eqref{eq:dpsol}.

We have shown that $\rho$ is a distributional solution of a continuity equation of the form \eqref{eq:gradientfloweq} where the velocity field is given by  $v(t) = -\partial^0 W\ast \rho(t)$ for a.e.~$t > 0$. Furthermore, 
$\|v(t)\|_{L^2(\rho)} \in L^2_{\loc}(0, +\infty)$ since $|v(t,x)|\leq \|W\|_{\Lip}$ for a.e. $t>0$ and $x\in\R$. Finally, from \eqref{eq:rholipschitz} it follows that the continuous curve of probability measures $\rho(t)$ is absolutely continuous in time with respect to $d_1$, and we can thus conclude that $\rho$ is a $d_1$-weak measure solution according to Definition \ref{def:weaksol}.

If $\rho^0 \in \Prob_2(\R)$ then $\rho \in \Prob_2(\R)$ follows from Lemma \ref{lem:cfl}~\textit{(vii)}. Under the additional assumption \ref{cond:convex}, $d_2$-weak measure solutions as defined in \eqref{def:weaksol} are unique, see \cite[Section 2.3]{CDFLS2011}, and they coincide with the unique gradient flow solutions of \eqref{eq:agg_eq_1d} given by Theorem \ref{thrm:unique}. Thus, what remains to show to conclude that $\rho$ is the unique gradient flow solution, is that the $d_1$-weak measure solution $\rho(t)$ is locally in time absolutely continuous in $d_2$. As pointed out in Section 2, since $\|v(t)\|_{L^2(\rho)} \in L^2_{\loc}(0, +\infty)$, we can apply the properties of continuity equations in \cite[Theorem 8.3.1]{AGS2005} which imply the absolute continuity with respect to $d_2$ of $\rho(t)$.
\end{proof}

\begin{remark}
The repulsive potential $W(x)=-|x|$ does not satisfy \ref{cond:convex}. However, due to the equivalence in \cite{BCFP2015} we can apply the proof of Theorem \ref{thrm:main} to obtain the convergence of the numerical scheme also for this potential.
\end{remark}

\begin{remark}
Also from the equivalence in \cite{BCFP2015}, we can deduce from Theorem \ref{thrm:main} the convergence of the minmod scheme \eqref{eq:num_burgers} for Burgers' equation \eqref{eq:burgers} to the unique entropy solution whenever the initial data for Burgers' equation is nondecreasing. See \cite{KNR95} for further results in this direction.
\end{remark}

\begin{remark}
The scheme \eqref{eq:numericalmethod_1d} can be extended to the one-dimensional aggregation equation 
\begin{equation}\label{eq:agg_duality}
\partial_t\rho = \partial_x\bigl(a(W' * \rho)\rho \bigr),
\end{equation}
where $a$ is a nonlinear function. This can be done by carefully defining the velocities $a_i^{n,\pm}$ in \eqref{eq:numericalmethod_1d} using the reconstructed values $\rho_i^-,\rho_i^+$ as it is done for the first-order schemes in \cite{JamesVauch2015, JamesVauch2016} using $\rho_i,\rho_{i+1}$. The resulting scheme will satisfy the properties in Lemma \ref{lem:cfl} for suitable choices of initial data, function $a$ and CFL condition. Following the proof of Theorem \ref{thrm:main}, it will then be straightforward to prove that the resulting second-order numerical approximation converges to the unique duality solution of \eqref{eq:agg_duality} as introduced in \cite{JV2016}.
\end{remark}

\subsection{Truncation error}
Although a proof that our scheme converges at rate $O(\Dx^2)$ is currently out of reach, we can prove an $O(\Dx^2)$ truncation error estimate under the assumption that there exists a smooth solution. For the sake of simplicity we show this result only for the semi-discrete version of \eqref{eq:scheme_1d},
\begin{equation}\label{eq:semidiscrete}
\frac{d}{dt}\rho_{\iphf}(t) = \frac{J_{i+1}(\rho_\Dx(t))-J_i(\rho_\Dx(t))}{\Dx}
\end{equation}
where $J_i(\rho_\Dx(t))$ is given by \eqref{eq:diffusiondef}, with $\rho^n$ replaced by $\rho(t)$. In practice, the semi-discrete scheme \eqref{eq:semidiscrete} must be integrated in time using a second-order time integration method in order to preserve an overall second-order convergence rate. If \emph{strong stability preserving} Runge--Kutta methods are employed then all of the stability and convergence properties proved above are maintained by the fully discrete scheme (see e.g.~\cite{GST01}).
\begin{lemma}\label{lem:convrate}
Assume that the solution of \eqref{eq:agg_eq_1d} lies in $C^2_c(\R\times[0,T])$ for some $T>0$ and let $W \in C^3(\R\setminus \{0\})$ satisfy \ref{cond:lip}. Then the semi-discrete scheme \eqref{eq:semidiscrete} converges at a rate of $O(\Dx^2)$ when measured in $d_1$.
\end{lemma}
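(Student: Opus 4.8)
The plan is to route everything through the primitive variable, exactly as in the derivation of the scheme. Using the identity $d_1(\mu,\nu)=\|U_\mu-U_\nu\|_{L^1(\R)}$ with $U_\mu(x)=\mu((-\infty,x])$, I compare the reconstructed measure $r(t)$ built from $\rho_\Dx(t)$ via \eqref{eq:rho_recon} against the exact $\rho(t)$. Summing \eqref{eq:semidiscrete} in $i$ shows that the nodal primitive $u_i(t):=\Dx\sum_{j\le i}\rho_\jmhf(t)$ satisfies $\tfrac{d}{dt}u_i = J_i(\rho_\Dx(t))$, i.e.\ a semi-discrete second-order finite volume scheme for the nonlocal transport equation $\partial_t u = (W'\ast\rho)\,\partial_x u$. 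Since $\rho\in C^2_c$ gives $U_\rho\in C^3$, and since $U_r$ is piecewise linear with midpoint jumps of size $\Dx\tilde\rho_\iphf=O(\Dx^2)$ and nodal values $u_i$, one has $\|U_r(t)-U_\rho(t)\|_{L^1}\le C\sum_i\Dx\,|e_i(t)| + O(\Dx^2)$, where $e_i:=u_i-u(x_i,\cdot)$. Thus it suffices to bound the nodal error $E(t):=\sum_i\Dx\,|e_i(t)|$ by $O(\Dx^2)$ uniformly on $[0,T]$; the pure point-mass object $\rho_\Dx$ is only $O(\Dx)$-close in $d_1$, which is why the reconstruction is essential here.

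Next I would establish \emph{second-order consistency}. Feeding the exact cell data $\bar\rho_\iphf=\tfrac1\Dx\rho((x_i,x_{i+1}],t)$ into the flux and Taylor-expanding, the aim is $J_i(\bar\rho_\Dx)=(W'\ast\rho)(x_i,t)\,\rho(x_i,t)+O(\Dx^2)$, so that the truncation error $\tau_i:=\tfrac{d}{dt}u(x_i,t)-J_i(\bar\rho_\Dx)$ is $O(\Dx^2)$ uniformly in $t$. Three facts enter. First, the reconstruction \eqref{eq:edge_val2} is formally second order, so $\rho_i^\pm=\rho(x_i,t)+O(\Dx^2)$ and, crucially, the numerical-viscosity jump satisfies $\rho_i^+-\rho_i^-=O(\Dx^2)$, rendering the Lax--Friedrichs diffusion $\tfrac{c}{2}(\rho_i^+-\rho_i^-)$ in \eqref{eq:diffusiondef} harmless. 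Second, the discrete velocities \eqref{eq:velocities} form a midpoint quadrature of the convolution with the diagonal term $j=i$ omitted; although $W'$ is merely Lipschitz with a singularity at the origin, the symmetric omission of that term together with the antisymmetry $W'(-z)=-W'(z)$ cancels the leading near-diagonal error, while $W\in C^3(\R\setminus\{0\})$ legitimizes the one-sided expansions, giving $a_i^\pm=(W'\ast\rho)(x_i,t)+O(\Dx^2)$. Third, the leading truncation coefficient is a smooth function of $x$, so forming the flux difference $J_{i+1}-J_i$ does not forfeit the extra power of $\Dx$. These expansions are the most laborious part, but they are routine if carried out carefully.

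I would then set up the \emph{error equation} $\tfrac{d}{dt}e_i=\bigl(J_i(\rho_\Dx)-J_i(\bar\rho_\Dx)\bigr)-\tau_i$ and close it by Gronwall. Since the exact solution has compact support (and the scheme has finite speed of propagation, Lemma \ref{lem:cfl}\,\textit{(ii)}), only $O(1/\Dx)$ terms are nonzero, so $\sum_i\Dx|\tau_i|=O(\Dx^2)$. The remaining task is the \emph{stability bound} $\sum_i\Dx\,\sgn(e_i)\bigl(J_i(\rho_\Dx)-J_i(\bar\rho_\Dx)\bigr)\le C\,E(t)$, and this is the main obstacle: because $J_i$ depends on the densities $\rho_\jphf=(u_{j+1}-u_j)/\Dx$, a naive estimate produces the total variation $\sum_i|e_{i+1}-e_i|$, which is \emph{not} controlled by $E(t)$. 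Second-order schemes are not $\ell^1$-contractive in general, so I would exploit smoothness: linearizing $J_i$ about the exact solution, the leading operator is precisely the semi-discrete second-order scheme for the \emph{linear} transport equation $\partial_t u=b\,\partial_x u$ with the \emph{smooth} coefficient $b=W'\ast\rho\in C^2$ (convolution against $\rho\in C^2_c$ regularizes the singular kernel). For this linearized scheme the $\ell^1$-stability can be read off from its structure: the monotone Lax--Friedrichs base flux is $\ell^1$-contractive, the bounded-variation reconstruction is a Gronwall-controllable perturbation, and the nonlocal velocity has bounded total influence $\Dx\sum_{j}|W_\Dx'(x_i-x_j)|\le\|W\|_\Lip$. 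After summation by parts in the primitive variable, the relevant column sums of the Jacobian are $O(1)$, which is exactly what yields $\sum_i\Dx\,\sgn(e_i)(\cdots)\le C\,E(t)$. Gronwall then gives $E(t)\le e^{Ct}E(0)+O(\Dx^2)=O(\Dx^2)$ on $[0,T]$, and combined with the reduction above this yields $\sup_{t\in[0,T]}d_1(r(t),\rho(t))=O(\Dx^2)$.
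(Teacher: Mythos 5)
Your proposal is correct in substance and, once the change of variables is unwound, it is essentially the paper's own argument: an $O(\Dx^2)$ consistency estimate (second-order accuracy of the reconstructed values, the cancellation $\mu_i^+-\mu_i^-=O(\Dx^2)$ killing the numerical viscosity, midpoint quadrature legitimized by $W\in C^3(\R\setminus\{0\})$, and antisymmetry of $W'$ for the omitted diagonal cell), combined with a $d_1$-stability estimate and Gronwall. The genuine differences are ones of formulation. You work with primitives and the nodal error $E(t)=\sum_i\Dx|e_i(t)|$, whereas the paper tests the error measure against $1$-Lipschitz functions (Kantorovich duality); in one dimension these are identical, and your key mechanism --- summation by parts in the primitive variable against the Lipschitz coefficient $W'\ast\rho$, so that the $1/\Dx$ hidden in $\rho_i^\pm-\mu_i^\pm$ is absorbed by an $O(\Dx)$ difference of coefficients --- is precisely the paper's estimate in dual clothing: there, the terms $(a_i^+-c)(\rho_i^+-\mu_i^+)$ are bounded by $\|W'\ast\mu\|_\Lip\, d_1\bigl(\rho_\Dx^+,\proj\mu^+\bigr)$, the velocity differences $(b_i^+-a_i^+)\rho_i^+$ are summed by parts in $j$ using $\|W''\|_{L^\infty(\R\setminus\{0\})}$ plus the $2\|W\|_\Lip$ diagonal jump, and the bound $d_1\bigl(\rho_\Dx^\pm,\proj\mu^\pm\bigr)\le C\,d_1\bigl(\rho_\Dx,\proj\mu\bigr)$ (the paper's ``after carefully checking'') is the exact counterpart of your ``$O(1)$ column sums'' claim; it holds because $\min$ is $1$-Lipschitz, so the reconstruction moves primitives by bounded combinations of nearby nodal errors. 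Two of your structural choices are actually improvements in precision: you compare the reconstructed measure $r(t)$ to $\rho(t)$ and note the atomic $\rho_\Dx$ is only $O(\Dx)$-close (the paper compares $\rho_\Dx$ to the atomic projection $\proj\mu$, leaving implicit what $O(\Dx^2)$ means against the exact solution), and you run a single global Gronwall rather than a local truncation bound plus the ``standard'' local-to-global step, which itself requires the same stability estimate. One caution: two of your stated justifications are wrong as written --- $\ell^1$-contraction of monotone schemes does not apply to this nonlocal, reconstructed, primitive-variable scheme, and $\Dx\sum_j|W_\Dx'(x_i-x_j)|$ is of the order of the support length, not $\le\|W\|_\Lip$. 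Neither is load-bearing: the summation-by-parts/Lipschitz-coefficient argument you also invoke is the one that closes, provided ``Jacobian'' is replaced by Lipschitz estimates (the min-limiter is not differentiable).
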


\begin{proof}
As is standard in the error analysis of numerical methods for evolution equations, it is enough to show that the local truncation error is $O(\Dt\Dx^2)$ in order to show that the global error is $O(\Dx^2)$.

Let $\mu (x,t)$ be the gradient flow solution of \eqref{eq:agg_eq_1d}, and assume that $\mu$ is sufficiently smooth for $t \in [t^n, t^{n+1}]$. Define the projection $\proj\mu (t) = \Dx\sum_i \mu_\iphf(t)\delta_{x_\iphf}$ where $\mu_\iphf(t)=\mu(t,(x_i,x_{i+1}])/\Dx$. Let $\Dt>0$ be sufficiently small that the system of ODEs \eqref{eq:semidiscrete} with $\proj\mu (t^n)$ as initial data has a unique, bounded solution $\rho_\iphf(t)$ for $t\in[t^n,t^{n+1}]$. As before, denote $\rho_\Dx(t)=\sum_i \rho_\iphf(t)\delta_{x_\iphf}$. We will show that
\begin{equation}\label{key}
d_1\big(\rho_\Dx(t^{n+1}),\proj\mu(t^{n+1})\big) \leq C\Dt\Dx^2
\end{equation}
for some $C>0$ independent of $\Dx,\Dt$. Let $\phi:\R\to\R$ be a Lipschitz continuous function and denote $\phi_\iphf = \phi(x_\iphf)$. Integrating $\phi$ with respect to the error $\mathcal{E}^{n+1}:=\proj\mu (t^{n+1}) - \rho_\Dx(t^{n+1})$ yields
\begin{align*}
\ip{\mathcal{E}^{n+1}}{\phi} & = \int_\R \phi(x) \big( \proj\mu(t^{n+1}) - \rho_\Dx(t^{n+1})\big)(dx)\\
& = \Dx\sum_i \phi_\iphf \bigg[\frac{1}{\Dx}\int_{x_i}^{x_{i+1}}\mu(x,t^{n+1}) dx - \rho_\iphf(t^{n+1}) \bigg] \\
&=\Dx\sum_i \phi_\iphf \Bigg[\frac{1}{\Dx}\int_{x_i}^{x_{i+1}}\left(\mu(x,t^{n}) + \int_{t^n}^{t^{n+1}}\partial_t\mu(x,t)\,dt\right) \,dx \\ 
& \qquad \qquad \qquad \qquad \qquad \qquad- \left(\rho_\iphf(t^{n}) + \int_{t^n}^{t^{n+1}}\frac{d}{dt}\rho_\iphf(t)\,dt\right) \Bigg] \\
&=\Dx\sum_i \phi_\iphf \int_{t^n}^{t^{n+1}}\bigg[\frac{1}{\Dx}\int_{x_i}^{x_{i+1}}\partial_t\mu(x,t)\,dx - \frac{d}{dt}\rho_\iphf(t)\bigg]\,dt \\
&= \sum_i \phi_\iphf \int_{t^n}^{t^{n+1}}\bigg[\big(M_{i+1}(t)-M_i(t)\big)- \big(J_{i+1}(t)-J_i(t)\big) \bigg]\,dt 
\end{align*}
where $M(x,t):=\mu(W'\ast \mu)(x,t)$ and $M_i(t):=M(x_i,t)$. From a summation by parts, and suppressing the dependence on $t$ for the sake of notational simplicity, we find that
\begin{align}
\label{eq:errorexpression}
\ip{\mathcal{E}^{n+1}}{\phi} &= \sum_i  \big(\phi_\iphf-\phi_\imhf\big) \int_{t^n}^{t^{n+1}}\big(J_i(\rho_\Dx)-M_i\big)\,dt \notag\\
& = \sum_i  \big(\phi_\iphf - \phi_\imhf\big)\int_{t^n}^{t^{n+1}}\big[ J_i(\rho_\Dx) - J_i(\proj\mu) \big]\,dt \\
& \quad + \sum_i  \big(\phi_\iphf - \phi_\imhf\big)\int_{t^n}^{t^{n+1}}\bigg[\frac{a_i^{+}\mu_i^{+} + a_i^{-}\mu_i^{-}}{2} - M_i + \frac{c}{2}\big(\mu_i^{+} - \mu_i^{-}\big) \bigg]\,dt, \notag
\end{align}
after adding and subtracting $J_i(\proj\mu)$.
First, consider the last sum in \eqref{eq:errorexpression}.
Observe that $\sigma_i = \min\big(\mu_\imhf, \mu_\iphf\big)=\frac{1}{2}\big(\mu_\imhf + \mu_\iphf-|\mu_\iphf-\mu_\imhf|\big)$. After some tedious but easy computations, one can check that $\mu_i^{+}-\mu_i^{-} = O(\Dx^2)$. Furthermore,
\begin{align*}
M_i - a_i^{+}\mu_i^{+} 
&=\mu(x_i)\big((W'\ast \mu)(x_{i}) - a_i^{+} \big) + a_i^{+}\big(\mu(x_i)-\mu_i^{+}\big) \\ 
&=\mu(x_i)\big((W'\ast \mu)(x_{i}) - a_i^{+} \big) + O(\Dx^2),
\end{align*}
as $a_i^{+}$ is bounded and $\mu_i^{+}-\mu(x_i) = O(\Dx^2)$. We split the first term,
\begin{align*}
\big(W'\ast \mu\big)(x_{i}) - a_i^{+} &= \sum_{i \neq j} \bigg[\int_{\cell_j}W'(x_i-x) \mu(x)\, dx - \Dx W'(x_i-x_j)\mu_j^{+}\bigg] \\
&\quad + \int_{\cell_i}W'(x_i-x) \mu(x)\, dx   \\
&= \sum_{i \neq j} \bigg[\int_{\cell_j}W'(x_i-x) \mu(x)\, dx - \Dx W'(x_i-x_j)\mu(x_j)\bigg] \\*
&\quad + O(\Dx^2) + \int_{\cell_i}W'(x_i-x) \mu(x)\, dx \\ 
&= O(\Dx^2) + \int_{\cell_i}W'(x_i-x) \mu(x)\, dx.
\end{align*}
In the above we could apply the midpoint rule since $W \in C^3(\R \setminus \{0\})$. Furthermore, using the antisymmetry of $W'$,
\begin{align*}
\int_{\cell_i}W'(x_i-x) \mu(x) dx = -\int_0^\frac{\Dx}{2}W'(z)\big(\mu(x_i+z)-\mu(x_i-z)\big)\, dz = O(\Dx^2).
\end{align*}
Finally, using the assumption that $\mu$ has compact support, we get
\begin{align*}
\sum_i \big(\phi_\iphf - \phi_\imhf \big)\Big[\big(\mu (W'\ast \mu)\big)(x_i) - a_i^{+}\mu_i^{+}\Big]& \\ 
\leq \Dx \|\phi\|_\Lip \sum_i \Big|\big(\mu (W'\ast \mu)\big)(x_{i}) - a_i^{+}\mu_i^{+}\Big| &= O(\Dx^2) \|\phi\|_\Lip .
\end{align*}
Applying the same analysis to the term $\big(\mu (W'\ast \mu)\big)(x_i) - a_i^{-}\mu_i^{-}$, we find that the last sum in \eqref{eq:errorexpression} is bounded by  $O(\Dt\Dx^2) \|\phi\|_\Lip$. Now, consider the first sum, 
\begin{align}
& \sum_i  \big(\phi_\iphf - \phi_\imhf\big)\int_{t^n}^{t^{n+1}}\big[ J_i(\rho_\Dx) - J_i(\proj\mu) \big]\,dt \notag \\
 & \qquad =\frac{1}{2}\sum_i\big(\phi_\iphf - \phi_\imhf\big)\int_{t^n}^{t^{n+1}}\big[(b_i^+-a_i^+)\rho_i^+ + (b_i^--a_i^-)\rho_i^- \notag \\
 & \qquad \qquad \qquad \qquad \qquad \qquad \qquad \quad + (a_i^++c)(\rho_i^+ - \mu_i^+) + (a_i^--c)(\rho_i^- -\mu_i^-) \big] dt \label{eq:trunc2}
 \end{align}
where, $b_i^\pm$ is the numerical velocity \eqref{eq:velocities} depending on $\rho_\Dx$, and $a_i^\pm$ \eqref{eq:velocities} depending on $\proj \mu$. Estimating $(W'\ast\mu)(x_{i})-a_i^+ $ as above and assuming $\|\phi\|_\Lip \leq 1$, we get
\begin{align*}
& \sum_i\big(\phi_\iphf - \phi_\imhf\big)\int_{t^n}^{t^{n+1}} (a_i^++c)(\rho_i^+ - \mu_i^+) \,dt \\
& \qquad \leq \sum_i\big(\phi_\iphf - \phi_\imhf\big)\int_{t^n}^{t^{n+1}} ((W'\ast \mu)(x_{i}) +c)(\rho_i^+ - \mu_i^+) \,dt + O(\Dt\Dx^2) \\
& \qquad \leq \int_{t^n}^{t^{n+1}} \|W'\ast \mu\|_\Lip \Dx \sum_i \frac{((W'\ast \mu)(x_{i}) +c)}{\|W'\ast \mu\|_\Lip}(\rho_i^+ - \mu_i^+) \,dt + O(\Dt\Dx^2) \\
& \qquad \leq \sup_{t \in [0,T]}\|W'\ast \mu(t)\|_\Lip \int_{t^n}^{t^{n+1}} d_1\big(\rho_\Dx^+(t),\proj\mu^+(t)\big) \, dt + O(\Dt\Dx^2),
\end{align*}
where $\rho_\Dx^+(t) = \Dx \sum_i \rho_i^+$ and $\proj\mu^+(t)=\Dx \sum_i \mu_i^+$. The first term in \eqref{eq:trunc2} satisfies 
\begin{align*}
&\sum_i\big(\phi_\iphf - \phi_\imhf\big)\int_{t^n}^{t^{n+1}} (b_i^+-a_i^+)\rho_i^+ dt \\
&\qquad \qquad \leq \frac{3}{2}\Dx \|\phi\|_\Lip \|\rho\|_\infty \sum_i \int_{t^n}^{t^{n+1}} |b_i^+-a_i^+| dt 
\end{align*}
as $\rho_\Dx$ is bounded. After splitting the sum into $i < j$ and $j<i$, performing a summation by parts, and remembering that $W$ is in $C^3(\R \setminus \{0\})$, we have that
\begin{align*}
b_i^+-a_i^+ &= \Dx \sum_{i \neq j} W'(x_i-x_j)(\rho_j^+-\mu_j^+) \\
&\leq C \|W''\|_{L^\infty(\R\setminus \{0\})} \sum_j \Dx^2 \Big|\sum_{k\leq j} \rho_k^+-\mu_k^+\Big| + 2\Dx \|W\|_\Lip \Big|\sum_{j\leq i}\mu_j^+-\rho_j^+\Big|,
\end{align*}
where $C$ (here and in the following) is a constant which might depend on $\mu_\Dx, \rho_\Dx$ and $W$, but not on $\Dt$ or $\Dx$. Plugging this into the above, one finds that
\begin{align*}
& \sum_i\big(\phi_\iphf - \phi_\imhf\big)\int_{t^n}^{t^{n+1}} (b_i^+-a_i^+)\rho_i^+ dt \\
& \qquad \qquad \leq C \Dx \|\phi\|_\Lip \sum_i \int_{t^n}^{t^{n+1}} \bigg( \Dx^2 \sum_j \Big|\sum_{k\leq j} \rho_k^+-\mu_k^+\Big| + \Dx \Big|\sum_{j\leq i}\mu_j^+-\rho_j^+\Big| \bigg)\, dt \\
& \qquad \qquad \leq C \|\phi\|_\Lip \int_{t^n}^{t^{n+1}} d_1\big(\rho_\Dx^+(t),\proj\mu^+(t)\big)\, dt.
\end{align*} 
The same analysis can be performed on $\rho^--\mu^-$ and $b^- - a^-$.
Finally, combining all the estimates above, and taking the supremum over all $\phi$ with $\|\phi\|_\Lip \leq 1$ yields
\begin{align*}
d_1\big(\rho_\Dx(t^{n+1}),\proj\mu(t^{n+1})\big) & \leq O(\Dt\Dx^2) + C \int_{t^n}^{t^{n+1}} d_1\big(\rho_\Dx^+(t),\proj\mu^+(t)\big)  \\
& \qquad \qquad \qquad \qquad \qquad \qquad + d_1\big(\rho_\Dx^-(t),\proj\mu^-(t)\big) \, dt \\
& \leq O(\Dt\Dx^2) + C \int_{t^n}^{t^{n+1}} d_1\big(\rho_\Dx(t),\proj\mu(t)\big) \, dt,
\end{align*}
after carefully checking that the second inequality in the above holds. Now, applying Gr\" onwall's inequality, we can conclude that 
\begin{align*}
d_1\big(\rho_\Dx(t^{n+1}),\proj\mu(t^{n+1})\big) = O(\Dt\Dx^2).
\end{align*}
\end{proof}

\subsection{Energy decay}\label{subsec-energy}
As long as the numerical approximation computed with \eqref{eq:numericalmethod_1d} stays bounded, the corresponding interaction energy \eqref{eq:int_energ} decays over time modulo a term of order $\Dx$. For the sake of simplicity we show this result only for the semi-discrete version \eqref{eq:semidiscrete} of \eqref{eq:scheme_1d}, \ \eqref{eq:diffusiondef}.

\begin{proposition}
Assume that $W \in C^2(\R \setminus \{0\})$ satisfies \ref{cond:lip}. Let $$\rho_\Dx(t) = \Dx \sum_i \rho_\iphf(t)\delta_{x_\iphf},$$ where $\rho_\iphf(t)$ is a solution to the semi-discrete scheme \eqref{eq:semidiscrete}, and let $\W (\rho_\Dx(t))$ be the corresponding interaction energy \eqref{eq:int_energ}. If either $\rho_\Dx(t)$ is bounded or $|W(\Dx)|\leq C \Dx^2$, then 
\begin{align}\label{eq:energydec}
\frac{d}{dt} \W (\rho_\Dx(t)) \leq -\frac{\Dx}{4}\sum_i \left(a_i^+(t) +a_{i+1}^-(t)\right)^2\rho_i^+ + K\Dx.
\end{align}
The constant $K$ depends on $\|W\|_\Lip$, $\|W''\|_{L^\infty(\R \setminus \{0\})}$, and either $C$ or $\max_{i \in \Z} \{ \rho_i(t) \}$.
\end{proposition}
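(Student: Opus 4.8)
The plan is to differentiate the discrete interaction energy directly and to recognize the gradient‑flow dissipation identity $\tfrac{d}{dt}\W(\rho)=-\int(W'*\rho)^2\,\rho\le0$ at the discrete level. Writing $\W(\rho_\Dx(t))=\tfrac12\Dx^2\sum_{i,j}W(x_\iphf-x_\jphf)\rho_\iphf\rho_\jphf$ and using the symmetry $W(\bx)=W(-\bx)$ from \ref{cond:lip}, the product rule gives $\tfrac{d}{dt}\W(\rho_\Dx)=\Dx\sum_i\dot\rho_\iphf\,\Phi_\iphf$, where $\Phi_\iphf:=(W*\rho_\Dx)(x_\iphf)=\Dx\sum_jW(x_\iphf-x_\jphf)\rho_\jphf$ is the discrete potential. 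Substituting the semi‑discrete scheme \eqref{eq:semidiscrete} and summing by parts in $i$ (boundary terms vanish by the decay of $\rho_\Dx$) turns this into $-\sum_iJ_i\,(\Phi_\iphf-\Phi_\imhf)$. The quantity $V_i:=(\Phi_\iphf-\Phi_\imhf)/\Dx$ is a discrete gradient of the potential centered at $x_i$; a midpoint expansion $W(x_i-x_j)-W(x_{i-1}-x_j)=\Dx\,W'(x_\imhf-x_j)+O(\Dx^2\|W''\|_{L^\infty(\R\setminus\{0\})})$ is legitimate because the half‑grid argument $x_\imhf-x_j$ never hits the origin, so $V_i$ is a consistent discrete velocity, matching the numerical velocities $a_i^\pm$ of \eqref{eq:velocities} up to the error terms discussed below.

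Next I would insert the flux \eqref{eq:diffusiondef} for $J_i$, replace $V_i$ and $a_i^\pm$ by the common edge velocity (within $O(\Dx)$), and reorganize over the edges $x_\iphf$. The algebraic heart is the identity $\rho_\iphf=\tfrac12(\rho_i^++\rho_{i+1}^-)$, which lets me pair the right reconstruction in cell $i$ with the left reconstruction in cell $i+1$. After reindexing $i\mapsto i+1$ in the $\rho_i^-$–contributions, the physical part of $-\Dx\sum_iJ_iV_i$ collapses as
$-\tfrac{\Dx}2\sum_iV_i^2(\rho_i^++\rho_i^-)=-\tfrac{\Dx}4\sum_i(a_i^++a_{i+1}^-)^2\rho_\iphf+O(\Dx)$, and finally $\rho_\iphf$ may be swapped for $\rho_i^+$ at the cost of one more summation by parts producing $O(\Dx)$, giving exactly the stated leading term $-\tfrac{\Dx}4\sum_i(a_i^++a_{i+1}^-)^2\rho_i^+$.

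Everything else must be absorbed into $K\Dx$, and two contributions require care. First, the near‑diagonal terms, where $W$ is evaluated at $\pm\Dx$ close to its singularity, contribute an amount proportional to $W(\Dx)(\rho_\imhf-\rho_\iphf)$; these are controlled either by $|W(\Dx)|\le C\Dx^2$ directly, or, when $\rho_\Dx$ is bounded, by $|W(\Dx)|\le\|W\|_\Lip\Dx$ together with $|\rho_\imhf-\rho_\iphf|\le 2\max_i\rho_\iphf$. Second, the Lax–Friedrichs diffusion $-\tfrac{\Dx}2c\sum_iV_i(\rho_i^+-\rho_i^-)$: a naive estimate only yields $O(1)$, since $\sum_i|\rho_i^+-\rho_i^-|$ need not be small. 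The remedy is to use $\rho_i^+-\rho_{i+1}^-=\sigma_i-\sigma_{i+1}$ and summation by parts to rewrite this as a sum weighted by the velocity increments, and then invoke the grid‑scale regularity $|a_i^\pm-a_{i+1}^\pm|=O(\Dx)$: its smooth part is Lipschitz because $W''$ is bounded away from the origin, while its pointy part across one cell is $\sim\Dx\,W'(\Dx)\,\rho$, which is $O(\Dx)$ precisely under either hypothesis on $\rho_\Dx$ or $W(\Dx)$.

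The main obstacle is exactly this last estimate. The discrete velocity $a_i^\pm$ is only Lipschitz on the scale of the grid once the interaction between the singularity of $W$ at the origin and the local concentration of mass is controlled, and a crude bound on the numerical diffusion is one power of $\Dx$ too large. All the other ingredients — the symmetry reduction, the summation by parts, the midpoint expansion, and the reindexing built on $\rho_\iphf=\tfrac12(\rho_i^++\rho_{i+1}^-)$ — are routine once $|a_i^\pm-a_{i+1}^\pm|=O(\Dx)$ is in hand, and it is to secure this bound that the dichotomy ``$\rho_\Dx$ bounded or $|W(\Dx)|\le C\Dx^2$'' is imposed, with the constant $K$ depending on $\|W\|_\Lip$, $\|W''\|_{L^\infty(\R\setminus\{0\})}$ and on either $\max_i\rho_\iphf$ or $C$.
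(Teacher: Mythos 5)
Your proposal is correct and takes essentially the same route as the paper's proof: differentiate the discrete energy, insert the semi-discrete scheme, sum by parts, pass from differences of $W$ to $W'$ with the near-diagonal (self-interaction) terms controlled by exactly the stated dichotomy on $|W(\Dx)|$ or $\max_i\rho_\iphf$, use the identity $2\rho_\iphf=\rho_i^{+}+\rho_{i+1}^{-}$ to produce the dissipation term $-\tfrac{\Dx}{4}\sum_i\bigl(a_i^{+}+a_{i+1}^{-}\bigr)^2\rho_i^{+}$, and absorb the Lax--Friedrichs diffusion by a summation by parts combined with the $O(\Dx)$ grid-scale Lipschitz bound on the velocities. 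The only cosmetic difference is that you carry the exact discrete velocity $V_i=(\Phi_\iphf-\Phi_\imhf)/\Dx$ and compare it to $\tfrac12\bigl(a_i^{+}+a_{i+1}^{-}\bigr)$, whereas the paper replaces the difference quotient of $W$ by $W'$ at the outset and keeps the resulting algebra exact; the estimates involved are the same.
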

\begin{proof}
Denote $W(x_i-x_j)$ as $W_{i-j}$ and $W'(x_i-x_j)$ as $W'_{i-j}$. The time derivative of $\W (\rho_\Dx(t))$ is
\begin{align*}
\frac{d}{dt} \W (\rho_\Dx) &= \frac{1}{2}\frac{d}{dt} \int_{\R^2} W(x-y)\rho_\Dx(dx)\rho_\Dx(dy) \\
& =  \Dx^2 \sum_i \sum_j W_{i-j}\rho_\jphf\partial_t\rho_\iphf.
\end{align*}
From the semi-discrete version of \eqref{eq:scheme_1d}, \eqref{eq:diffusiondef}, a summation by parts and Lemma \ref{lem:cfl} {\it (i)--(ii)}, we get
\begin{align*}
\Dx^2 \sum_i & \sum_j W_{i-j} \rho_\jphf\partial_t\rho_\iphf \\
= & -\frac{1}{2}\Dx \sum_i \sum_j \rho_\jphf \left( W_{i-j}- W_{i-1-j} \right)\left[ a_i^+\rho_i^+ + a_i^-\rho_i^- + c(\rho_i^+ -\rho_i^-) \right] \\
\leq & -\frac{1}{2}\Dx^2 \sum_i \sum_{j\neq i} \rho_\jphf W'_{i-j} \left[ a_i^+\rho_i^+ + a_i^-\rho_i^- + c(\rho_i^+ -\rho_i^-) \right] \\
& +  2\|W\|_\Lip \left[\|W''\|_{L^\infty(\R \setminus \{0\})}\Dx + |W(\Dx)|\Dx \sum_i \rho_\iphf\left(\rho_i^+ + \rho_i^- \right) \right] \\
 =: & I + \mathit{II}.
\end{align*}
If either $\rho$ is bounded, $\max_{i \in \Z} \{ \rho_i \} \leq C$, or $|W(x)|\leq C \Dx^2$, then 
\begin{align*}
\mathit{II} \leq 2\|W\|_\Lip \left[\|W''\|_{L^\infty(\R \setminus \{0\})} + C\right] \Dx.
\end{align*}
To estimate $I$ we use the relation $2\rho_\iphf = \rho_i^+ + \rho_{i+1}^-$,
\begin{align}
I = &  -\frac{\Dx^2}{4} \sum_i \sum_{j\neq i} \left(\rho_i^+ + \rho_{i+1}^-\right) W'_{i-j} \left[ a_i^+\rho_i^+ + a_i^-\rho_i^- + c(\rho_i^+ -\rho_i^-) \right] \nonumber \\
= & -\frac{\Dx}{4} \sum_i \left(a_i^+ +a_{i+1}^- \right) \left[ a_i^+\rho_i^+ + a_i^-\rho_i^- + c(\rho_i^+ -\rho_i^-) \right] \nonumber \\
= & -\frac{\Dx}{4} \sum_i \left(a_i^+ +a_{i+1}^- \right)^2\rho_i^+ - \frac{\Dx}{4}\sum_i \left(a_i^+ +a_{i+1}^- \right)\left( a_i^- - a_{i+1}^- \right)\rho_i^-  \nonumber \\
&-\frac{\Dx}{4}\sum_i \left(a_i^+ +a_{i+1}^- \right)\big(c-a_{i+1}^-\big)\big(\rho_i^+-\rho_i^-\big). \label{eq:inter_energy_est}
\end{align}
By a summation by parts and the antisymmetry of $W$,
\begin{align*}
 a_i^- - a_{i+1}^- &= \Dx \sum_{j\neq i} W'(x_i - x_j)\left(\rho_j^- - \rho_{j+1}^- \right) \\
 &= \Dx \sum_{j \neq i,i+1} \left(W'_{i-j} - W'_{i+1-j} \right)\rho_j^- -\Dx W'(\Dx)\big(\rho_i^-+\rho_{i+1}^- \big),
\end{align*}
which is bounded by $\left[\|W''\|_{L^\infty(\R \setminus \{0\})} + 2C\right]\Dx$ under the given assumptions on $W$ and $\rho_\Dx$. It follows that the second term in \eqref{eq:inter_energy_est} is bounded by the same expression as $\mathit{II}$. From \eqref{eq:edge_val2}, $\rho_i^+-\rho_i^- = \rho_\iphf -\rho_\imhf - \hf (\sigma_{i+1}-\sigma_{i-1})$. Then, after yet another summation by parts, the last term in \eqref{eq:inter_energy_est} can be bounded similarly to the second term,
\begin{align*}
\frac{\Dx}{4}\sum_i \left(a_i^+ +a_{i+1}^- \right)\big(c-a_{i+1}^-\big)\big(\rho_i^+-\rho_i^-\big) \leq 3 \|W\|_\Lip \left[\|W''\|_{L^\infty(\R \setminus \{0\})} + C\right] \Dx.
\end{align*}
This concludes the proof.
\end{proof}

A similar expression to \eqref{eq:energydec} can also be found for the semi-discrete version of the upwind scheme \eqref{eq:scheme_1d}, \eqref{eq:upwinddef}.


\section{Extension to several dimensions}
\noindent We proceed by extending the scheme derived in the previous section to multiple spatial dimensions. For the sake of notational simplicity we consider only the two-dimensional version of the aggregation equation \eqref{eq:agg_eq},
\begin{equation*}
\partial_t \rho = \partial_x\big(\big(\partial_x W * \rho\big)\rho \big) + \partial_y\big(\big(\partial_y W * \rho\big)\rho \big),
\end{equation*}
although the scheme derived here is applicable for any number of space dimensions. Moreover, we will restrict ourselves to Cartesian (rectangular) meshes, and we postpone the design of numerical schemes for more general (triangular or quadrilateral) meshes to a future paper. Thus, we consider a mesh of equispaced gridpoints $\mathbf{x}_{\iphf,\jphf}:=(x_\iphf,y_\jphf)$, where $x_\iphf-x_\imhf=\Dx$ and $y_\jphf-y_\jmhf=\Dy$. The spatial domain is partitioned into cells $\cell_{i,j} = [x_\imhf,x_\iphf)\times[y_\jmhf,y_\jphf)$. 

\begin{figure}
\centering
\subfigure[The mass in the striped domain is $\Dx\Dy \rho_{\iphf,\jphf}$ for both the numerical approximation and the reconstruction.]{
\includegraphics[width=0.48\textwidth]{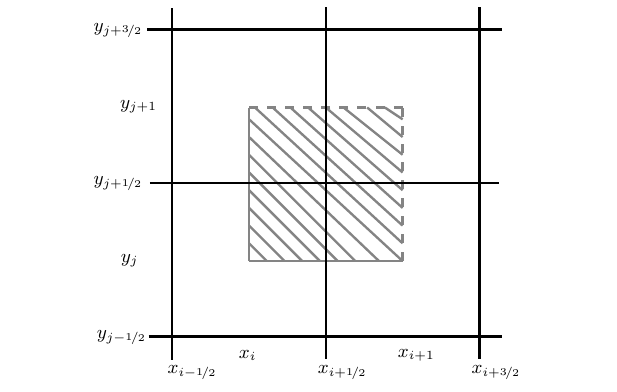}}
\hspace{0.2em}
\subfigure[The subdomains measured by $r$ to obtain the reconstructed values. Red: $\rho_{i,\jphf}^E$, magenta: $\rho_{\iphf,j+1}^S$, blue: $\rho_{i+1,\jphf}^W$, green: $\rho_{\iphf,j}^N$.]{
\includegraphics[width=0.48\textwidth]{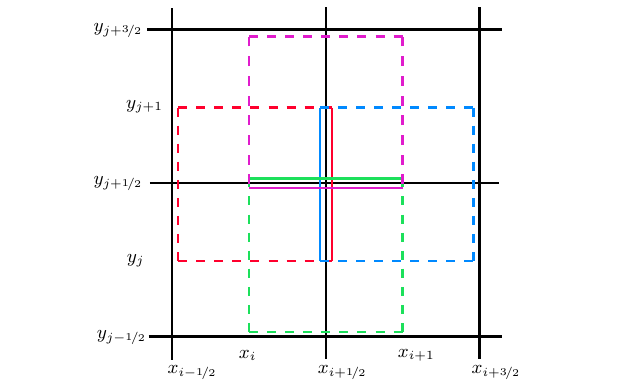}}
\caption{Reconstruction in two dimensions.}
\label{fig:rec_values}
\end{figure}  
A finite volume method for a two-dimensional conservation law would approximate the average over each cell $\cell_{i,j}$. By duality, we let the numerical approximation $\rho_{\iphf,\jphf}$ be centered at the vertices $\mathbf{x}_{\iphf,\jphf}$. Given a numerical approximation 
$\rho_\Delta\in \Prob_1(\R^2)$ of the form
\begin{equation*}
\rho_\Delta = \Dx\Dy\sum_{i,j}\rho_{\iphf,\jphf}\delta_{\mathbf{x}_{\iphf,\jphf}}
\end{equation*}
(here and below we suppress the dependence on $n$ for the sake of notational convenience), we perform a reconstruction by defining
\begin{align*}
r &= \Dx\Dy\sum_{i,j} \tilde{\rho}_{\iphf,\jphf}\delta_{\mathbf{x}_{\iphf,\jphf}} + \sum_{i,j}\sigma_{i,j}\mathcal{L}\big|_{\cell_{ij}}, \\*
\tilde{\rho}_{\iphf,\jphf} &= \rho_{\iphf,\jphf} -\frac{1}{4}\big(\sigma_{ij}+\sigma_{i+1,j+1}+\sigma_{i,j+1}+\sigma_{i+1,j}\big),\\*
\sigma_{i,j} &= \min\big\{\rho_{k+\hf,l+\hf}\,:\,\text{$k$ and $l$ are such that }(x_{k+\hf},y_{l+\hf})\in\overline{\cell_{i,j}}\big\}.
\end{align*}
Next, define the four reconstructed values
\begin{align*}
\rho_{i,\jphf}^W &= r\big([x_\imhf,x_\iphf)\times(y_j,y_{j+1})\big), &\rho_{i,\jphf}^E = r\big((x_\imhf,x_\iphf]\times(y_j,y_{j+1})\big), \\
\rho_{\iphf,j}^N &= r\big((x_i,x_{i+1})\times(y_\jmhf,y_\jphf]\big), &\rho_{\iphf,j}^S = r\big((x_i,x_{i+1})\times[y_\jmhf,y_\jphf)\big)
\end{align*}
(cf.~Figure \ref{fig:rec_values}). Using the definition of $r$ it is easy to show that
\begin{equation}\label{eq:localconservation2D}
\frac{1}{4}\big(\rho_{i,\jphf}^E + \rho_{i+1,\jphf}^W + \rho_{\iphf,j}^N + \rho_{\iphf,j+1}^S\big) = \rho_{\iphf,\jphf}.
\end{equation}
Moreover, $r$ is a nonnegative measure in $\Prob_1(\R^2)$ and
\begin{equation*}
d_1(r,\rho_\Delta) \leq \Dx+\Dy.
\end{equation*}
Let
\begin{equation*}
\rho_{\iphf,\jphf}^0=\frac{1}{\Dx\Dy}\rho^0((x_i,x_{i+1}] \times (y_i,y_{i+1}]).
\end{equation*}
Dropping the superindex $^n$ for notational convenience, we propose the following Lax--Friedrichs type scheme:
\begin{equation}\label{eq:laxfr2D}
\rho_{\iphf,\jphf}^{n+1} = \rho_{\iphf,\jphf} + \frac{\Dt}{\Dx}\big(J_{i+1,\jphf}-J_{i,\jphf}\big) + \frac{\Dt}{\Dy}\big(J_{\iphf,j+1}-J_{\iphf,j}\big)
\end{equation}
where the numerical flux function at time $t^n$ is defined as
\begin{equation*}
\begin{split}
J_{i,\jphf} &= \frac{(a^W\rho^W)_{i,\jphf}+(a^E\rho^E)_{i,\jphf}}{2} + \frac{c}{2}\big(\rho_{i,\jphf}^E-\rho_{i,\jphf}^W\big), \\
J_{\iphf,j} &= \frac{(a^N\rho^N)_{\iphf,j}+(a^S\rho^S)_{\iphf,j}}{2} + \frac{c}{2}\big(\rho_{\iphf,j}^N-\rho_{\iphf,j}^S\big),
\end{split}
\end{equation*}
and
\begin{align*}
a^W_{i,\jphf} &= \big(\partial_x^0 W * \rho^W\big)_{i,\jphf} = \Dx\Dy\sum_{(k,l) \neq (i,j)} \partial_x W\big(x_i-x_k,y_\jphf-y_\lphf\big)\rho^W_{k,\lphf}, \\*
a^N_{\iphf,j} &= \big(\partial_y^0 W * \rho^N\big)_{\iphf,j} = \Dx\Dy\sum_{(k,l) \neq (i,j)} \partial_y W\big(x_\iphf-x_\kphf,y_j-y_l\big)\rho^N_{\kphf,l}, \\*
a^E_{i,\jphf} &= \big(\partial_x^0 W * \rho^E\big)_{i,\jphf} = \Dx\Dy\sum_{(k,l) \neq (i,j)} \partial_x W\big(x_i-x_k,y_\jphf-y_\lphf\big)\rho^E_{k,\lphf}, \\*
a^S_{\iphf,j} &= \big(\partial_y^0 W * \rho^S\big)_{\iphf,j} = \Dx\Dy\sum_{(k,l) \neq (i,j)} \partial_y W\big(x_\iphf-x_\kphf,y_j-y_l\big)\rho^S_{\kphf,l}\,.
\end{align*}
Analogously, one can define an upwind-type scheme by mimicking the definition \eqref{eq:upwinddef} by
\begin{equation*}
\begin{split}
J_{i,\jphf} &= \max\big(a^W_{i,\jphf},0\big) \rho^W_{i,\jphf} + \min\big(a^E_{i,\jphf},0\big) \rho^E_{i,\jphf} \,, \\
J_{\iphf,j} &= \max\big(a^N_{\iphf,j},0\big) \rho^N_{\iphf,j} + \min\big(a^S_{\iphf,j},0\big) \rho^S_{\iphf,j}\,.
\end{split}
\end{equation*}
Using \eqref{eq:localconservation2D} it is straightforward to rewrite \eqref{eq:laxfr2D} as
\begin{align*}
&\rho_{\iphf,\jphf}^{n+1} \\
& = \rho_{i,\jphf}^E\left(\frac{1}{4} - \frac{\Dt}{2\Dx}\big(c+a^E_{i,\jphf}\big)\right) + \rho_{i+1,\jphf}^W\left(\frac{1}{4} - \frac{\Dt}{2\Dx}\big(c-a^W_{i+1,\jphf}\big)\right) \\
&+ \rho_{\iphf,j}^N\left(\frac{1}{4} - \frac{\Dt}{2\Dy}\big(c+a^N_{\iphf,j}\big)\right) + \rho_{\iphf,j+1}^S\left(\frac{1}{4} - \frac{\Dt}{2\Dy}\big(c-a^S_{\iphf,j+1}\big)\right) \\
&+ \rho_{i+1,\jphf}^E\frac{\Dt}{2\Dx}\big(c+a^E_{i+1,\jphf}\big) + \rho_{i,\jphf}^W\frac{\Dt}{2\Dx}\big(c-a^W_{i,\jphf}\big) \\
&+ \rho_{\iphf,j+1}^N\frac{\Dt}{2\Dy}\big(c+a^N_{\iphf,j+1}\big) + \rho_{\iphf,j}^S\frac{\Dt}{2\Dy}\big(c-a^S_{\iphf,j}\big).
\end{align*}
The coefficients of the reconstructed values of $\rho$ are nonnegative if we choose e.g.
\[
c \geq |a^E|,|a^W|,|a^N|,|a^S|, \qquad c\Dt \leq \frac{\min(\Dx,\Dy)}{4}.
\]
Since $|a^E|,|a^W|,|a^N|,|a^S| \leq \|W\|_\Lip$, a sufficient condition for nonnegativity of $\rho^{n+1}$ is
\begin{equation}\label{eq:cfl2d}
\Dt \leq \frac{\min(\Dx,\Dy)}{4\|W\|_\Lip}.
\end{equation}
We state this and the remaining stability properties in the following lemma. As in Section \ref{sec:schemeprop1d}, we define the linear interpolation
\begin{equation}\label{eq:num_approx2d}
\rho_\Delta (t) := \frac{t^{n+1}-t}{\Dt}\rho_\Delta(t^n) + \frac{t-t^n}{\Dt}\rho_\Delta(t^{n+1}), \qquad t\in[t^n,t^{n+1})
\end{equation}
where $\rho_\Delta(t^n) = \Dx\Dy\sum_{i,j} \rho_{\iphf,\jphf}^n\delta_{x_\iphf,y_\jphf}$ and $\rho_{\iphf,\jphf}^n$ is computed with the numerical scheme \eqref{eq:laxfr2D}.

\begin{lemma}\label{lem:cfl2d}
Assume that $\rho^0 \in \Prob_1(\R^2)$ and that $W$ satisfies \ref{cond:lip}--\ref{cond:c1}. Consider the scheme \eqref{eq:laxfr2D} with
\begin{equation*}
c^n = \max_{i,j}\big\{|a^E_{i,\jphf}|,|a^W_{i,\jphf}|,|a^N_{\iphf,j}|,|a^S_{\iphf,j}|\big\}
\end{equation*}
and assume that $\Dt$ satisfies the \emph{CFL condition} \eqref{eq:cfl2d}. Then for all $t\geq0$ and $n\in\N_0$:
\begin{itemize}
\item[(i)] $\rho_\Delta(t) \geq 0$ and $\int_{\R^2}\rho_\Delta(d\bx,t) = 1$,
\item[(ii)] $c^n \leq \|W\|_\Lip$,
\item[(iii)] $\rho_\Delta(t)$ has bounded first order moment: 
\begin{equation*}
\int_{\R^2}|\bx|\,\rho_\Delta(d\bx,t) \leq \int_{\R^2}|\bx|\,\rho^0(d\bx) + 4t\|W\|_\Lip,
\end{equation*}
\item[(iv)] Let $r \geq 1$ and $\epsilon > 0$. Then
\begin{equation*}
\int_{\R^2 \setminus [-r,r]^2} |\bx|\,\rho^0(d\bx) < \epsilon \quad \implies  \quad \int_{\R^2 \setminus [-R,R]^2} |\bx|\,\rho_\Delta(d\bx, t) < \epsilon C(t),
\end{equation*}
where $R=r+t (\max \{ \Dx,\Dy\}/\Dt)$ and $C(t) = \exp \left(\frac{7}{2} \|W\|_\Lip t \right)$.
\item[(v)] The center of mass is preserved in time, i.e., 
\[
\int_{\R^2} \bx \,\rho_\Delta(d\bx, t) = \int_{\R^2} \bx \,\rho^0(d\bx) \qquad \text{for all } n\in\N\,.
\]
\item[(vi)] The map $t \mapsto \rho_\Delta(t)$ is uniformly Lipschitz, in the sense that
\begin{equation*}
d_1\big(\rho_\Delta(t), \rho_\Delta(s)\big) \leq 4\|W\|_{\Lip}|t-s|
\end{equation*}
for all $t,s\geq0$, where $d_1$ denotes the Monge--Kantorovich--Rubinstein metric.
\item[(vii)] If in addition $\rho^0 \in \Prob_2(\R)$ then $\rho_\Delta(t)$ has bounded second order moment:
\begin{equation*}
\begin{split}
\int_{\R^2}|\bx|^2\,\rho_\Delta(d\bx,t) &\leq \int_{\R^2}|\bx|^2\,\rho^0(d\bx) + 48 t^2\|W\|_\Lip^2 + 12 t\|W\|_\Lip\int_{\R^2}|\bx|\,\rho^0(d\bx), 
\end{split}
\end{equation*}
\end{itemize}
\end{lemma}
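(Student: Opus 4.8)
The plan is to follow the one-dimensional Lemma~\ref{lem:cfl} almost verbatim, arguing by induction on $n$. Because of the linear interpolation \eqref{eq:num_approx2d} it suffices to establish each property at the discrete times $t=t^n$, so I assume (i) at level $n$ and propagate all the bounds to level $n+1$. The single structural tool is the convex-combination rewriting of \eqref{eq:laxfr2D} displayed just before the lemma, whose eight coefficients sum to $1$ by virtue of the local conservation identity \eqref{eq:localconservation2D}.

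For (ii) I would first bound the velocities: since the reconstruction preserves total mass and $|\partial_x W|,|\partial_y W|\leq|\nabla W|\leq\|W\|_\Lip$, each of the discrete convolutions defining $a^{E},a^{W},a^{N},a^{S}$ is bounded by $\|W\|_\Lip$, whence $c^n\leq\|W\|_\Lip$. For (i), nonnegativity of the reconstructed measure $r$ makes all four edge values $\rho^{E},\rho^{W},\rho^{N},\rho^{S}$ nonnegative, and under the CFL condition \eqref{eq:cfl2d} every one of the eight coefficients in the convex-combination form of $\rho^{n+1}_{\iphf,\jphf}$ is nonnegative; positivity is therefore preserved. Mass conservation then follows by summing the conservative update \eqref{eq:laxfr2D} over all $i,j$ and telescoping both flux differences to zero.

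Properties (iii), (v), (vi) and (vii) are all obtained by testing the scheme against $|\bx_{\iphf,\jphf}|$, the coordinate functions $x_\iphf$ and $y_\jphf$, a $1$-Lipschitz $\phi$, and $|\bx_{\iphf,\jphf}|^2$ respectively, and performing summation by parts in both the $i$- and the $j$-direction. As in one dimension, the boundary terms at infinity vanish because the velocities are bounded (by (ii)) and the edge values are controlled by $\rho_\Delta(t^n)\in\Prob_1(\R^2)$ through the two-dimensional analogue of the bound $\rho_i^{n,\pm}\leq\tfrac32\rho^n$. The only genuinely new feature is that there are now two flux differences, so each per-step increment collects one contribution from each coordinate direction; this is exactly what turns the one-dimensional constant $2\|W\|_\Lip$ into $4\|W\|_\Lip$ in (iii) and (vi) and produces the stated second-moment constants in (vii). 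For the center of mass (v), the antisymmetry $\nabla W(-\bx)=-\nabla W(\bx)$ coming from \ref{cond:lip} reduces the $x$-moment increment to the sum $\sum_{i,j}\big(a^{E}\rho^{E}+a^{W}\rho^{W}\big)_{i,\jphf}$, which vanishes after the odd-symmetry swap of summation indices used in 1D; the $y$-component is handled identically, and the $c$-terms and transverse fluxes cancel by telescoping.

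The tightness estimate (iv) is the step I expect to require the most care. As in 1D I would fix the exterior region $\R^2\setminus[-R,R]^2$, localise the summation by parts to the indices lying in the outer strips, and bound the net outward flux using the edge-value bounds, the CFL condition \eqref{eq:cfl2d} and (ii). The main obstacle here is purely the two-dimensional bookkeeping: the exterior region is now bounded by two coordinate strips together with a corner, and the cross-terms generated by the four-point minimum in $\sigma_{i,j}$ must be tracked through both summations by parts. Assembling the two directional contributions with the boundary-strip terms is what yields the growth factor $1+\tfrac72\|W\|_\Lip\Dt$, hence $C(t)=\exp\big(\tfrac72\|W\|_\Lip t\big)$ after iterating over timesteps, the condition $r\geq1$ being used exactly as in 1D to keep the shifted index ranges inside the enlarged region with $R=r+t\max\{\Dx,\Dy\}/\Dt$.
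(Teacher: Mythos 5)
Your proposal is correct and takes essentially the same approach as the paper: the paper's proof of Lemma \ref{lem:cfl2d} consists of a single remark that it is a simple extension of the proof of Lemma \ref{lem:cfl} to two dimensions, and your sketch carries out exactly that extension (induction on $n$, velocity bounds via mass preservation and the Lipschitz bound on $W$, nonnegativity of the coefficients in the rewriting of \eqref{eq:laxfr2D} under the CFL condition \eqref{eq:cfl2d}, directional summations by parts for the moment bounds and time continuity, and antisymmetry of $\nabla W$ for the center of mass). One minor slip: the eight coefficients in that rewriting do not literally sum to $1$ (their sum differs from $1$ by telescoping velocity differences), so it is not strictly a convex combination; this is harmless, since positivity needs only their nonnegativity and mass conservation follows, as you yourself note, from summing the conservative form over all $i,j$.
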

\begin{proof}
The proof is a simple extension of the proof of Lemma \ref{lem:cfl} to two dimensions and is therefore omitted.
\end{proof}

By exactly the same approach as in Section \ref{sec:convergence1d}, we can prove convergence of the two-dimensional scheme.
\begin{theorem}\label{thrm:conv2d}
Let $\rho^0 \in \Prob_1(\R^2)$, assume that $W$ satisfies properties \ref{cond:lip} and \ref{cond:c1}, and that the CFL condition \eqref{eq:cfl2d} is satisfied. Then for any $T>0$, the numerical approximation $\rho_\Delta$ generated by the scheme \eqref{eq:laxfr2D} has a uniformly convergent subsequence,
\begin{equation*}
\sup_{t\in[0,T]} d_1\big(\rho_{\Delta'}(t),\rho(t)\big) \to 0 \qquad \text{as } \Delta'=(\Delta x', \Delta y')\to0,
\end{equation*}
and the limit $\rho$ is a $d_1$-weak measure solution of \eqref{eq:agg_eq} satisfying
\begin{equation*}
d_1(\rho(t),\rho(s)) \leq 2\|W\|_{\Lip(\R)}|t-s| \qquad \forall\ t,s\in\R_+.
\end{equation*}
If $W$ also satisfies \ref{cond:convex} and $\rho^0 \in \Prob_2(\R^2)$ then the whole sequence $\rho_\Delta$ converges, and the limit $\rho$ is the unique gradient flow solution of \eqref{eq:agg_eq}.
\end{theorem}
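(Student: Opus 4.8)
The plan is to follow the proof of Theorem \ref{thrm:main} essentially verbatim, replacing each invocation of Lemma \ref{lem:cfl} by the corresponding statement of Lemma \ref{lem:cfl2d} and carrying the extra $y$-direction through the summations by parts. First I would establish $d_1$-compactness of the family $K := \{\rho_\Delta(t)\,:\,\Delta>0,\ t\in[0,T]\}$. By Lemma \ref{lem:cfl2d}~\textit{(i)} and \textit{(iii)}, $K\subset\Prob_1(\R^2)$ with uniformly bounded first moment, so $K$ is tight and Prohorov's theorem gives weak sequential precompactness. To upgrade to $d_1$ I would verify uniform integrability of the first moments using the uniform tightness estimate of Lemma \ref{lem:cfl2d}~\textit{(iv)} and then invoke \cite[Theorem 7.12]{Villani}, exactly as in one dimension. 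The uniform Lipschitz-in-time bound of Lemma \ref{lem:cfl2d}~\textit{(vi)} together with Ascoli's theorem then yields a subsequence $\rho_{\Delta'}$ converging uniformly in $d_1$ on $[0,T]$ to a Lipschitz curve $\rho:[0,T]\to\Prob_1(\R^2)$.

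Next I would show that this limit solves the equation distributionally. Testing the scheme \eqref{eq:laxfr2D} against $\phi\in C^2_c(\R^2)$ and summing by parts in both $i$ and $j$ produces, for each direction, a convective term built from the reconstructed values $\rho^{E},\rho^{W}$, $\rho^{N},\rho^{S}$ and the velocities $a^{E},a^{W}$, $a^{N},a^{S}$, plus a Lax--Friedrichs numerical-diffusion term proportional to $c^n(\rho_{i,\jphf}^E-\rho_{i,\jphf}^W)$ and $c^n(\rho_{\iphf,j}^N-\rho_{\iphf,j}^S)$. As in the one-dimensional calculation \eqref{eq:distcalc}, the local-conservation identity \eqref{eq:localconservation2D} lets me rewrite each diffusion term as a discrete second difference of $\phi$ acting on a mass lying in $[0,\rho_{\iphf,\jphf}]$, so that by Lemma \ref{lem:cfl2d}~\textit{(ii)} it is $O(\Dx^2+\Dy^2)$ and drops out in the limit. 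A further Taylor expansion of the remaining convective terms around the grid vertices leaves the leading-order expressions $\tfrac12\partial_x\phi\,a^{E/W}_\Delta\rho^{E/W}_\Delta$ and $\tfrac12\partial_y\phi\,a^{N/S}_\Delta\rho^{N/S}_\Delta$, modulo $O(\Dx+\Dy)$.

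It then remains to pass to the limit in these convective products. Because the reconstruction displaces mass over a distance at most $\Dx+\Dy$, each of the reconstructed measures $\rho_\Delta^{E},\rho_\Delta^{W},\rho_\Delta^{N},\rho_\Delta^{S}$ converges in $d_1$ to the same limit $\rho$. Replacing $\partial_x W,\partial_y W$ by everywhere-continuous piecewise-(bi)linear interpolants agreeing with the exact gradient off the origin (as was done for $W_\Dx'$ in one dimension), the stability result \cite[Lemma 3.1]{CJLV16} applies in each coordinate direction and gives $a^{E/W}_\Delta\rho^{E/W}_\Delta\wto(\partial_x^0 W\ast\rho)\rho$ and $a^{N/S}_\Delta\rho^{N/S}_\Delta\wto(\partial_y^0 W\ast\rho)\rho$, with $\partial^0 W\ast\rho$ as in \eqref{eq:min_nrm_velocity}. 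A summation by parts in time then identifies $\rho$ as a distributional solution of \eqref{eq:agg_eq}; since $|v|\leq\|W\|_\Lip$ a.e. we have $\|v\|_{L^2(\rho)}\in L^2_\loc(0,+\infty)$, and the $d_1$-Lipschitz bound makes $\rho$ absolutely continuous in $d_1$, so $\rho$ is a $d_1$-weak measure solution in the sense of Definition \ref{def:weaksol}.

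Finally, under the additional hypotheses $\rho^0\in\Prob_2(\R^2)$ and \ref{cond:convex}, Lemma \ref{lem:cfl2d}~\textit{(vii)} gives $\rho(t)\in\Prob_2(\R^2)$, and \cite[Theorem 8.3.1]{AGS2005} upgrades the $d_1$-absolute continuity to $d_2$-absolute continuity using $\|v\|_{L^2(\rho)}\in L^2_\loc(0,+\infty)$. Then $\rho$ is a $d_2$-weak measure solution, which by \cite[Section 2.3]{CDFLS2011} coincides with the unique gradient flow solution of Theorem \ref{thrm:unique}; uniqueness of the limit forces the whole sequence $\rho_\Delta$ to converge. I expect the main obstacle to be the consistency step: one must check that the two-dimensional reconstruction and the Lax--Friedrichs diffusion terms in both directions really contribute only at order $O(\Dx^2+\Dy^2)$, and that the stability lemma \cite[Lemma 3.1]{CJLV16}, stated for scalar velocities, can be applied componentwise to the vector-valued convolution once $\partial_x W,\partial_y W$ have been regularized into continuous interpolants.
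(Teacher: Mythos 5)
Your proposal is correct and is precisely what the paper does: the paper's entire proof of Theorem \ref{thrm:conv2d} is the remark that it follows ``by exactly the same approach as in Section \ref{sec:convergence1d}'', i.e.\ the one-dimensional argument of Theorem \ref{thrm:main} with Lemma \ref{lem:cfl} replaced by Lemma \ref{lem:cfl2d}, which is exactly the adaptation you carry out (compactness via tightness, uniform integrability and Ascoli; consistency via summation by parts, the local conservation identity \eqref{eq:localconservation2D} and Taylor expansion; limit identification via \cite[Lemma 3.1]{CJLV16}, \cite[Theorem 8.3.1]{AGS2005} and \cite[Section 2.3]{CDFLS2011}). Your filled-in details, including the componentwise application of the stability lemma to the regularized gradient and the $\Dx+\Dy$ mass-displacement bound for the 2D reconstruction, are consistent with the paper's framework.
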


\begin{lemma}\label{lem:2Dtrunc}
Assume that $\rho(t) \in C^2_c(\R^2)$ is sufficiently smooth in time and let $W \in C^3(\R^2\setminus \{0\})$ satisfy \ref{cond:lip}. Then the numerical scheme \eqref{eq:numericalmethod_1d} converges at a rate of $O(\Dx^2+\Dy^2)$ when measured in $d_1$.
\end{lemma}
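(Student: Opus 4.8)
The plan is to follow the one-dimensional argument of Lemma~\ref{lem:convrate} line by line, now working in both coordinate directions. As there, it suffices to show that the local truncation error is $O(\Dt(\Dx^2+\Dy^2))$, since Grönwall's inequality then promotes this to a global $O(\Dx^2+\Dy^2)$ rate in $d_1$. Let $\mu$ be the smooth gradient flow solution, and define the projection $\proj\mu(t)=\Dx\Dy\sum_{i,j}\mu_{\iphf,\jphf}(t)\,\delta_{\bx_{\iphf,\jphf}}$ with $\mu_{\iphf,\jphf}(t)=\mu\big(t,(x_i,x_{i+1}]\times(y_j,y_{j+1}]\big)/(\Dx\Dy)$. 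Run the semi-discrete version of \eqref{eq:laxfr2D} with initial datum $\proj\mu(t^n)$, denote its solution by $\rho_\Delta$, and test the error $\mathcal{E}^{n+1}=\proj\mu(t^{n+1})-\rho_\Delta(t^{n+1})$ against a $1$-Lipschitz function $\phi$, with $\phi_{\iphf,\jphf}=\phi(\bx_{\iphf,\jphf})$. Writing the exact fluxes as $M^x=\mu\,(\partial_xW\ast\mu)$ and $M^y=\mu\,(\partial_yW\ast\mu)$, a summation by parts in $i$ and in $j$ rewrites $\ip{\mathcal{E}^{n+1}}{\phi}$ as a sum over the directional flux errors $J_{i,\jphf}-M^x_{i,\jphf}$ and $J_{\iphf,j}-M^y_{\iphf,j}$, paired against $\phi_{\iphf,\jphf}-\phi_{\imhf,\jphf}$ and $\phi_{\iphf,\jphf}-\phi_{\iphf,\jmhf}$ respectively.

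Next I would add and subtract $J(\proj\mu)$ in each flux error, splitting it into a \emph{stability} contribution $J(\rho_\Delta)-J(\proj\mu)$ and a \emph{consistency} contribution $J(\proj\mu)-M$. The stability contribution is handled as in Lemma~\ref{lem:convrate}: the difference of the numerical velocities $a^{W,E,N,S}$ evaluated at $\rho_\Delta$ versus $\proj\mu$ is estimated by a summation by parts, using $W\in C^3$ to control the discrete second differences of the convolution kernel. Pairing the result against the increments of $\phi$ and invoking the Kantorovich--Rubinstein dual formulation of $d_1$ (in place of the one-dimensional partial-sum identity) yields a bound of the form $C\int_{t^n}^{t^{n+1}}d_1\big(\rho_\Delta(t),\proj\mu(t)\big)\,dt$, which is exactly the term Grönwall absorbs.

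The crux is the consistency contribution, which needs two estimates carried out per cell and then summed. First, the reconstructed edge values must be second-order accurate; from the definitions of $r$, $\sigma_{i,j}$ and $\rho^{W,E,N,S}$, together with the identity \eqref{eq:localconservation2D}, one checks by Taylor expansion that $\rho^E_{i,\jphf}-\rho^W_{i,\jphf}=O(\Dx^2+\Dy^2)$ and that each reconstructed value differs from $\mu$ at the matching evaluation point by $O(\Dx^2+\Dy^2)$, just as $\mu_i^\pm-\mu(x_i)=O(\Dx^2)$ in one dimension. Second, the discrete velocities must be consistent, $(\partial_xW\ast\mu)(x_i,y_\jphf)-a^E_{i,\jphf}=O(\Dx^2+\Dy^2)$ and likewise for $W,N,S$. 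Here I would separate the diagonal cell $(k,l)=(i,j)$ from the rest: the off-diagonal sum is a two-dimensional midpoint-rule approximation of the convolution integral with nodes placed at the reconstruction centres, whose error is $O(\Dx^2+\Dy^2)$ since $W\in C^3(\R^2\setminus\{0\})$, while the remaining self-interaction integral runs over the rectangle centred exactly at the evaluation point and is controlled by the evenness of $W$: because $\partial_xW(-\bx)=-\partial_xW(\bx)$, the leading contribution cancels against its reflection through the centre, leaving an $O(\Dx^2+\Dy^2)$ remainder.

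The main obstacle I anticipate is organising this self-interaction cancellation uniformly in two dimensions. Unlike the interval $\cell_i$ in the one-dimensional proof, the singular cell is now a rectangle, and the antisymmetric cancellation of $\partial_xW$ (respectively $\partial_yW$) must be set up around the correct centre of symmetry of the $E/W$ (respectively $N/S$) reconstruction box while the reconstructed measure $r$---a combination of Diracs and the Lebesgue part with slope $\sigma_{i,j}$---enters the convolution. Making the midpoint-rule error and the antisymmetric remainder uniform over all cells, and then combining the $x$- and $y$-direction estimates without degrading the order, is where the technical work concentrates. Once these per-cell bounds are in place, summing over $i,j$ using the compact support of $\mu$ and $\|\phi\|_\Lip\le1$, and applying Grönwall, closes the argument exactly as in Lemma~\ref{lem:convrate}.
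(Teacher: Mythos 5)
Your proposal is correct and takes essentially the same route as the paper, which likewise proves Lemma \ref{lem:2Dtrunc} as a direct (if tedious) adaptation of Lemma \ref{lem:convrate}, singling out the self-interaction term as the only step needing new care. Your cancellation of $\partial_x W$ by reflection through the centre of the singular rectangle is exactly the paper's device of splitting that rectangle into four parts whose diagonally opposing pairs cancel, up to $O(\Dx^2+\Dy^2)$, by the antisymmetry of $\nabla W$.
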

\begin{proof}
The proof is a straightforward, but tedious, adaptation of Lemma \ref{lem:convrate}. The step in that proof that uses the antisymmetry of $W'$ carries over to this case by splitting the rectangle $\cell_i$ into four parts, diagonally opposing pairs of which cancel (up to $O(\Dx^2+\Dy^2)$) due to the antisymmetry of $W$.
\end{proof}

Let us finally remark that the generalization to an arbitrary number of dimensions of the previous scheme is a straightforward extension of the scheme presented here.


\begin{figure}[ht!]
\centering
\subfigure[Initial data.]{
\includegraphics[width=0.304\textwidth]{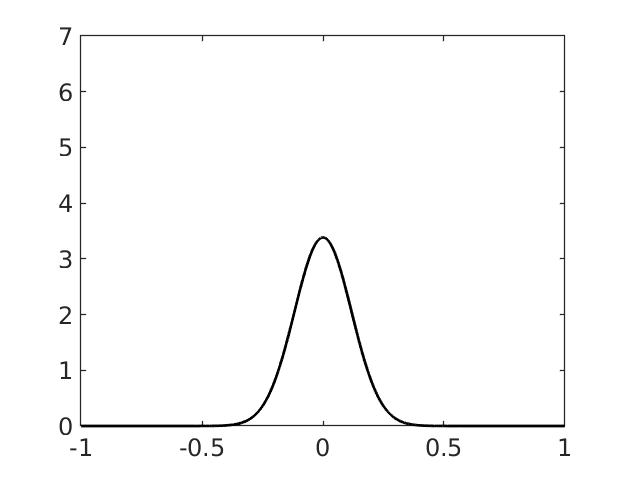}}\subfigure[$t=0.075$]
{\includegraphics[width=0.30\textwidth]{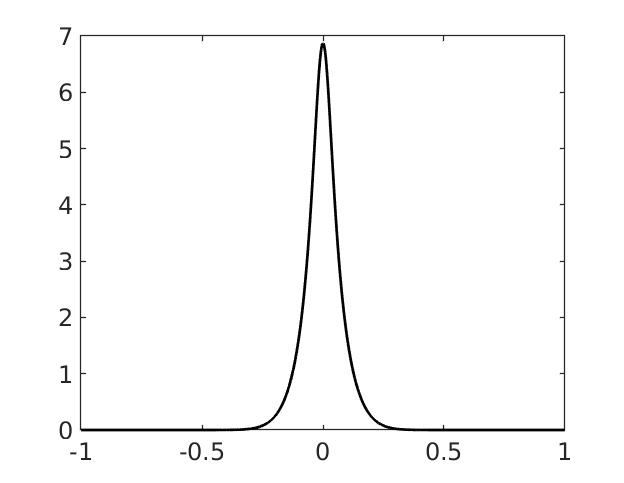}}
\caption{Initial data \eqref{eq:smooth_init} and corresponding solution of \eqref{eq:agg_eq_1d} with $W(x)=|x|$ at $t=0.075$.}\label{fig:simulation1D}
\end{figure}

\section{Numerical simulations}
\noindent We provide several numerical examples to examine the performance of the numerical scheme developed in this paper. We compare it to two numerical schemes: the first-order Lax--Friedrichs type scheme in \cite{CJLV16} and the first-order upwind type scheme in \cite{DLV2017}, which will be denoted as \emph{1st LxF} and \emph{1st upw} respectively. Section \ref{sub:1D} is devoted to numerical simulations of the one-dimensional scheme \eqref{eq:numericalmethod_1d} with a main focus on the convergence rates. In Section \ref{sub:2D} we study the two-dimensional scheme \eqref{eq:laxfr2D} and qualitatively compare it to the first-order schemes.

\subsection{Experiments in 1D}\label{sub:1D}
In this section the convergence rate of \eqref{eq:numericalmethod_1d} is addressed through different examples. We also provide a few examples to study the qualitative behavior. In all the numerical experiments the CFL number is set to $0.4$ and $c^n = \|W\|_\Lip$ for all $n$. A third-order SSP Runge--Kutta method is used to integrate in time, see \cite{GST01}.

\subsubsection{Smooth initial data}
We give an example to numerically verify the second-order convergence rate of \eqref{eq:numericalmethod_1d} for smooth enough data by considering approximations of \eqref{eq:agg_eq_1d} using \eqref{eq:numericalmethod_1d} with initial data
\begin{equation}\label{eq:smooth_init} 
\rho^0 =  \frac{1}{\sqrt{\pi}} \exp(-36x^2),
\end{equation} 
see Figure \ref{fig:simulation1D}(a).
 
The convergence rates can be found in Tables \ref{tab:smooth3} and \ref{tab:smooth6}. In Table \ref{tab:smooth3} we consider the numerical approximation with the attractive potential $W(x)=|x|$ at a time before blow-up of the solution, see Figure \ref{fig:simulation1D}(b). The numerical approximation is compared to a reference solution found by approximating the solution of Burgers' equation \eqref{eq:burgers} using a second-order method on a very fine grid and then differentiating the solution at the level of \eqref{eq:burgers}. We can see that the second-order method \eqref{eq:numericalmethod_1d} converges at rate close to 2 using either of the fluxes \eqref{eq:diffusiondef} (2nd LxF) and \eqref{eq:upwinddef} (2nd upw). This is clearly an improvement over the rates of the first-order methods. 

\begin{table}[ht!]
\centering
\small
\begin{tabular}{|c|c|c|c|c|c|c|c|c|}
\cline{2-9}
\multicolumn{1}{c}{} & \multicolumn{2}{|c|}{1st LxF} & \multicolumn{2}{|c|}{1st upw} & \multicolumn{2}{|c|}{2nd LxF} & \multicolumn{2}{|c|}{2nd upw} \\
\hline 
$ n $ & $d_1$ & OOC & $d_1$ & OOC & $d_1$ & OOC &  $d_1$ & OOC \\ 
\hline 
$ 32 $   & $ 1.66e-02 $ &           & $ 7.44e-03 $ &           & $ 5.38e-03 $ &           & $ 3.08e-03 $ &  \\ 
$ 64 $   & $ 9.77e-03 $ & $ 0.77 $ & $ 4.63e-03 $ & $ 0.68 $ & $ 1.49e-03 $ & $ 1.85 $ & $ 1.22e-03 $ & $ 1.34 $ \\ 
$ 128 $  & $ 5.23e-03 $ & $ 0.90 $ & $ 2.95e-03 $ & $ 0.65 $ & $ 4.94e-04 $ & $ 1.60 $ & $ 4.07e-04 $ & $ 1.58 $ \\ 
$ 256 $  & $ 2.70e-03 $ & $ 0.96 $ & $ 1.75e-03 $ & $ 0.75 $ & $ 1.50e-04 $ & $ 1.72 $ & $ 1.13e-04 $ & $ 1.85 $\\ 
$ 512 $  & $ 1.37e-03 $ & $ 0.98 $ & $ 9.54e-04 $ & $ 0.88 $ & $ 4.15e-05 $ & $ 1.85 $ & $ 2.92e-05 $ & $ 1.95 $ \\ 
$ 1024 $ & $ 6.88e-04 $ & $ 0.99 $ & $ 4.97e-04 $ & $ 0.94 $ & $ 1.08e-05 $ & $ 1.94 $ & $ 7.32e-06 $ & $ 2.00 $ \\ 
\hline 
\end{tabular}
\caption{Convergence rates for $W(x)=|x|$ with the smooth initial data \eqref{eq:smooth_init} at $t=0.075$.}\label{tab:smooth3}
\end{table}


\begin{table}[ht!]
\centering
\small
\begin{tabular}{|c|c|c|c|c|c|c|c|c|}
\cline{2-9}
\multicolumn{1}{c}{} & \multicolumn{2}{|c|}{1st LxF} & \multicolumn{2}{|c|}{1st upw} & \multicolumn{2}{|c|}{2nd LxF} & \multicolumn{2}{|c|}{2nd upw} \\
\hline 
$ n $ & $d_1$ & OOC & $d_1$ & OOC & $d_1$ & OOC &  $d_1$ & OOC \\ 
 \hline 
$ 32 $  & $ 1.71e-02 $  &           & $ 6.59e-03 $  &           & $ 4.93e-03 $  &           & $ 2.54e-03 $ & \\ 
$ 64 $  & $ 9.72e-03 $  & $ 0.82 $ & $ 4.19e-03 $  & $ 0.65 $ & $ 1.34e-03 $  & $ 1.88 $ & $ 1.01e-03 $ & $ 1.32 $ \\ 
$ 128 $ & $ 5.01e-03 $ & $ 0.93 $ & $ 2.64e-03 $  & $ 0.67 $ & $ 4.59e-04 $  & $ 1.54 $ & $ 3.27e-04 $ & $ 1.63 $ \\ 
$ 256 $ & $ 2.57e-03 $  & $ 0.99 $ & $ 1.50e-03 $  & $ 0.81 $ & $ 1.37e-04 $  & $ 1.74 $ & $ 8.98e-05 $ & $ 1.86 $\\ 
$ 512 $ & $ 1.26e-03 $  & $ 1.03 $ & $ 7.84e-04 $  & $ 0.94 $ & $ 3.75e-05 $  & $ 1.87 $ & $ 2.30e-05 $ & $ 1.97 $ \\ 
$ 1024 $& $ 5.90e-04 $  & $ 1.09 $ & $ 3.80e-04 $  & $ 1.05 $ & $ 9.69e-06 $  & $ 1.95 $ & $ 5.73e-06 $ & $ 2.00 $  \\ 
\hline 
\end{tabular}
\caption{Convergence rates for $W(x)=1-\exp(-|x|)$ with the smooth initial data \eqref{eq:smooth_init} at $t=0.075$.}\label{tab:smooth6}
\end{table}

We observe similar convergence rates for interaction potentials where we do not have the equivalence between solutions of \eqref{eq:agg_eq_1d} with $W(x)=\pm|x|$ and \eqref{eq:burgers}, see Table \ref{tab:smooth6}. Here the reference solutions are computed with the respective numerical schemes on a grid consisting of $2^{13}$ cells.

\subsubsection{Measure valued initial data}
\begin{table}
\centering
\begin{tabular}{|c|c|c|c|c|}
\hline 
$ n $ & $d_1$ & OOC & $d_1$ & OOC\\ 
\hline  
$ 32 $ & $ 2.17e-02 $ & & $ 2.29e-02 $ &  \\ 
$ 64 $ & $ 1.23e-02 $ & $ 0.83 $ & $ 1.17e-02 $ & $ 0.97 $ \\ 
$ 128 $ & $ 5.96e-03 $ & $ 1.04 $ & $ 5.11e-03 $ & $ 1.20 $ \\ 
$ 256 $ & $ 3.10e-03 $ & $ 0.95 $ & $ 3.01e-03 $ & $ 0.76 $ \\ 
$ 512 $ & $ 1.49e-03 $ & $ 1.06 $ & $ 1.53e-03 $ & $ 0.98 $ \\ 
$ 1024 $ & $ 7.88e-04 $ & $ 0.92 $ & $ 7.39e-04 $ & $ 1.05 $ \\ 
\hline 
\end{tabular}
\caption{Convergence rates of \eqref{eq:scheme_1d}, \eqref{eq:diffusiondef} with the Dirac initial data \eqref{eq:dirac_init} at $t=0.1$. Left: $W(x)=|x|$; right: $W(x)=1-\exp(-|x|)$.}
\label{tab:dirac1}
\end{table}

We check the convergence rate of the scheme \eqref{eq:scheme_1d}, \eqref{eq:diffusiondef} with potentials $W(x)=|x|$ and $W(x)=1-\exp(-|x|)$ in the case of measure valued initial data represented by the sum of two Dirac measures,
\begin{equation}\label{eq:dirac_init}
\rho^0 = \frac{1}{2}\big(\delta_{-0.5} + \delta_{0.5}\big).
\end{equation}
The numerical approximation is compared to the exact solution of \eqref{eq:agg_eq_1d} for $W(x)=|x|$ (which can be found by solving the corresponding particle system \eqref{eq:particlesys}). In the case $W(x)=1-\exp (-|x|)$ the reference solution is found by approximating the position of the Diracs using a very small timestep. Both reference solutions are projected onto the same grid as the numerical approximation. The numerical approximations converge at a rate of $\Dx$ in $d_1$, see Table \ref{tab:dirac1}. This is exactly what we expect in the case $W(x)=|x|$ as it corresponds to a rate of $\Dx$ in $L^1$ for two initial shocks at the level of Burgers' equation \eqref{eq:burgers}. See \cite{TZ97} and \cite{FS2016} for further results on convergence rates for conservation laws.


\subsubsection{A possible optimal convergence rate}
As observed in the previous section, a second-order convergence rate (in $d_1$) is not always achievable. Indeed, even in the case of (formally) first-order schemes, one does not always obtain a convergence rate of 1. Delarue, Lagouti\`ere and Vauchelet prove that their first-order upwind type scheme converges at a rate of $\hf$ in the 2-Wasserstein distance $d_2$ in \cite{DLV2017}. Furthermore, an example showing that this rate is optimal (in both $d_1$ and $d_2$) is provided: $W(x)=2x^2$ for $|x| \leq 1$, $W(x)=4|x|-2$ for $|x|>1$ and $\rho^0 = 0.5 \delta_{-0.25} +0.5 \delta_{0.25}$.
The exact solution of \eqref{eq:agg_eq_1d}, found by solving the corresponding particle system \eqref{eq:particlesys}, is
\begin{align*}
\rho(t) = \frac{1}{2}\big(\delta_{-x(t)} + \delta_{x(t)}\big), \qquad x(t) =0.25e^{-4t},
\end{align*}
in this case. Applying the second-order scheme \eqref{eq:numericalmethod_1d} to this example, the convergence rate improves to $\unitfrac{2}{3}$ and $\unitfrac{3}{4}$, see Figure \ref{fig:rate}. Even though the rate is far from 2, this suggests that the optimal rate of \eqref{eq:numericalmethod_1d} is somewhat higher than the one for similar first-order schemes.
\begin{figure}[ht!]
\centering
\includegraphics[width=0.45\textwidth]{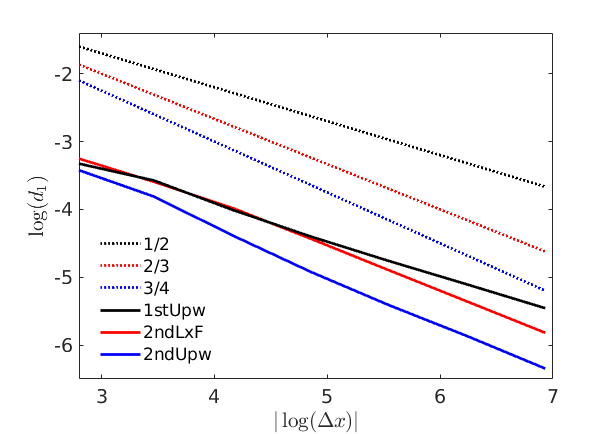}
\caption{Possible optimal $d_1$ convergence rates. Black: 1st upw from \cite{DLV2017}. Red: \eqref{eq:scheme_1d}, \eqref{eq:diffusiondef}. Blue: \eqref{eq:scheme_1d}, \eqref{eq:upwinddef}. Time $t=0.5$.}
\label{fig:rate}
\end{figure}

\subsubsection{Attractive-repulsive potentials}\label{sec:attrep1d}
In the case of the attractive-repulsive potential $$W(x)=\hf |x|^2-|x|,$$ it is known that the (unique) steady state solution of \eqref{eq:agg_eq} is $\hf\chi_{[-1,1]}$, where $\chi_{[-1,1]}$ is the characteristic of the interval $[-1,1]$, see for instance \cite{FR1,FR2,FHK2011}. Note that this potential satisfies conditions \ref{cond:lip} and \ref{cond:c1}, but not \ref{cond:convex}. Hence, from Theorem \ref{thrm:main} we only know that there is a subsequence of \eqref{eq:num_approx} converging to a $d_1$-weak measure solution $\rho$. We apply the schemes \eqref{eq:numericalmethod_1d} to $W(x)=\hf |x|^2-|x|$ and initial data
\begin{equation}\label{eq:cosinit}
\rho^0=\begin{cases}
\frac{\pi}{1.2}\cos\left(\frac{\pi}{0.6}x\right) & \text{if } -0.3 \leq x \leq 0.3, \\
0 & \textrm{otherwise},
\end{cases}
\end{equation}
to see if they converge (in $d_1$) to the right steady state solution. We also apply the schemes 1st LxF and 1st upw to the same test case for comparison.

\begin{table}[ht!]
\centering
\small 
\begin{tabular}{|c|c|c|c|c|c|c|c|c|}
\cline{2-9}
\multicolumn{1}{c}{} & \multicolumn{2}{|c|}{1st LxF} & \multicolumn{2}{|c|}{1st upw} & \multicolumn{2}{|c|}{2nd LxF} & \multicolumn{2}{|c|}{2nd upw} \\
\hline 
$ n $ & $d_1$ & OOC & $d_1$ & OOC & $d_1$ & OOC &  $d_1$ & OOC \\ 
\hline 
$ 32 $ & $ 5.71e-02 $ &  & $ 1.77e-03 $ &  & $ 1.69e-02 $ & & $ 1.66e-03 $ & \\ 
$ 64 $ & $ 3.01e-02 $ & $ 0.92 $ & $ 4.70e-04 $ & $ 1.92 $ & $ 7.12e-03 $ & $ 1.25 $ & $ 1.66e-04 $ & $ 3.32 $ \\ 
$ 128 $ & $ 1.54e-02 $ & $ 0.97 $ & $ 1.19e-04 $ & $ 1.98 $ & $ 2.89e-03 $ & $ 1.30 $ & $ 1.84e-05 $ & $ 3.18 $ \\ 
$ 256 $ & $ 7.75e-03 $ & $ 0.99 $ & $ 3.00e-05 $ & $ 1.99 $ & $ 1.14e-03 $ & $ 1.34 $ & $ 3.15e-06 $ & $ 2.54 $ \\ 
$ 512 $ & $ 3.89e-03 $ & $ 0.99 $ & $ 7.51e-06 $ & $ 2.00 $ & $ 4.56e-04 $ & $ 1.32 $ & $ 7.21e-07 $ & $ 2.13 $ \\ 
$ 1024 $ & $ 1.95e-03 $ & $ 1.00 $ & $ 1.88e-06 $ & $ 2.00 $ & $ 1.81e-04 $ & $ 1.34 $ & $ 1.44e-07 $ & $ 2.32 $ \\ 
\hline 
\end{tabular}
\caption{Convergence rates for $W(x)=\hf |x|^2-|x|$ with initial data \eqref{eq:smooth_init} at $t=20$.}\label{tab:attrep1d}
\end{table}

The two Lax--Friedrichs type schemes exhibit different convergence rates and steady states than the two upwind type schemes, see Table \ref{tab:attrep1d} and Figure \ref{fig:attrep1d}. Considering the convergence rates in Table \ref{tab:attrep1d}, the upwind schemes are superior to the LxF schemes. The 1st upw scheme converges towards the steady state at a rate close to 2 and the 2nd upw scheme at a rate between 2 and 3, whereas 1st LxF converges at a rate of 1 and 2nd LxF at a rate of $1.33$. But, oscillations can be observed in both upwind schemes (see Figure \ref{fig:attrep1d}(b)), more so in the first-order scheme than in the second-order one. Oscillations are not observed for the LxF schemes, see Figure \ref{fig:attrep1d}(a) (although the 2nd LxF solution contains overshoots). The oscillations in the upwind schemes perturb very little mass compared to the LxF schemes, which explains why the upwind approximations are better approximations to $0.5\chi_{[-1,-1]}$ in the $d_1$ sense.

\begin{figure}
\centering
\subfigure[Lax--Friedrichs type schemes]{
\includegraphics[width=0.42\textwidth]{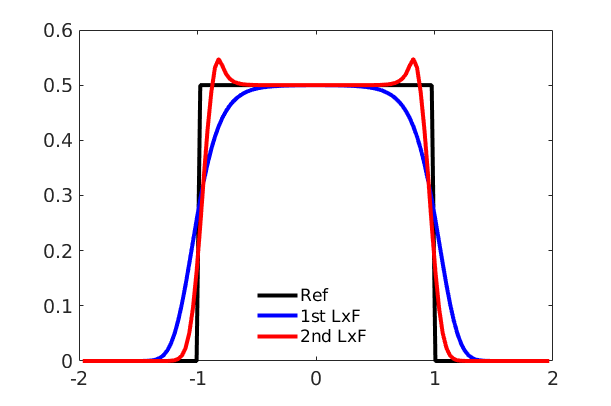}}
\subfigure[Upwind type schemes]{
\includegraphics[width=0.39\textwidth]{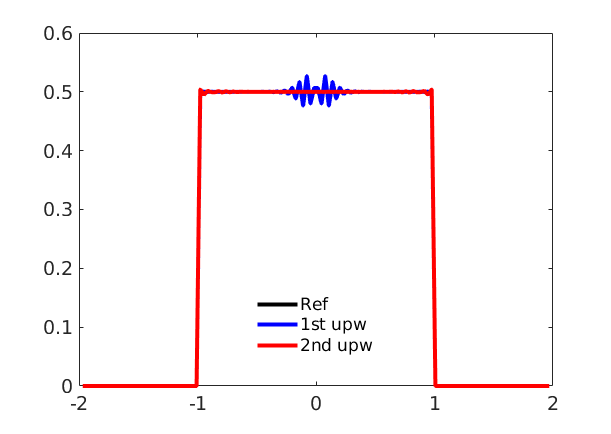}}
\caption{The four numerical schemes approximating $0.5\chi_{[-1,-1]}$ with $128$ cells for $W(x)=\hf |x|^2-|x|$ at $t=20$.}\label{fig:attrep1d}
\end{figure}

Next, we consider a potential that is fully covered by Theorem \ref{thrm:main}, $W(x)=\unitfrac{1}{3}|x|^3-\unitfrac{1}{2}|x|^2$. This potential is related to the scaled granular media equation studied in \cite{BCP,CV2002,CMV2003,CMV2006} for which the convergence as time goes to $\infty$ towards the homogeneous cooling state, whose profile is given by two Diracs located symmetrically about the center of mass separated by length 1, is known. We divide the interval $[-1,1]$ into 256 cells and consider the initial data \eqref{eq:cosinit}, see Figure \ref{fig:simulation1D}(a). As Figure \ref{fig:x3-x2} depicts, both the 2nd LxF and the 2nd upw scheme converge to the expected stationary solution.
\begin{figure}[ht]
\centering
\subfigure[2nd LxF]{\includegraphics[width=0.38\textwidth]{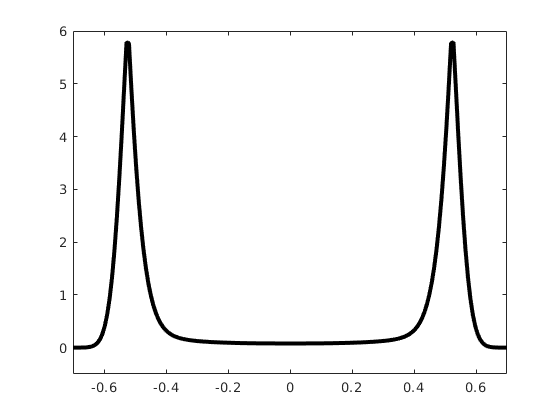}}
\subfigure[2nd upw]{\includegraphics[width=0.38\textwidth]{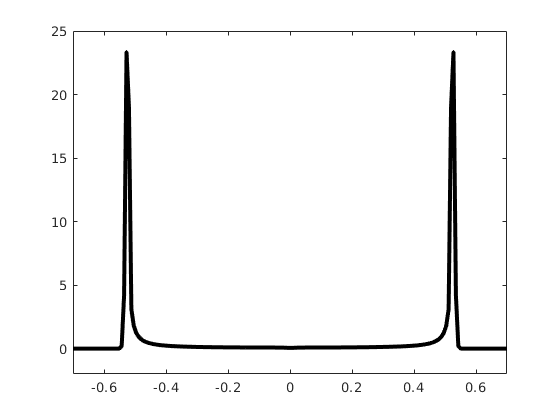}}
\caption{Numerical simulations of \eqref{eq:numericalmethod_1d} with $W(x)=\unitfrac{1}{3}|x|^3-\unitfrac{1}{2}|x|^2$ at $t=50$.}
\label{fig:x3-x2}
\end{figure}

Lastly, we study the 1D numerical method \eqref{eq:numericalmethod_1d} with a potential that is more singular than the ones satisfying \ref{cond:lip}, \ref{cond:c1}, and is therefore not covered by the theory in this paper, $W(x)=\unitfrac{1}{2}|x|^2-\log |x|$. Even though $W$ is more singular in this example than in the previous one, the solution is expected to converge to a steady state that is more regular, the half-ellipse $\sqrt{2-x^2}/\pi$, as can be seen from the results in \cite{CFP}. The initial data and the numerical solution using the 2nd LxF scheme \eqref{eq:scheme_1d}, \eqref{eq:diffusiondef} at $t=20$ are depicted in Figure \ref{fig:xx-log}, where the numerical solution clearly resembles the halfcircle. 
The 2nd upw scheme \eqref{eq:scheme_1d}, \eqref{eq:upwinddef} does not perform well in this case, with severe oscillations. This and the results above suggest that to get a qualitatively good numerical approximation, one has to choose a flux depending on the type of solution that one expects.

\begin{figure}[ht]
\centering
\subfigure[Initial data]{\includegraphics[width=0.38\textwidth]{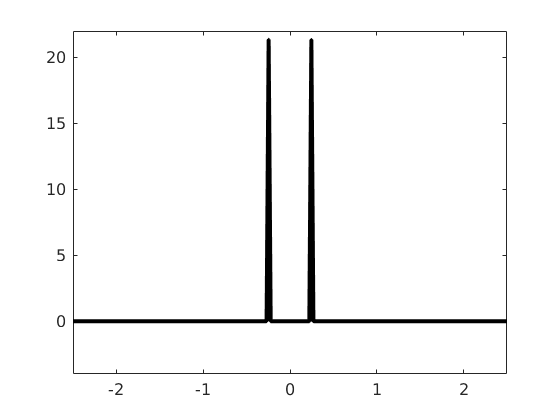}}
\subfigure[Solution at $t=20$]{\includegraphics[width=0.38\textwidth]{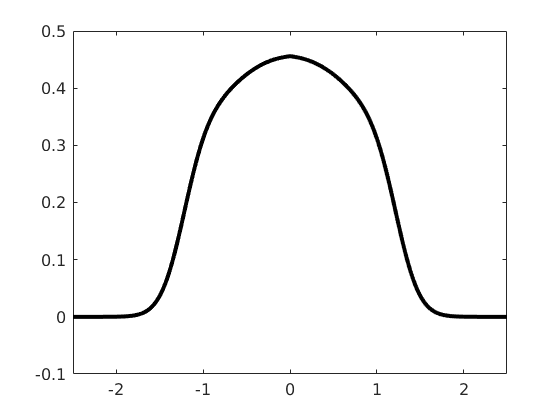}}
\caption{Convergence to a half-ellipse for 2nd LxF with $W(x)=\unitfrac{1}{2}|x|^2-\log (|x|)$.}
\label{fig:xx-log}
\end{figure}

It can be observed in the last three figures that the numerical method preserves the center of mass, as expected due to Lemma \ref{lem:cfl} {\it (v)}.

\subsection{Experiments in 2D}\label{sub:2D} 
We test and compare \eqref{eq:laxfr2D} to 1st LxF and 1st upw. In all numerical experiments in this section the CFL number is set to $0.2$, $c^n = \|W\|_\Lip$ and $\Dx=\Dy$, and the grid is split into $256\times256$ cells unless otherwise stated. Heun's method is used to integrate in time. Let 
\begin{align}\label{eq:oneblob}
b(x,y,x_0,y_0, C) := \exp \bigl(-C(x-x_0)^2-C(y-y_0)^2\bigr).
\end{align}
We will consider two initial data: one ``blob'' $\rho^0(x,y)=\frac{1}{M} b(x,y,x_0,y_0, 10)$ centered at $(x_0,y_0)$, where $b$ is defined in \eqref{eq:oneblob} (see Figure \ref{fig:initial2D}(a)), and three ``blobs'' 
\begin{equation}\label{eq:threeblobs}
\begin{split}
\rho^0=\frac{1}{M}\Big(b\big(x,y,\unitfrac{1}{4}, \unitfrac{1}{3}, 100\big) + b\big(x,y,0.8,0.7,100\big) + 0.9b\big(x,y,0.4,0.6,100\big)\Big),
\end{split}
\end{equation} 
see Figure \ref{fig:initial2D}(b). The constant $M$ normalizes the mass of $\rho^0$ to 1 in each case.
\begin{figure}[ht!]
\centering
\subfigure[One blob centered at $(1,1)$.]{
\includegraphics[trim={0 4em 0 4em}, clip, width=0.35\textwidth]{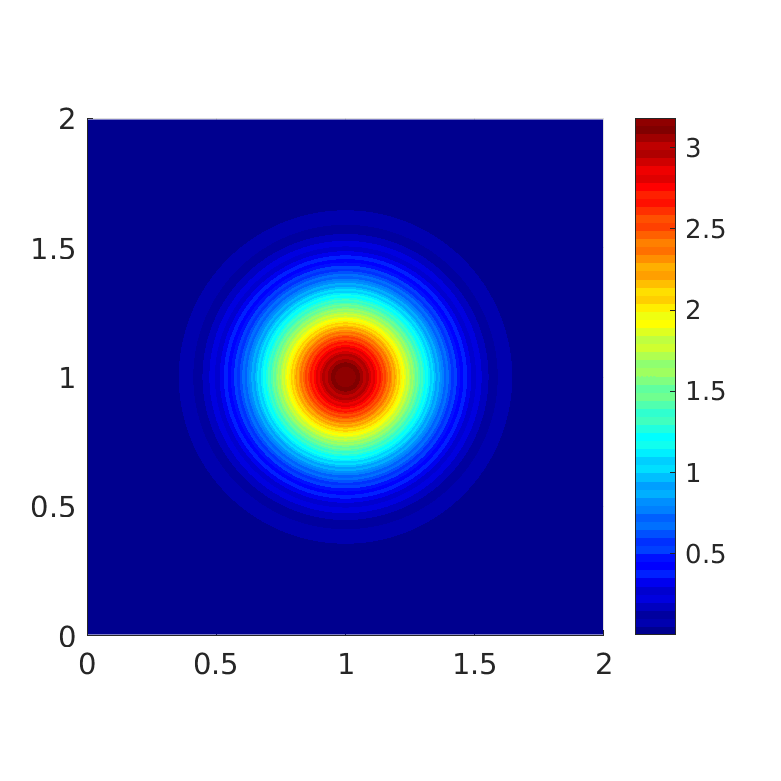}}
\subfigure[Three blobs.]{
\includegraphics[trim={0 4em 0 4em}, clip, width=0.35\textwidth]{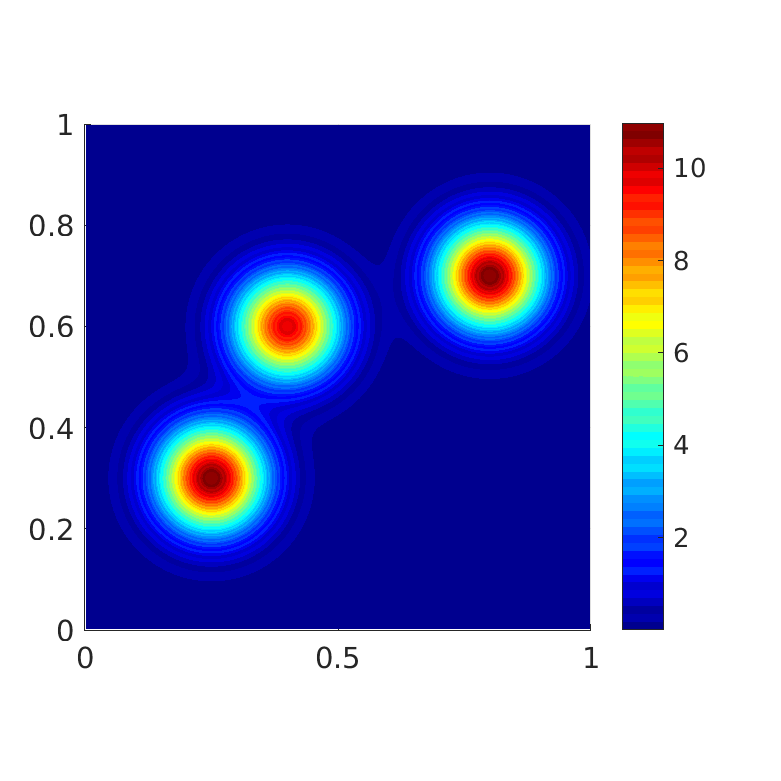}}
\caption{Initial data.}\label{fig:initial2D}
\end{figure}

\begin{figure}[ht!]
\centering
\subfigure[$t=1M$]{\hspace*{-2em}
\includegraphics[trim={3em 0 0 0}, clip, width=0.29\textwidth]{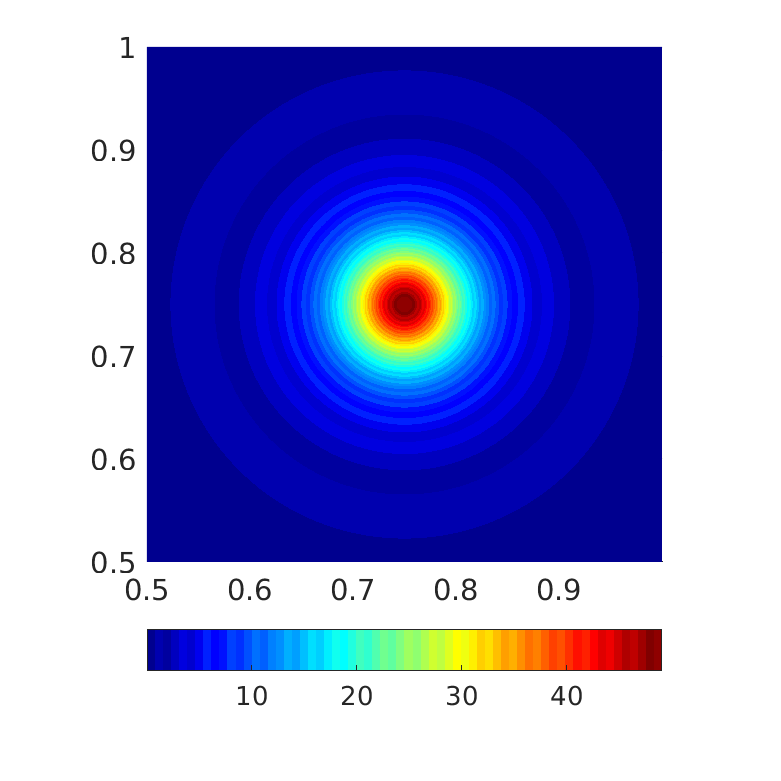}\hspace*{-2.1em}
\includegraphics[trim={3em 0 0 0}, clip, width=0.29\textwidth]{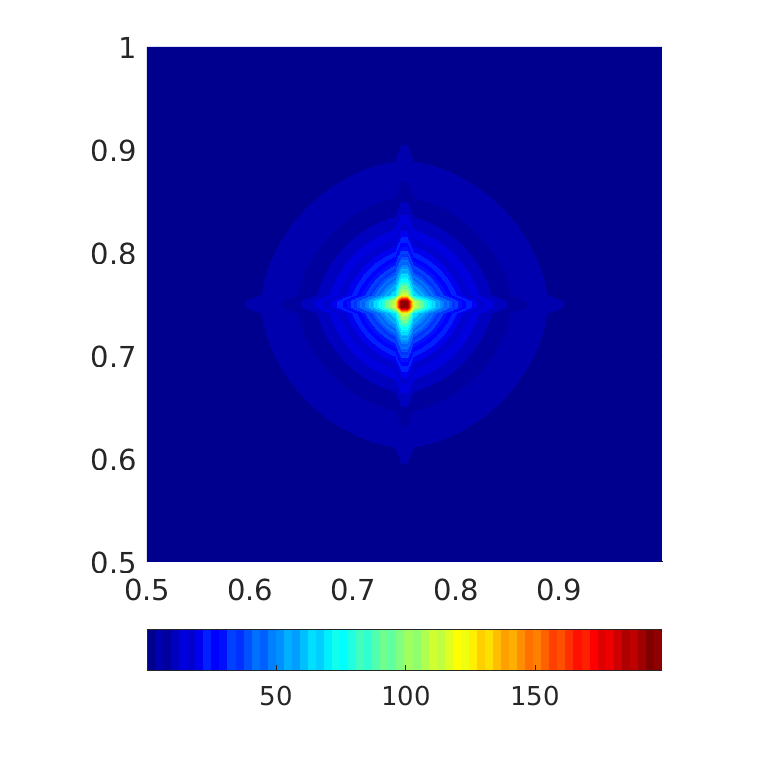}\hspace*{-2.1em}
\includegraphics[trim={3em 0 0 0}, clip, width=0.29\textwidth]{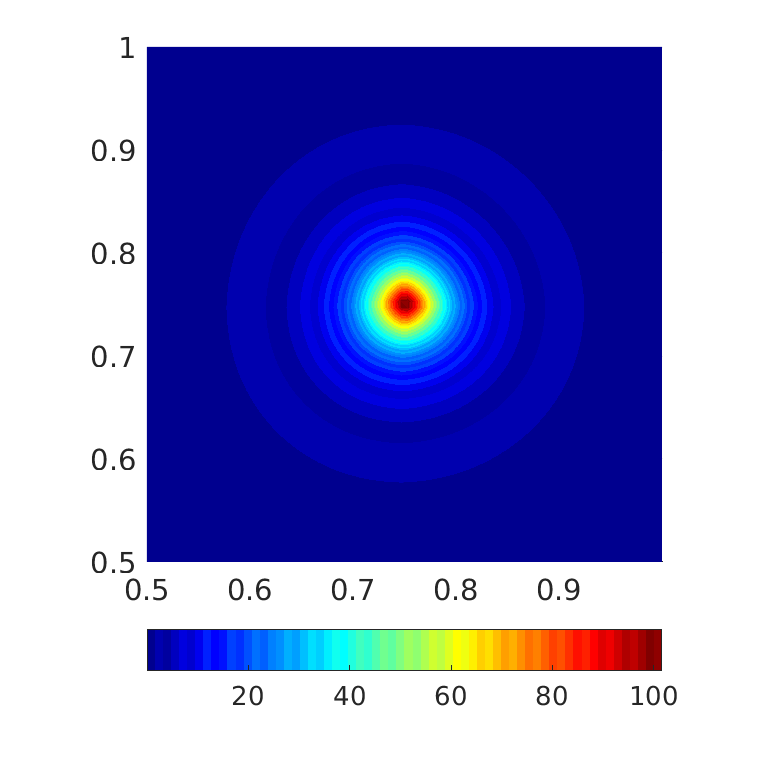}\hspace*{-2.1em}
\includegraphics[trim={3em 0 0 0}, clip, width=0.29\textwidth]{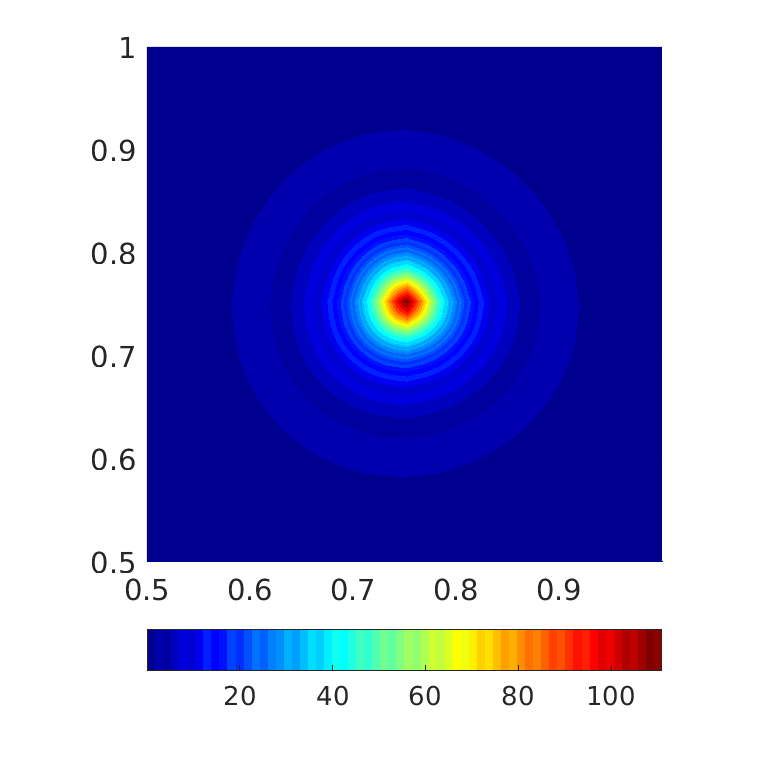}\hspace*{-2.1em}}
\subfigure[$t=2.5M$]{\hspace*{-2em}
\includegraphics[trim={3em 0 0 0}, clip, width=0.29\textwidth]{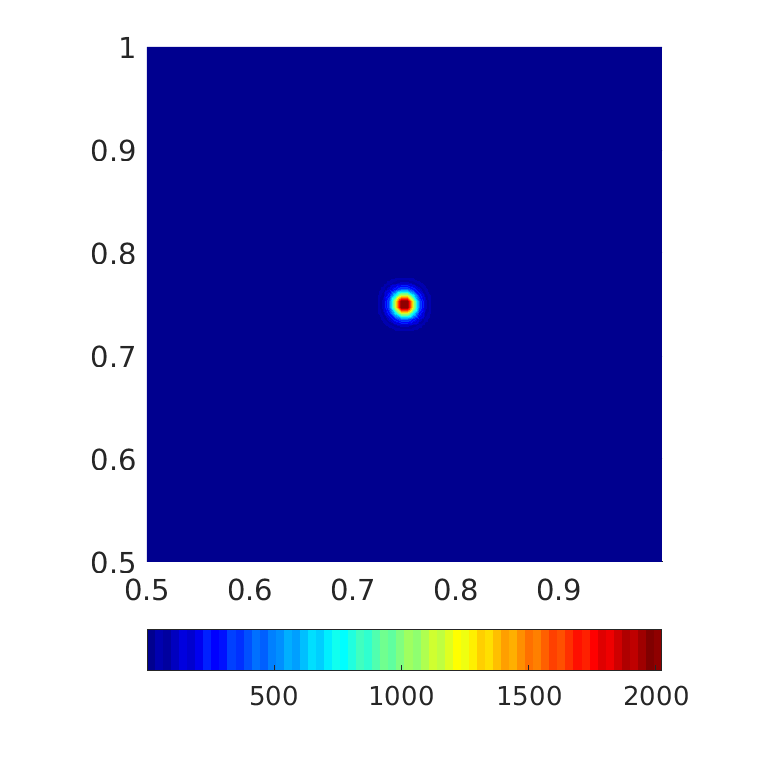}\hspace*{-2.1em}
\includegraphics[trim={3em 0 0 0}, clip, width=0.29\textwidth]{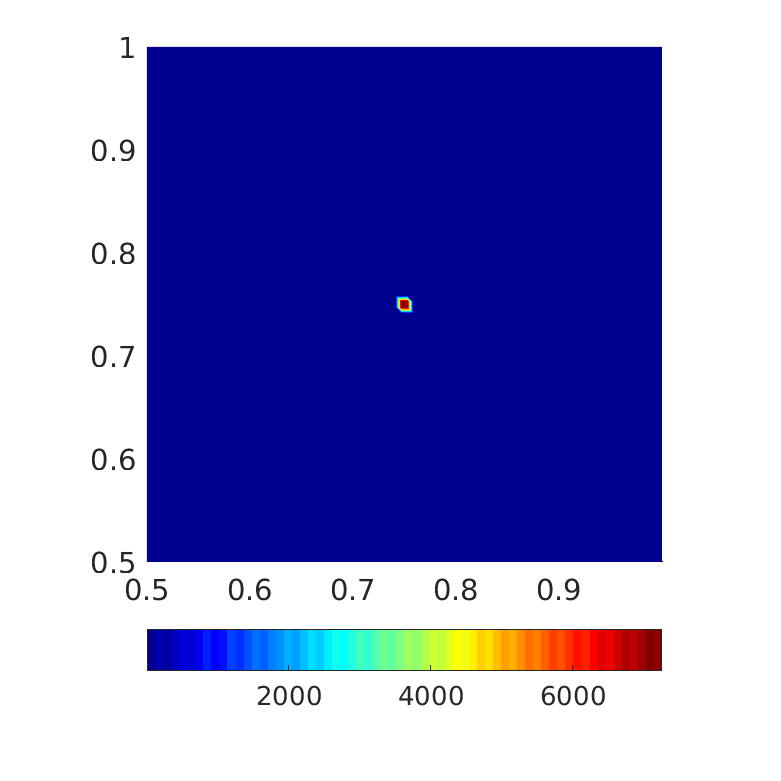}\hspace*{-2.1em}
\includegraphics[trim={3em 0 0 0}, clip, width=0.29\textwidth]{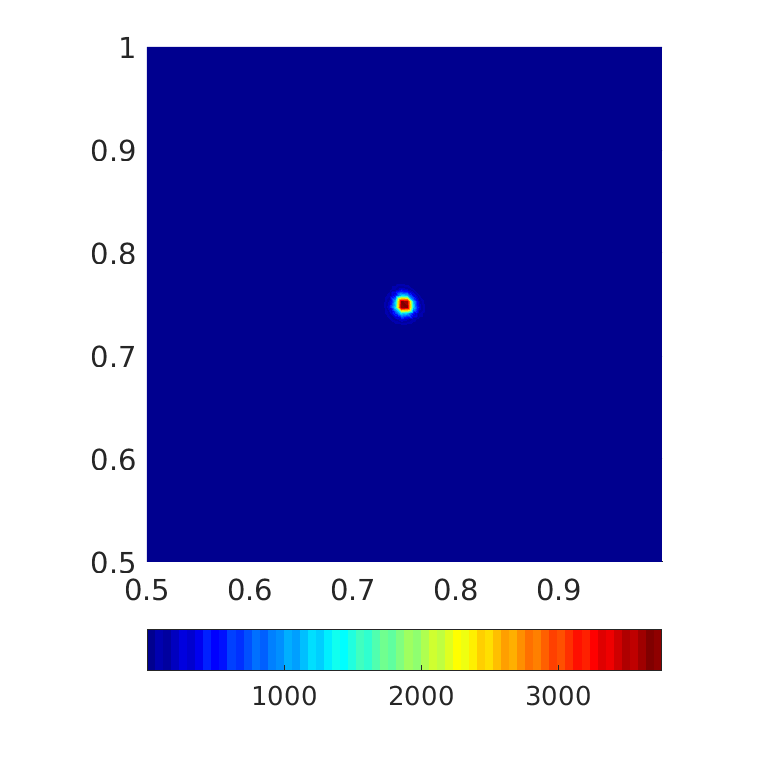}\hspace*{-2.1em}
\includegraphics[trim={3em 0 0 0}, clip, width=0.29\textwidth]{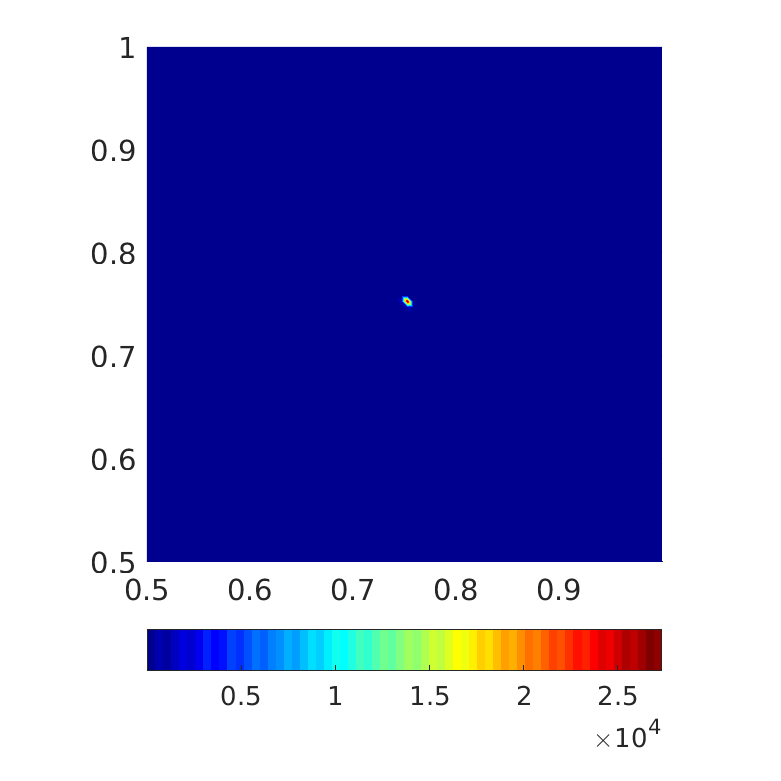}\hspace*{-2.1em}}
\caption{Comparison of all four schemes with $W(\bx)=|\bx|$ and one blob centered at $\bx_0=\big(\unitfrac{3}{4},\unitfrac{3}{4}\big)$ as initial data. From left to right: 1st order LxF, 1st order upwind, 2nd order LxF, 2nd order upwind. The normalization factor is $M=0.3137$.}\label{fig:oneblobsim}
\end{figure}

\subsubsection{Attractive potential}
We start with the simple case of one blob as initial data, $$\rho^0(x,y)=\frac{1}{M}b(x,y,0.75,0.75,10),$$ and study the dynamics for the potential $W(\bx)=|\bx|$, see Figure \ref{fig:oneblobsim}. As expected, in all four cases the mass of the blob has aggregated into a (very) small area in Figure \ref{fig:oneblobsim}(b), and the LxF schemes are more diffusive than the upwind schemes. Also, the second order schemes and the 1st LxF scheme exhibit radially symmetric solutions, see Figure \ref{fig:oneblobsim}(a), but the 1st upw scheme is only axially symmetric. 

\begin{figure}[ht!]
\centering
\subfigure[1st LxF]{
\includegraphics[trim={3em 0 0 0}, clip, width=0.29\textwidth]{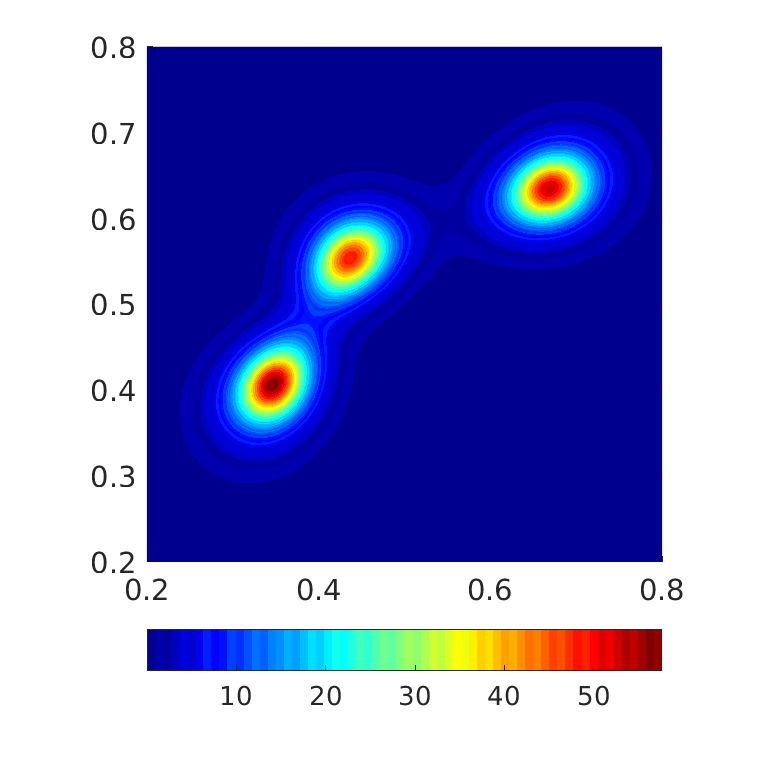}\hspace*{-1.5em}
\includegraphics[trim={3em 0 0 0}, clip, width=0.29\textwidth]{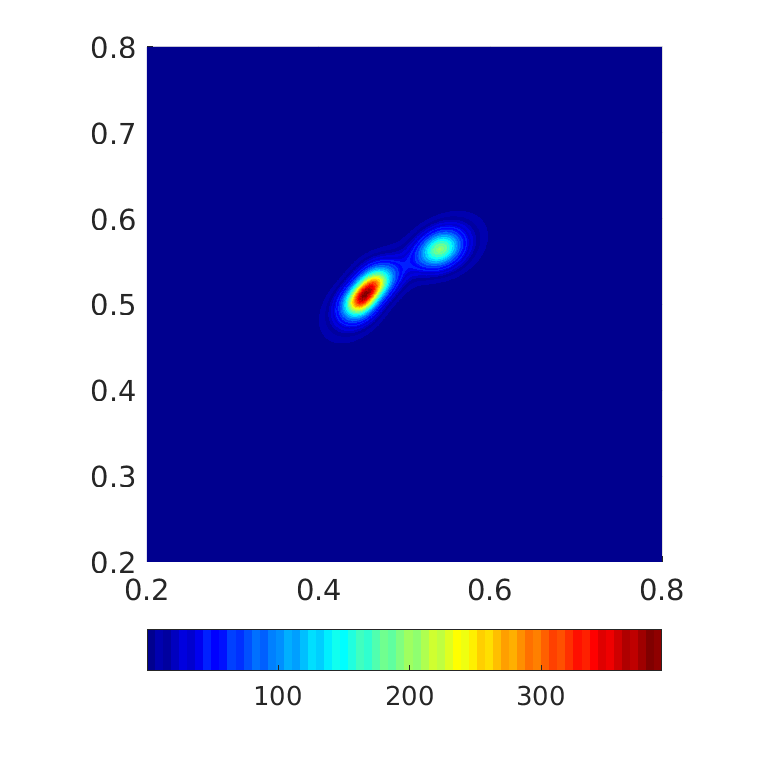}\hspace*{-1.5em}
\includegraphics[trim={3em 0 0 0}, clip, width=0.29\textwidth]{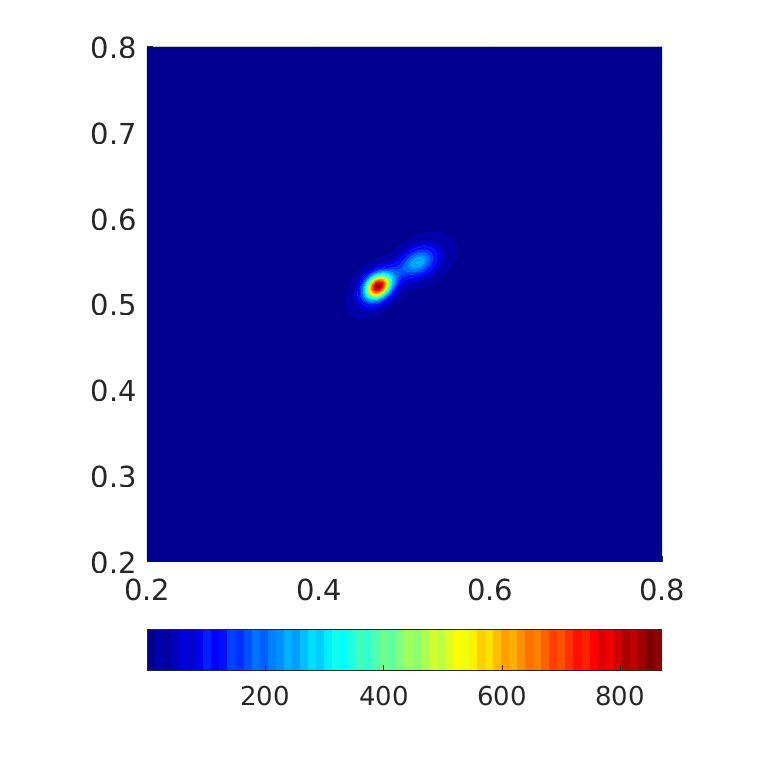}\hspace*{-1.5em}
\includegraphics[trim={3em 0 0 0}, clip, width=0.29\textwidth]{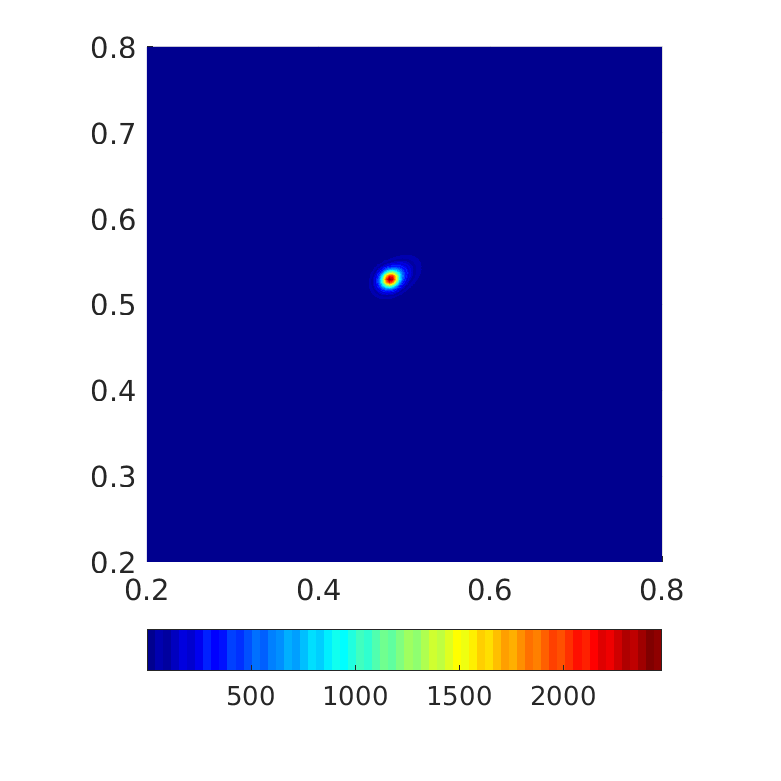}\hspace*{-1.5em}}
\subfigure[1st upw]{
\includegraphics[trim={3em 0 0 0}, clip, width=0.29\textwidth]{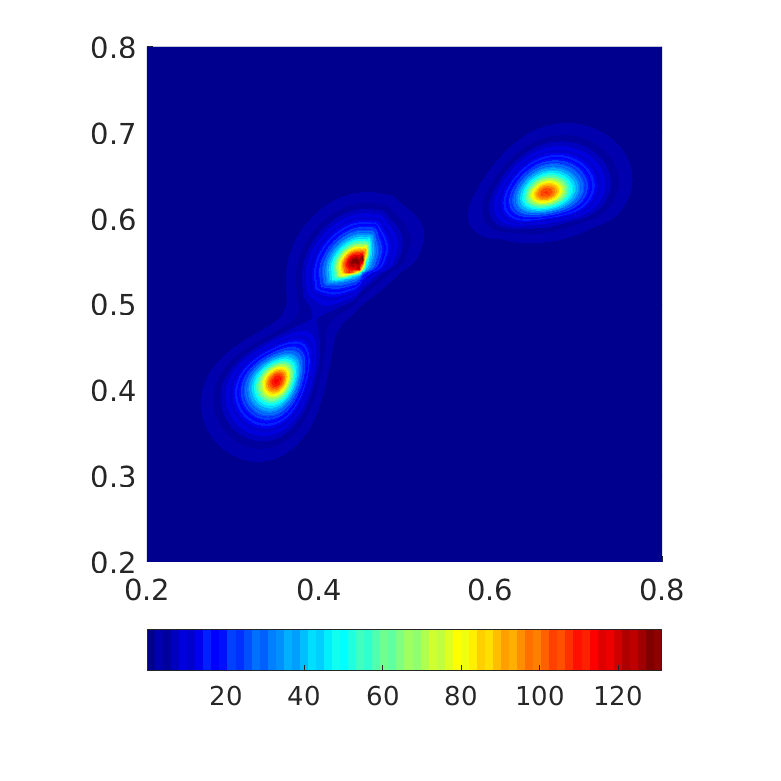}\hspace*{-1.5em}
\includegraphics[trim={3em 0 0 0}, clip, width=0.29\textwidth]{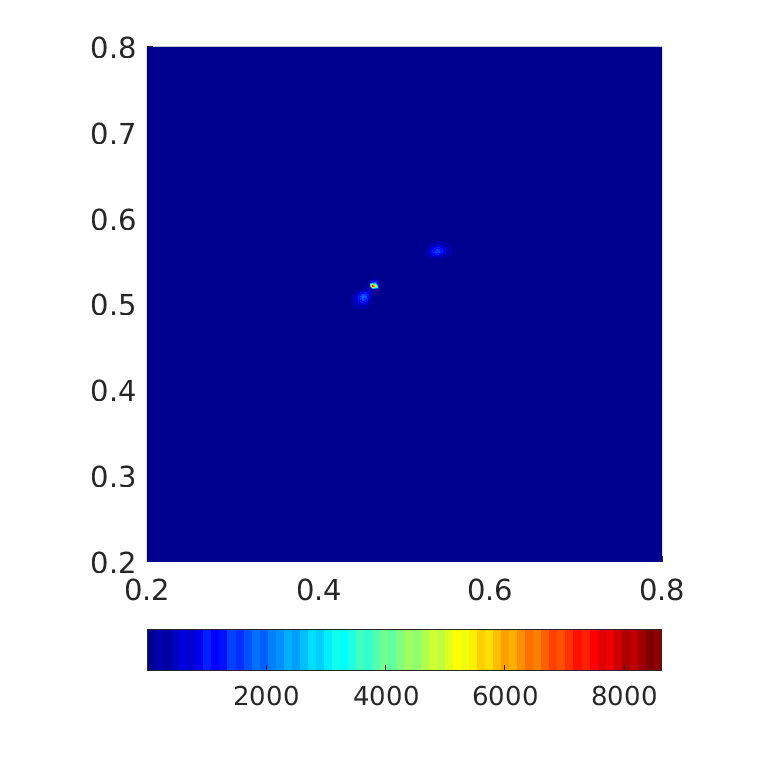}\hspace*{-1.5em}
\includegraphics[trim={3em 0 0 0}, clip, width=0.29\textwidth]{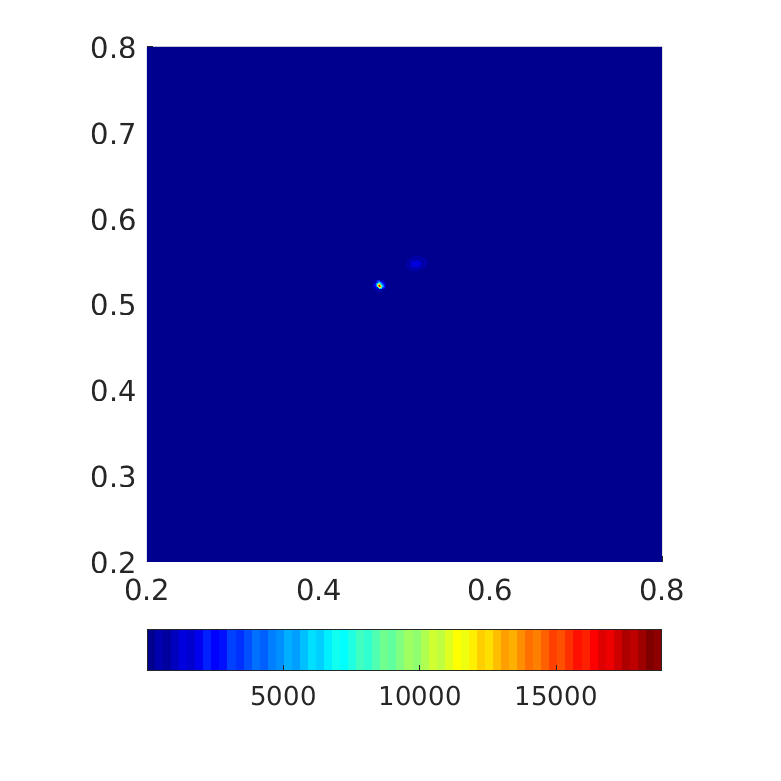}\hspace*{-1.5em}
\includegraphics[trim={3em 0 0 0}, clip, width=0.29\textwidth]{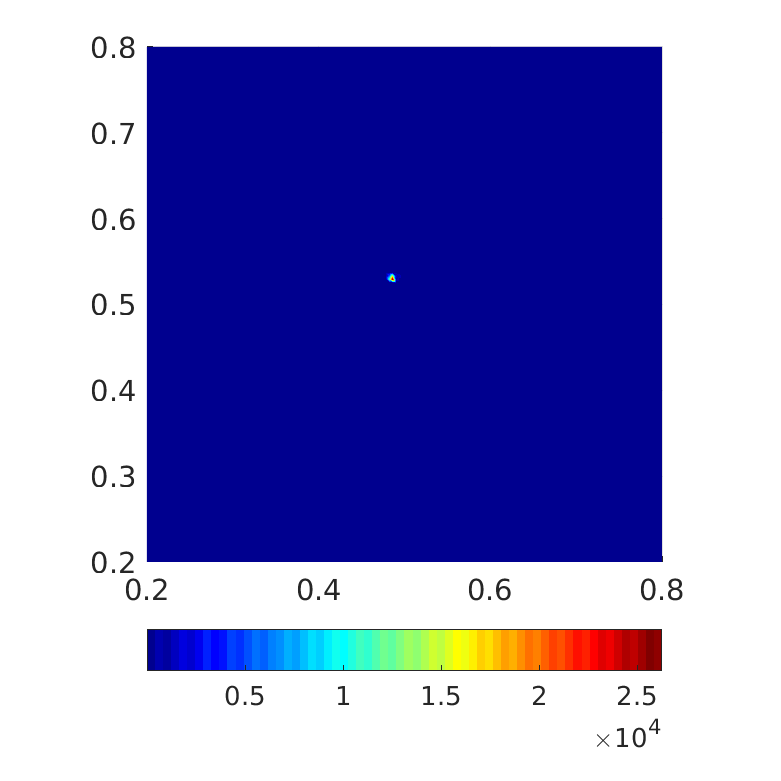}\hspace*{-1.5em}}
\subfigure[2nd LxF]{
\includegraphics[trim={3em 0 0 0}, clip, width=0.29\textwidth]{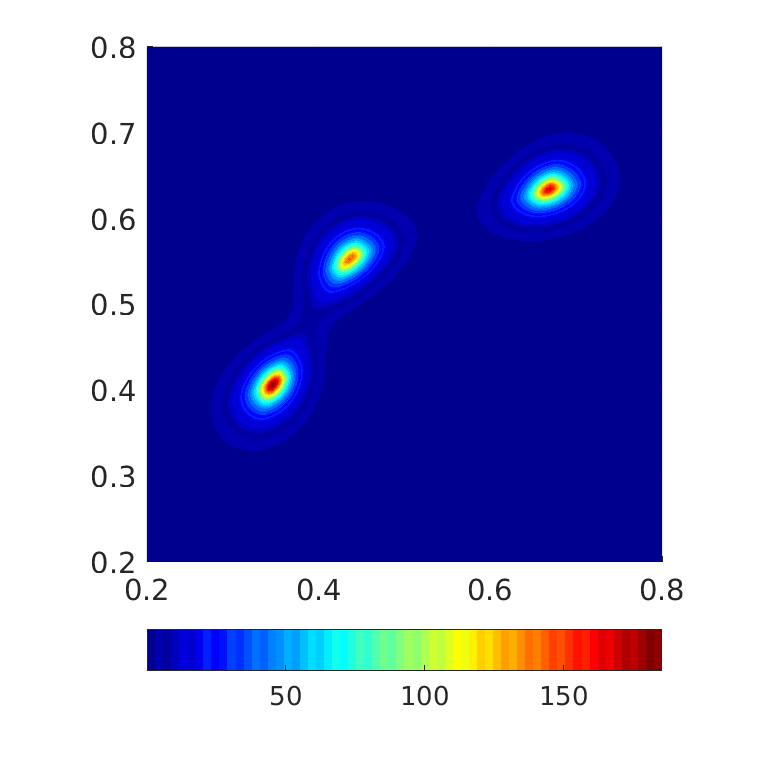}\hspace*{-1.5em}
\includegraphics[trim={3em 0 0 0}, clip, width=0.29\textwidth]{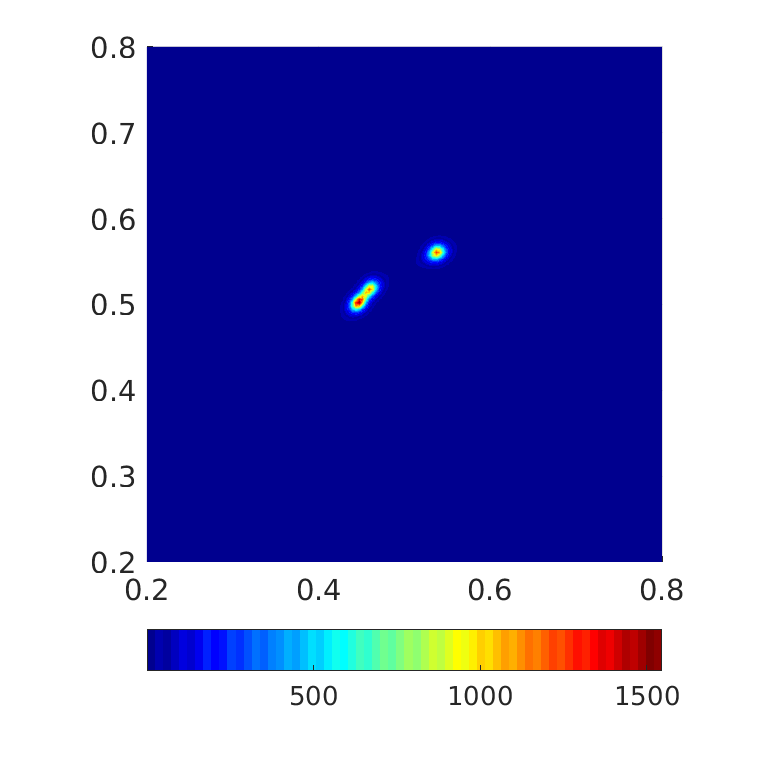}\hspace*{-1.5em}
\includegraphics[trim={3em 0 0 0}, clip, width=0.29\textwidth]{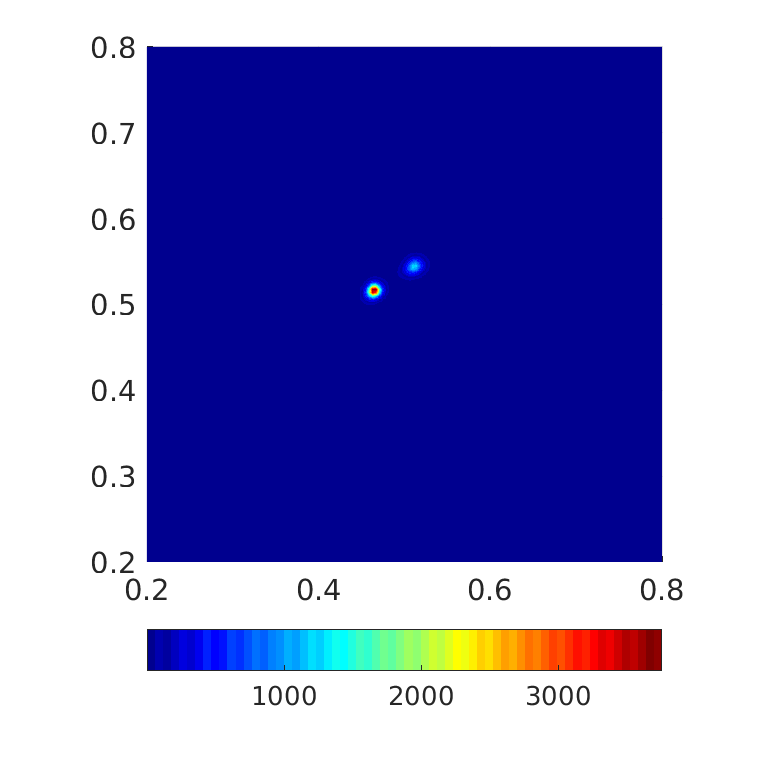}\hspace*{-1.5em}
\includegraphics[trim={3em 0 0 0}, clip, width=0.29\textwidth]{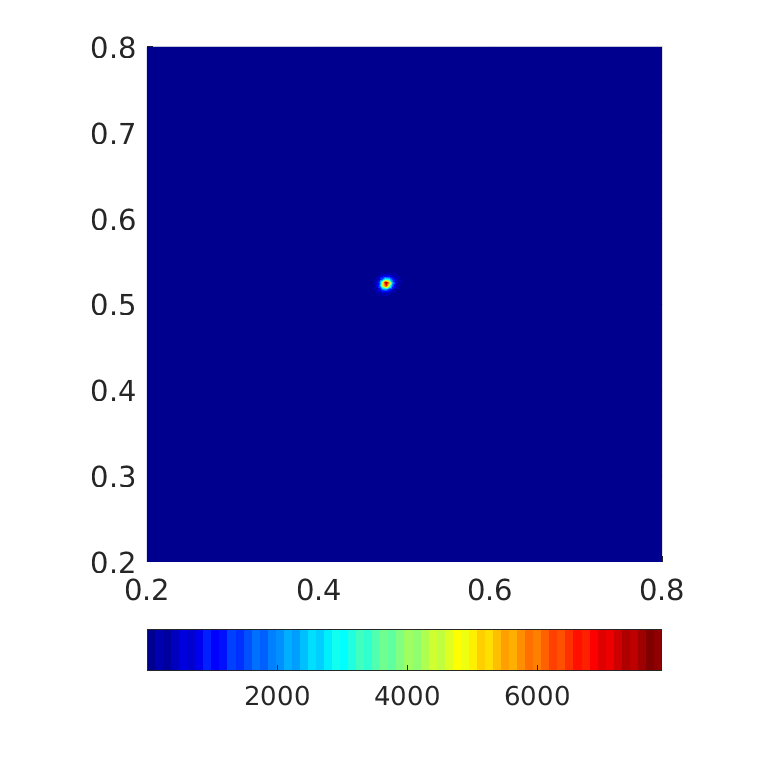}\hspace*{-1.5em}}
\subfigure[2nd upw]{
\includegraphics[trim={3em 0 0 0}, clip, width=0.29\textwidth]{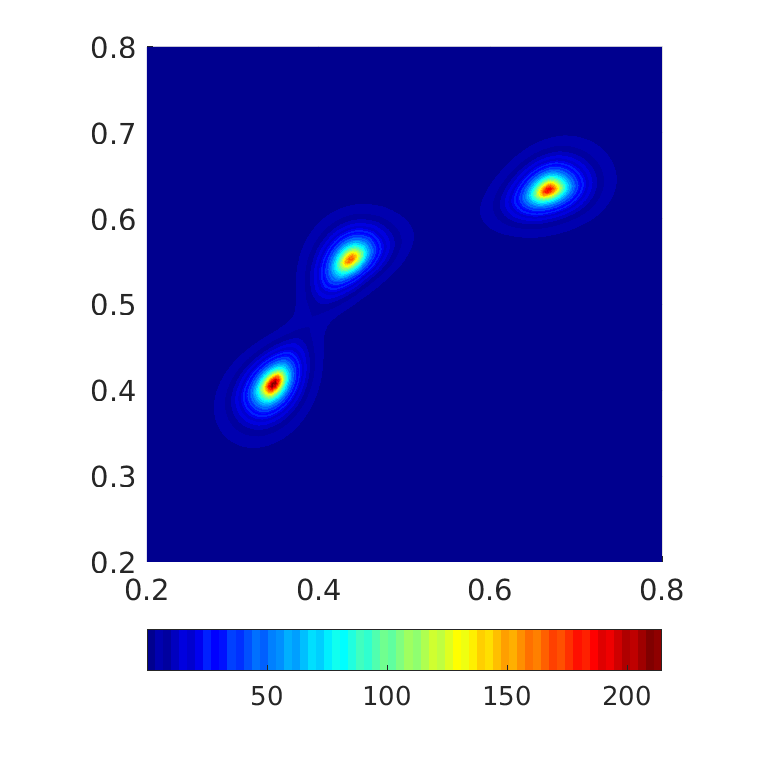}\hspace*{-1.5em}
\includegraphics[trim={3em 0 0 0}, clip, width=0.29\textwidth]{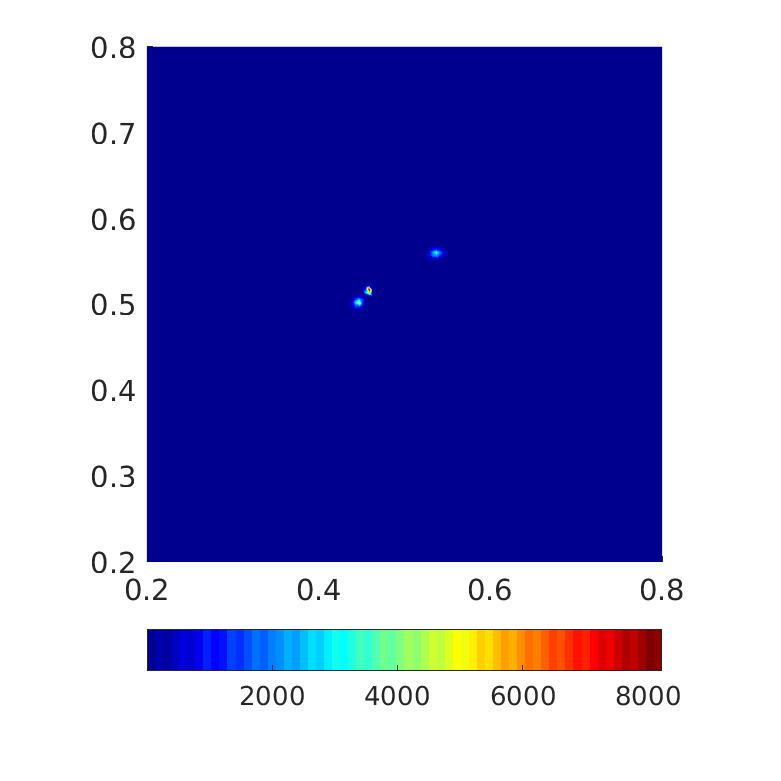}\hspace*{-1.5em}
\includegraphics[trim={3em 0 0 0}, clip, width=0.29\textwidth]{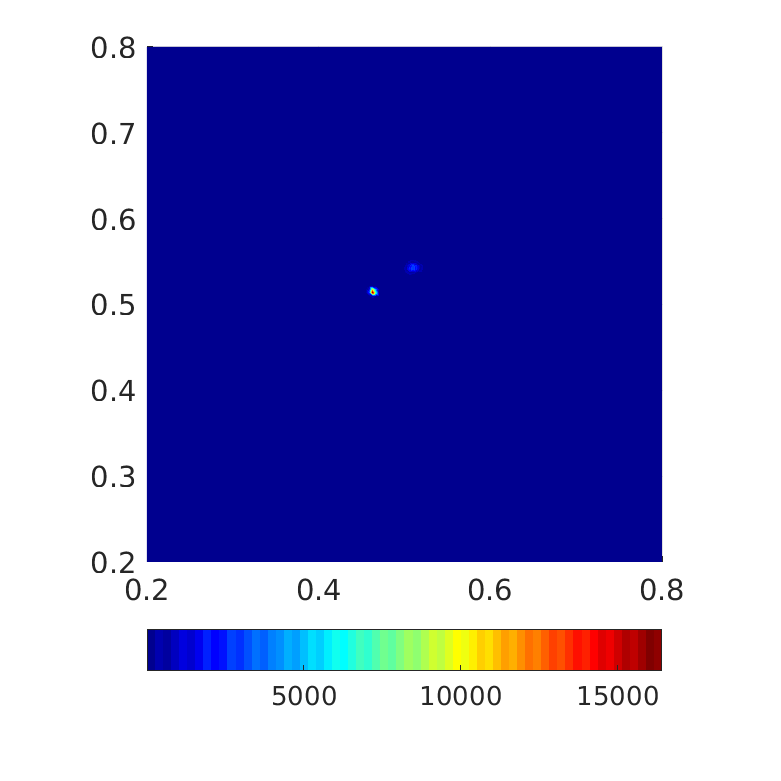}\hspace*{-1.5em}
\includegraphics[trim={3em 0 0 0}, clip, width=0.29\textwidth]{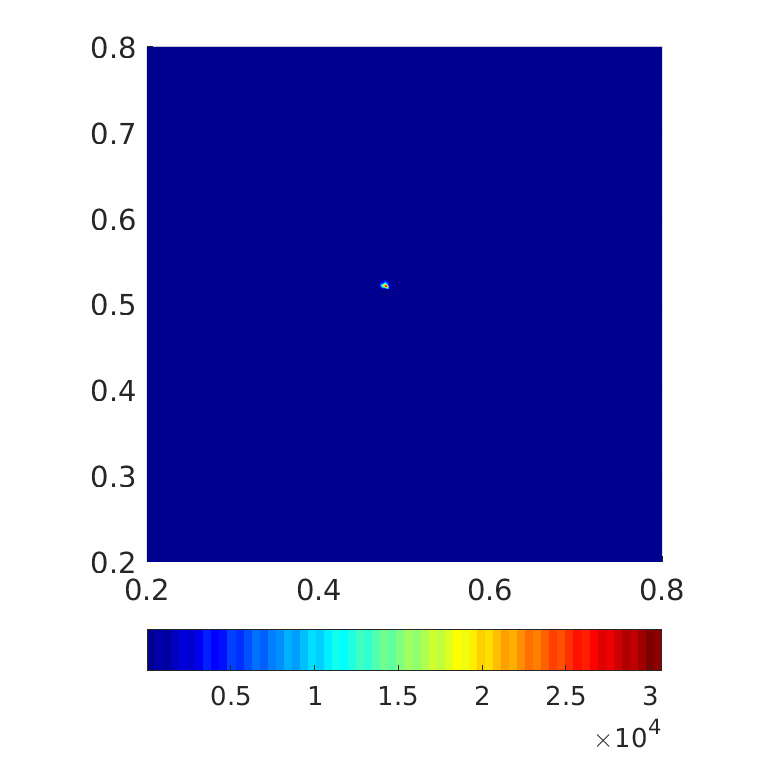}\hspace*{-1.5em}}
\caption{Comparison of all four schemes with $W(\bx)=|\bx|$ and initial data \eqref{eq:threeblobs}. From left to right: $t=2.5M,5M,5.5M,6M$. The normalization factor is $M=0.0910$.}\label{fig:2Dmodulus}
\end{figure}

To numerically verify that the 2D scheme indeed satisfies Lemma \ref{lem:2Dtrunc}, we calculate the convergence rates of the four schemes in the case of the interaction potential $W(x,y) = \sqrt{x^2+xy+y^2}$. This particular potential is chosen in order to highlight the fact that $\nabla W(-\bx) = - \nabla W(\bx)$ is sufficient for Lemma \ref{lem:2Dtrunc} to hold. The 2D Monge--Kantorovich distance is calculated by using the optimal transport algorithm in \cite{multiOT, EMD}. As initial data we choose \eqref{eq:oneblob} with $C=36$ on $[0,2]^2$. The reference solutions are computed with the respective numerical schemes on a $2^{11} \times 2^{11}$ grid. As can be seen in Table \ref{tab:smooth2D}, the convergence rates are close to two for both 2nd LxF and 2nd upw.

\begin{table}[ht!]
\centering
\small 
\begin{tabular}{|c|l|c|l|c|l|c|l|c|}
\cline{2-9}
\multicolumn{1}{c}{} & \multicolumn{2}{|c|}{1st LxF} & \multicolumn{2}{|c|}{1st upw} & \multicolumn{2}{|c|}{2nd LxF} & \multicolumn{2}{|c|}{2nd upw} \\
\hline 
$ n $   & $d_1$          & OOC       & $d_1$          & OOC       & $d_1$          & OOC       &  $d_1$ & OOC \\ 
 \hline 
$ 16 $  & $ 7.84e-03 $&          & $ 4.03e-03 $  &          & $ 3.90e-03 $ &          & $ 5.57e-03 $ & \\ 
$ 32 $  & $ 2.62e-03 $& $ 1.58 $ & $ 1.17e-03 $  & $ 1.78 $ & $ 8.39e-04 $ & $ 2.21 $ & $ 1.23e-03 $ & $ 2.18 $ \\ 
$ 64 $  & $ 8.10e-04 $& $ 1.69 $ & $ 1.12e-03 $  & $ 0.07 $ & $ 1.02e-03 $ & $ -0.27$ & $ 1.28e-03 $ & $ -0.06$ \\ 
$ 128 $ & $ 7.31e-04 $& $ 0.15 $ & $ 2.85e-04 $  & $ 1.97 $ & $ 8.14e-05 $ & $ 3.64 $ & $ 4.58e-05 $ & $ 4.80 $ \\ 
$ 256 $ & $ 3.43e-04 $& $ 1.09 $ & $ 1.37e-04 $  & $ 1.06 $ & $ 2.01e-05 $ & $ 2.02 $ & $ 1.10e-05 $ & $ 2.06 $ \\ 
$ 512 $ & $ 1.46e-04 $& $ 1.23 $ & $ 0.58e-04 $  & $ 1.23 $ & $ 0.36e-05 $ & $ 2.47 $ & $ 0.17e-05 $ & $ 2.66 $ \\ 
\hline 
\end{tabular}
\caption{Convergence rates for $W(x,y) = \sqrt{x^2+xy+y^2}$ with blob initial data ($C=36$) centered at $(1,1)$ on $[0,2]^2$ at time $t=0.075M$.}\label{tab:smooth2D}
\end{table}

We follow up with the aggregation dynamics in the case of initial data \eqref{eq:threeblobs} and the potential $W({\bf x})=|{\bf x}|$. This test case was considered in both \cite{CJLV16} and \cite{DLV2017}. The simulations are presented in Figure \ref{fig:2Dmodulus}. The second-order schemes clearly resolve the solution more sharply than the first-order schemes. But, as the second-order methods require the calculation of twice as many convolutions as the first-order methods in each timestep, the runtimes of the second-order methods are (more than) twice as high as those for the first-order methods. Reducing the resolution of the grid to $202 \times 202$, the runtimes of the second-order methods are lower than those of the first-order methods on a $256\times 256$ grid. Still the second-order methods are sharper than the first-order methods, see Figure \ref{fig:runtime}. We conclude that the computational efficiency of the second-order schemes is higher than that of the first-order schemes.

\begin{figure}
\centering
\subfigure[1st order LxF $256\times256$]{\includegraphics[trim={0 4em 0 4em}, clip, width=0.32\textwidth]{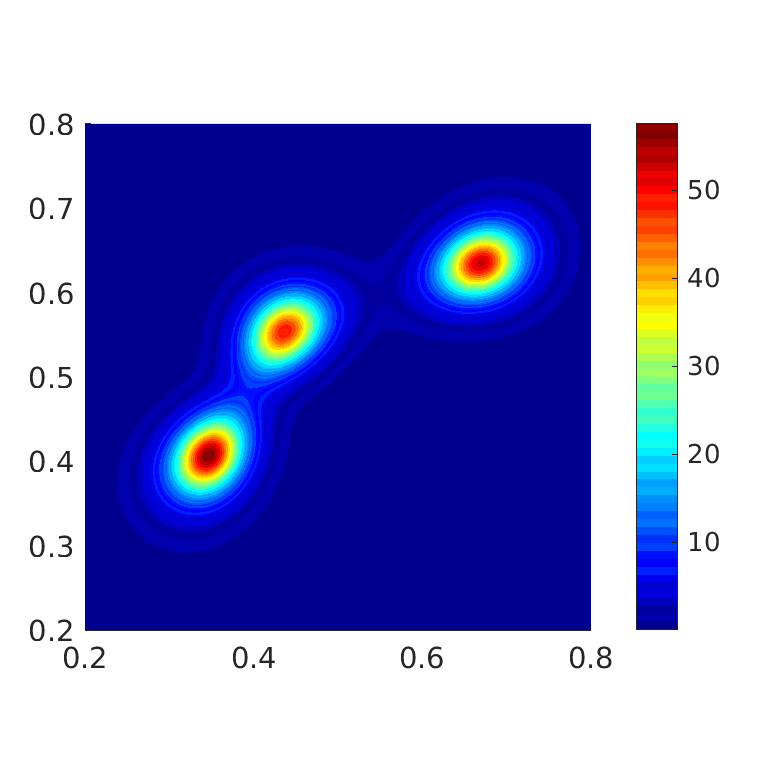}}
\subfigure[2nd order LxF $202\times 202$]{\includegraphics[trim={0 4em 0 4em}, clip, width=0.32\textwidth]{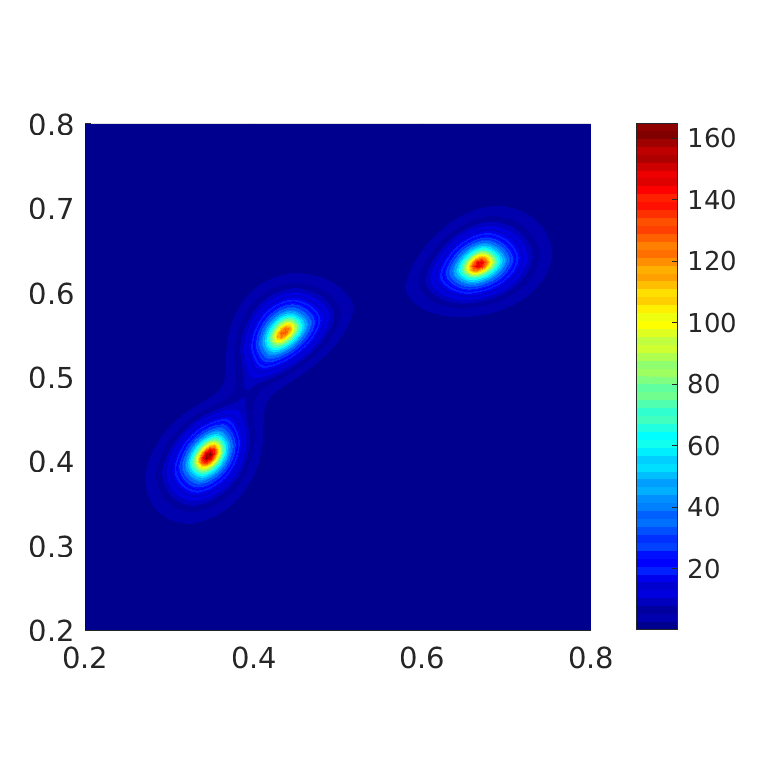}}\\
\subfigure[1st order upwind $256\times256$]{\includegraphics[trim={0 4em 0 4em}, clip, width=0.32\textwidth]{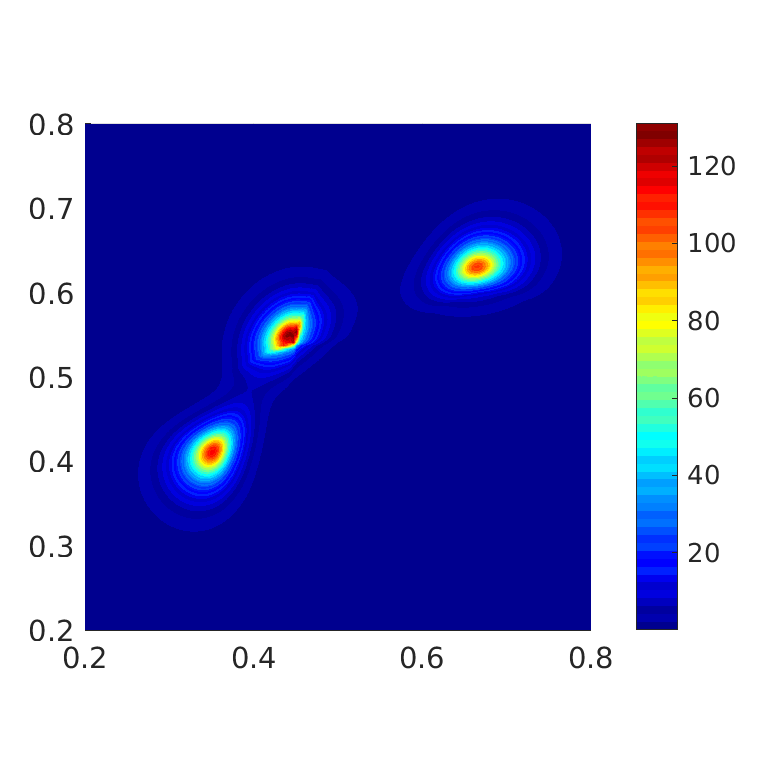}}
\subfigure[2nd order upwind $202 \times 202$]{\includegraphics[trim={0 4em 0 4em}, clip, width=0.32\textwidth]{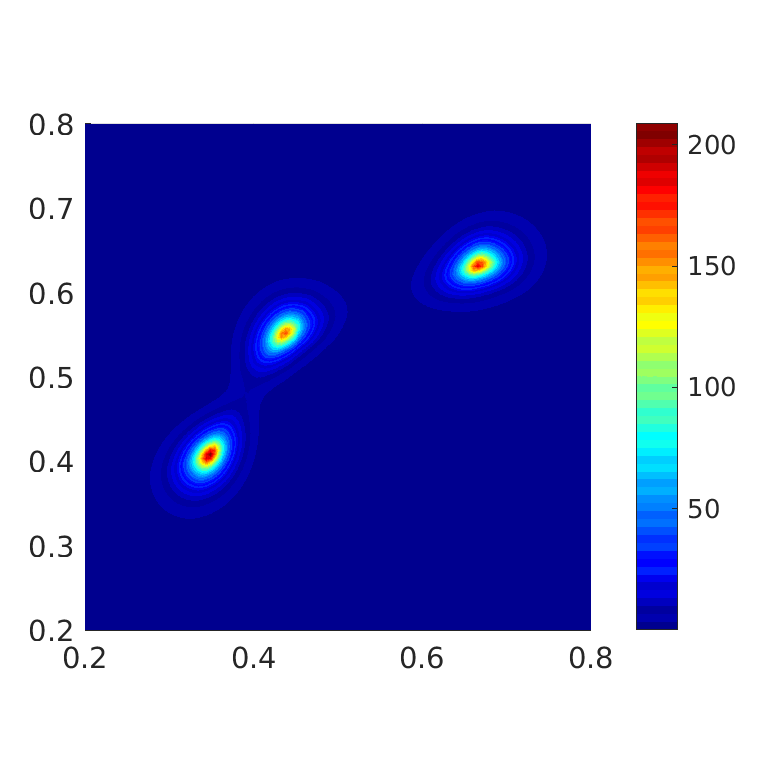}}
\caption{Numerical experiment with $W(\bx)=|\bx|$ at $t=2.5M$. The normalization factor is $M=0.0910$.} \label{fig:runtime}
\end{figure}

\subsubsection{Dissipation of the interaction energy}
After several numerical experiments we observe that the energy \eqref{eq:int_energ} of the second-order numerical scheme developed in this paper seems to be monotonically decreasing over time when $W$ satisfies \ref{cond:lip}--\ref{cond:convex}, both in 1D and 2D. Figure \ref{fig:energy} depicts the decreasing energy for the potentials $W(\bx)=|\bx|$ and $W(\bx)=1-\exp(-5|\bx|)$ with initial data \eqref{eq:threeblobs}. As proven in Section \ref{subsec-energy}, we know that this is almost true for our scheme in the sense that any energy production can be made arbitrarily small. The decay in energy is in accordance with the existing theory for gradient flow solutions of \eqref{eq:agg_eq}. 

\begin{figure}
\centering
\subfigure{
\includegraphics[width=0.3\textwidth]{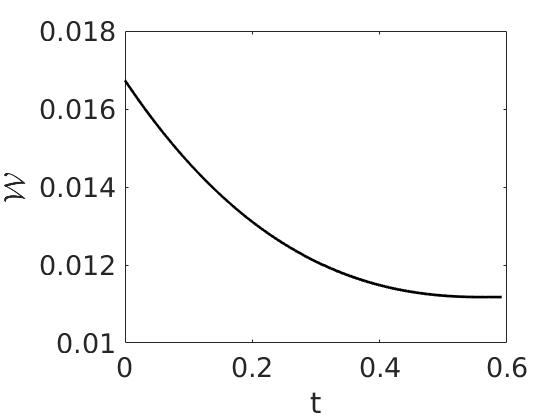}}
\subfigure{
\includegraphics[width=0.3\textwidth]{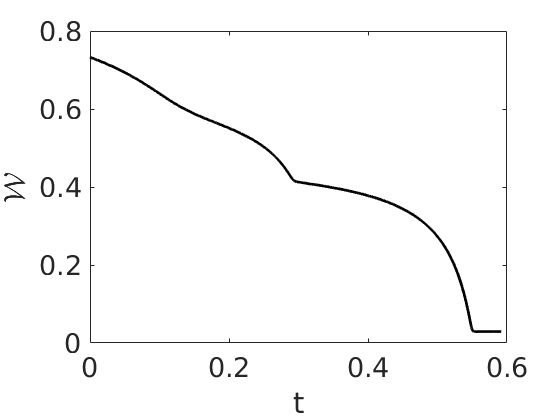}}
\caption{Decay of the energy \eqref{eq:int_energ}. Computed with 2nd LxF on a $256\times 256$ grid. Left: $W(\bx)=|\bx|$. Right: $W(\bx)=1-\exp(-5|\bx|)$.}
\label{fig:energy}
\end{figure}

\subsubsection{Attractive-repulsive potentials}
In this section we only consider initial data consisting of one blob, $\rho^0(x,y)=\frac{1}{M}b(x,y,1,1,10)$. First we study the numerical schemes with the potential $W(\bx)=\unitfrac{1}{4}|\bx|^4-\hf|\bx|^2$, which is fully covered by Theorem \ref{thrm:conv2d}. With this potential, the solution of \eqref{eq:agg_eq} should converge to the uniform distribution on a circle, which we call a $\delta$--ring, of radius $\unitfrac{\sqrt{3}}{3}$ as $t \to \infty$, see \cite{BCLR2011,BCLR2013}. All the numerical schemes form an approximation to a $\delta$--ring, see Figure \ref{fig:attrep2D1}(b), but the 1st upw scheme looks quite different from the others before that point, see Figure \ref{fig:attrep2D1}(a). The schemes form spikes along the circles in Figure \ref{fig:attrep2D1}(b) (with the exception of the 1st LxF scheme), which are plausibly caused by the attempt to approximate a circle on a rectangular grid.

\begin{figure}
\centering
\subfigure[$t=5M$]{
\includegraphics[trim={3em 0 0 0}, clip, width=0.26\textwidth]{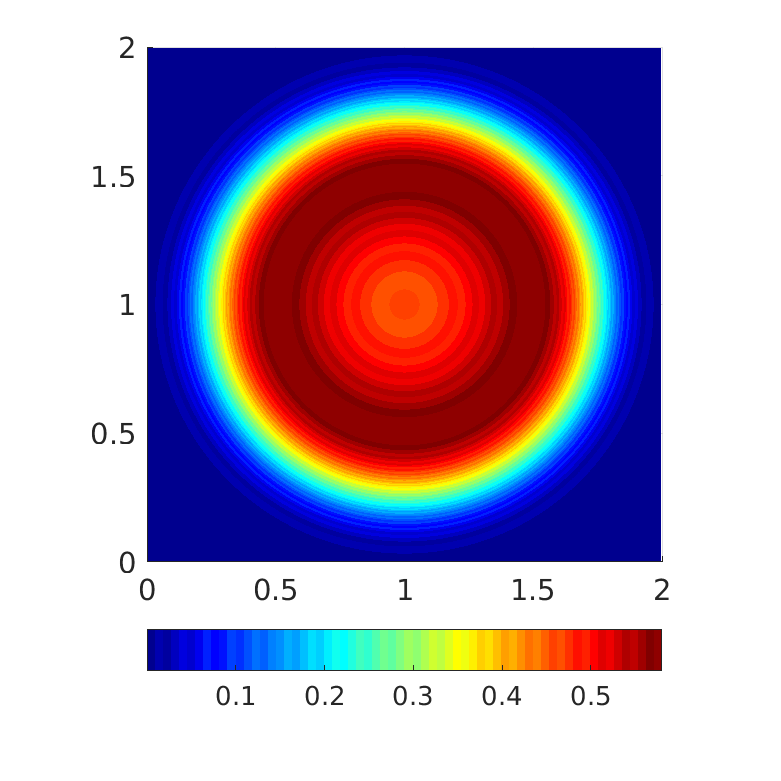}\hspace*{-2em}
\includegraphics[trim={3em 0 0 0}, clip, width=0.26\textwidth]{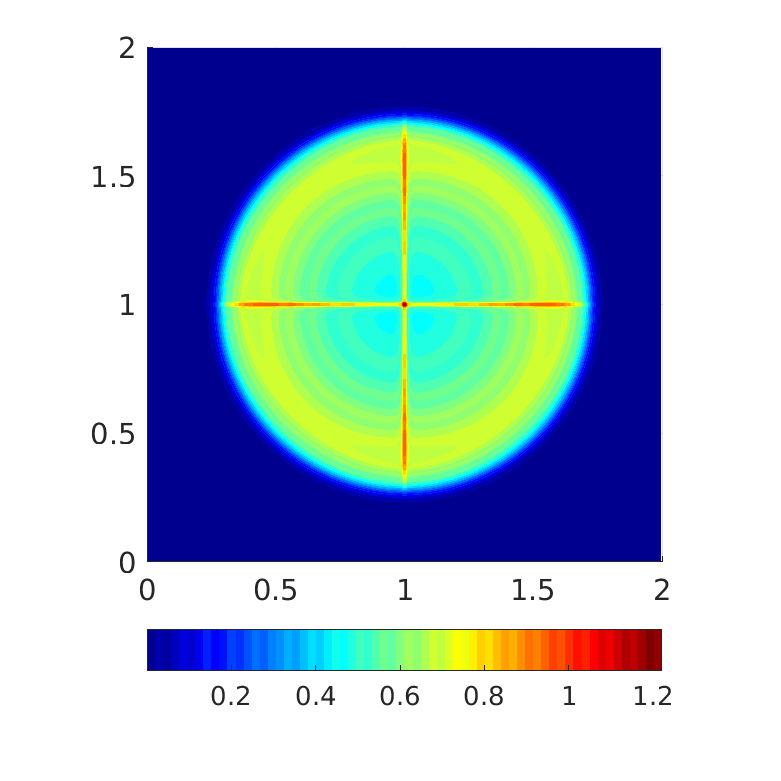}\hspace*{-2em}
\includegraphics[trim={3em 0 0 0}, clip, width=0.26\textwidth]{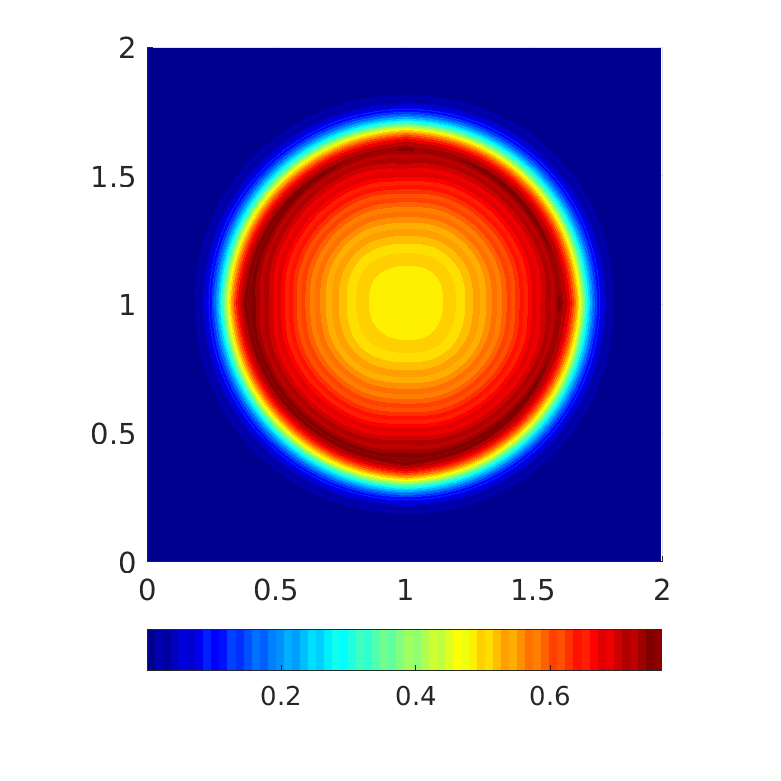}\hspace*{-2em}
\includegraphics[trim={3em 0 0 0}, clip, width=0.26\textwidth]{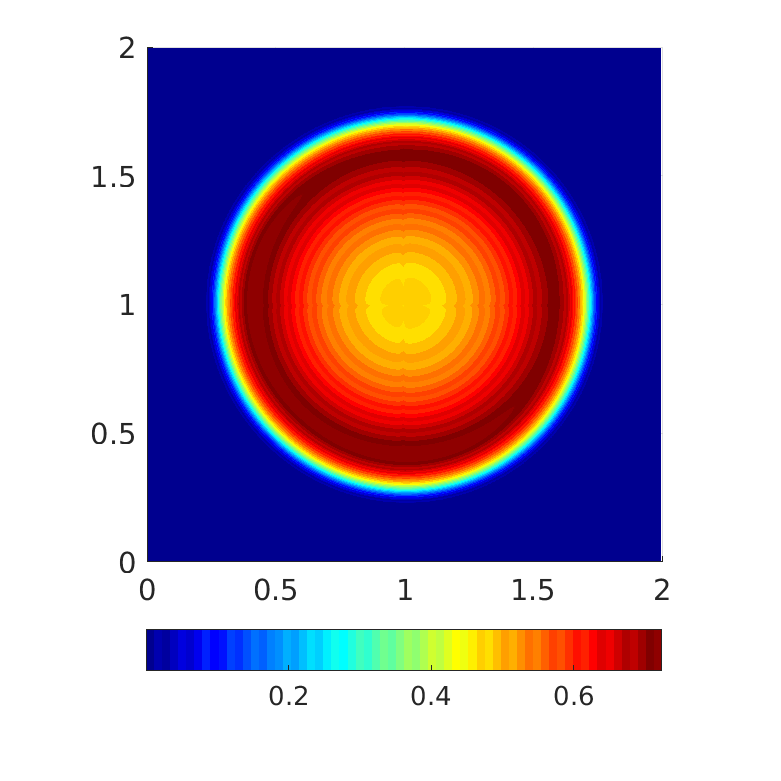}\hspace*{-2em}}
\subfigure[$t=20M$]{
\includegraphics[trim={3em 0 0 0}, clip, width=0.26\textwidth]{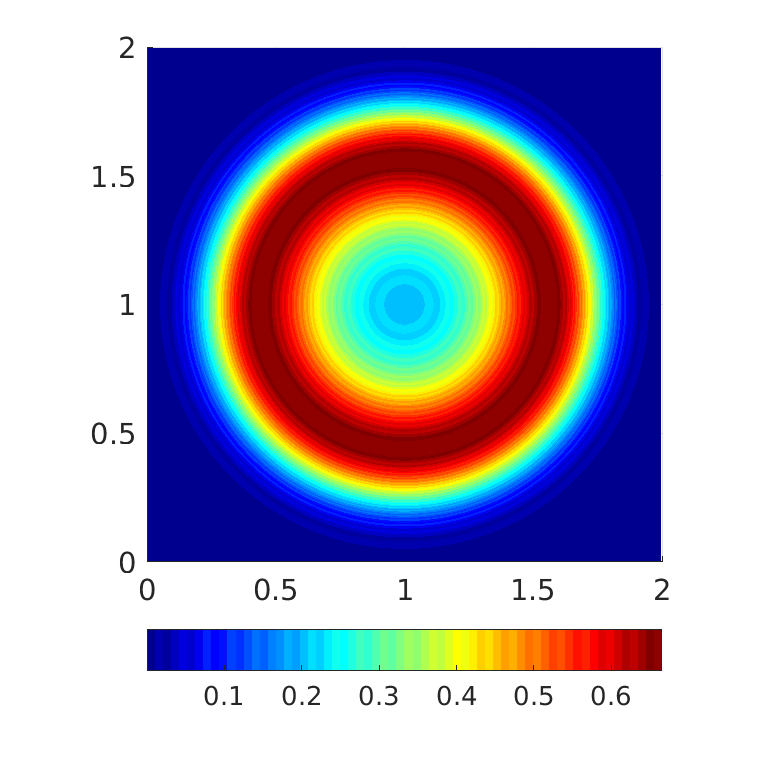}\hspace*{-2em}
\includegraphics[trim={3em 0 0 0}, clip, width=0.26\textwidth]{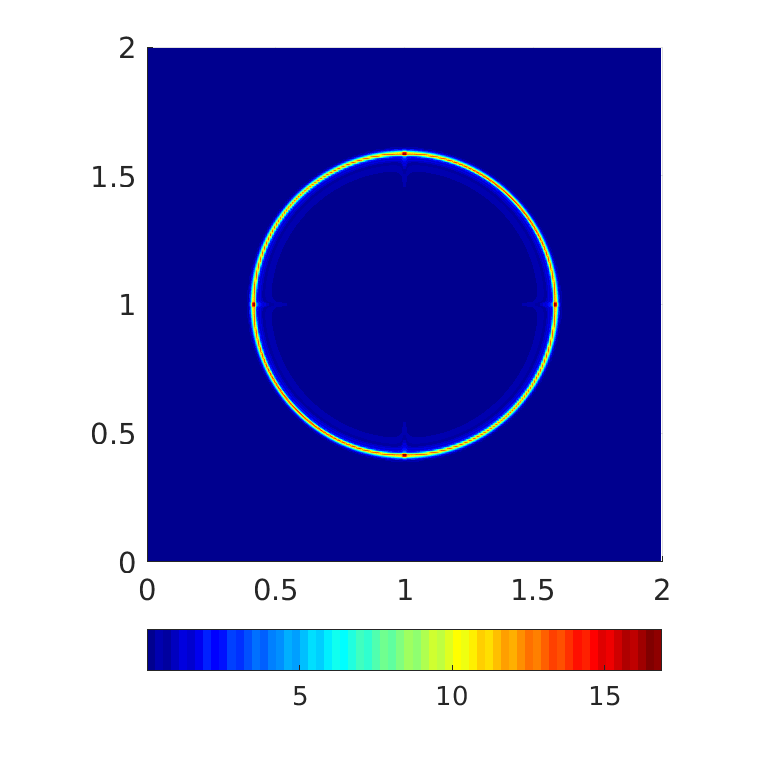}\hspace*{-2em}
\includegraphics[trim={3em 0 0 0}, clip, width=0.26\textwidth]{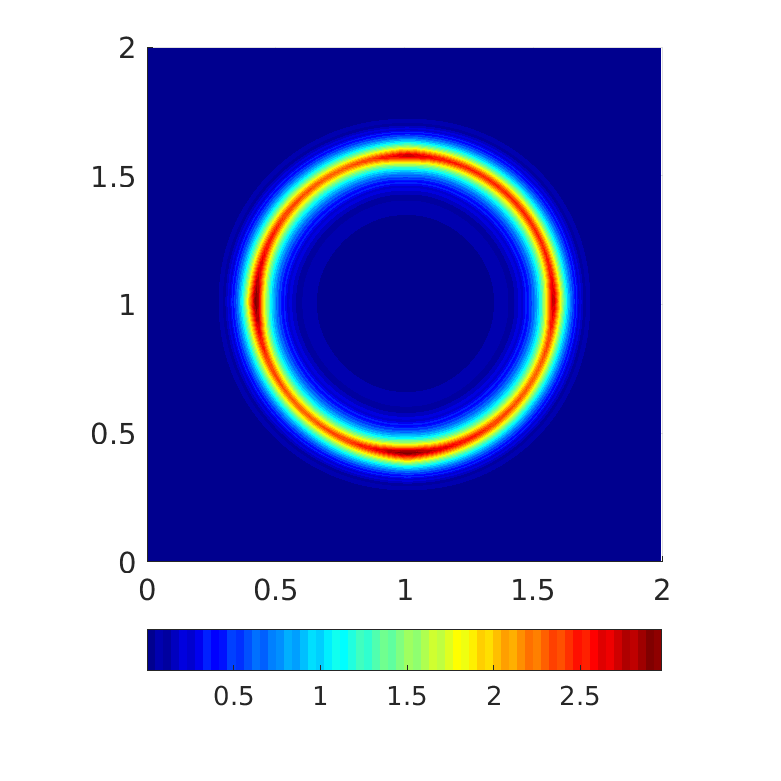}\hspace*{-2em}
\includegraphics[trim={3em 0 0 0}, clip, width=0.26\textwidth]{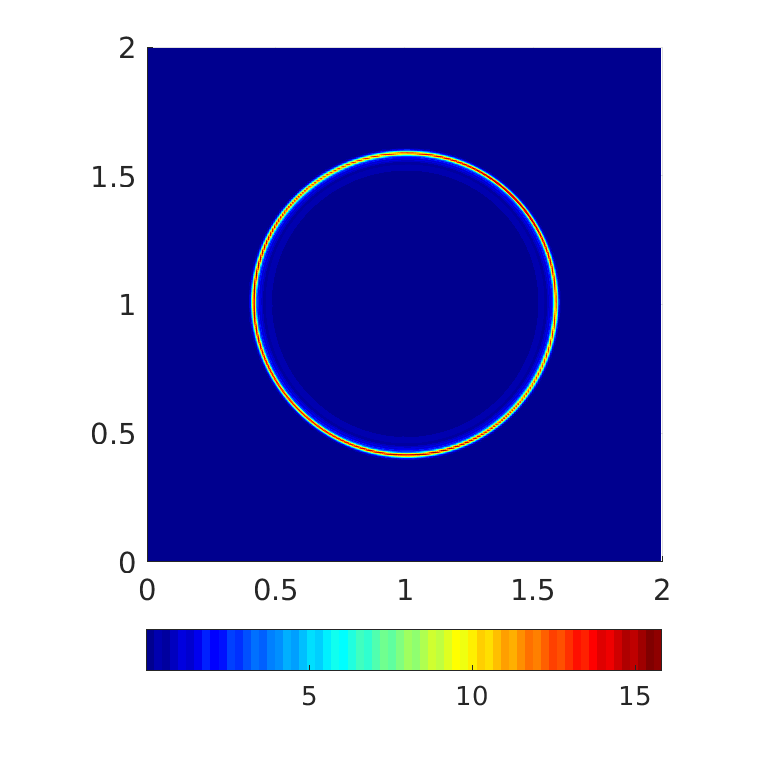}\hspace*{-2em}}
\caption{Comparison of all four schemes with $W(\bx)=0.25|\bx|^4-0.5|\bx|^2$ at two different times.  One blob centered at $(1,1)$ as initial data. From left to right: 1st LxF, 1st upw, 2nd LxF, 2nd upw. The normalization factor is $M=0.3137$.}\label{fig:attrep2D1}
\end{figure}

We now turn to a more singular potential, the 2D version of the first potential in Section \ref{sec:attrep1d}, $W(\bx)=\hf |\bx|^2-\log |\bx|/\sqrt{2\pi}$. In this case we do not have any proof of the convergence properties of the scheme \eqref{eq:laxfr2D}, but we know from \cite{FHK2011,BLL,CDM} that the steady state exact solution of \eqref{eq:agg_eq} is the characteristic of the unit disk with height $\sqrt{2/\pi}$. The simulations can be found in Figures \ref{fig:attrep2D2}(a)--(b). All four schemes act similarly to their 1D counterpart, see Figure \ref{fig:attrep1d}.

\begin{figure}
\centering
\subfigure[$t=4M$]{
\includegraphics[trim={3em 0 0 0}, clip, width=0.26\textwidth]{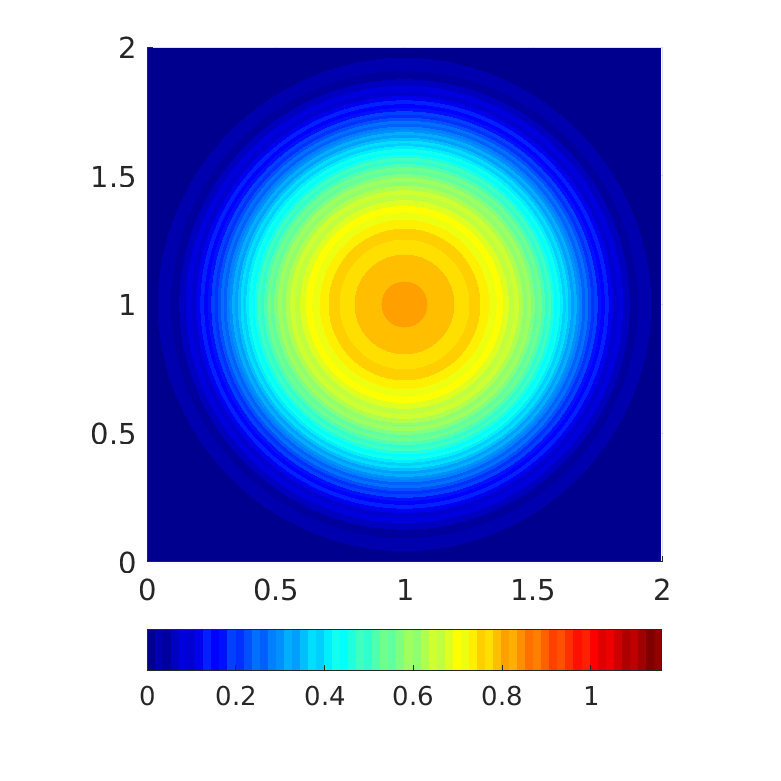}\hspace*{-2em}
\includegraphics[trim={3em 0 0 0}, clip, width=0.26\textwidth]{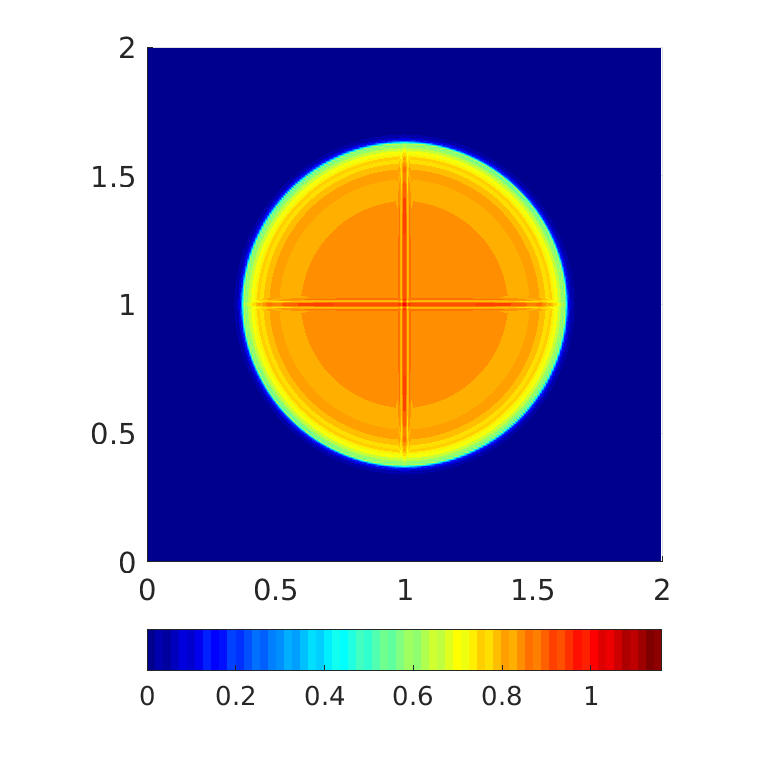}\hspace*{-2em}
\includegraphics[trim={3em 0 0 0}, clip, width=0.26\textwidth]{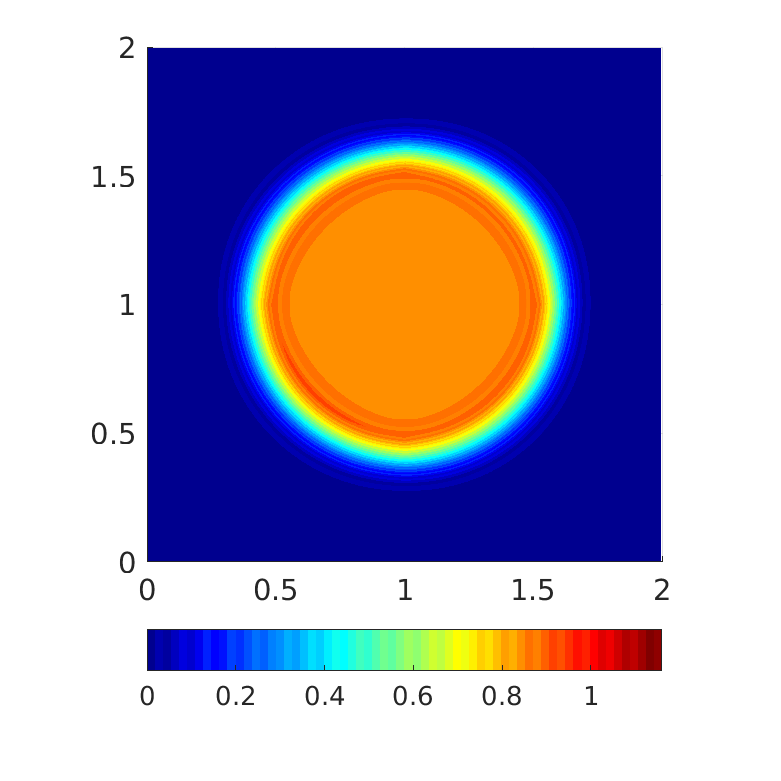}\hspace*{-2em}
\includegraphics[trim={3em 0 0 0}, clip, width=0.26\textwidth]{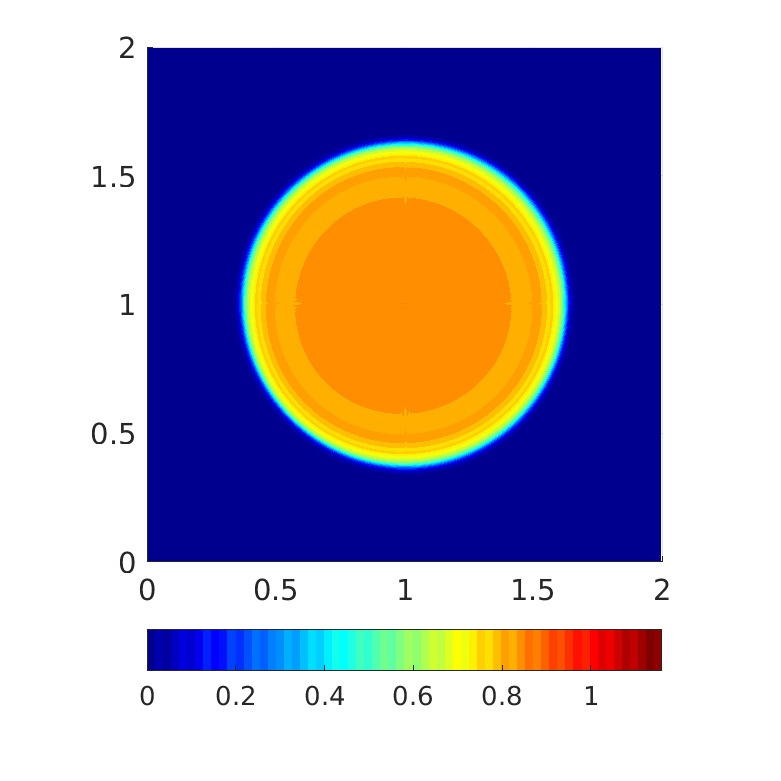}\hspace*{-2em}}
\vspace*{-0.5em}\subfigure[$t=10M$]{
\includegraphics[trim={3em 0 0 0}, clip, width=0.26\textwidth]{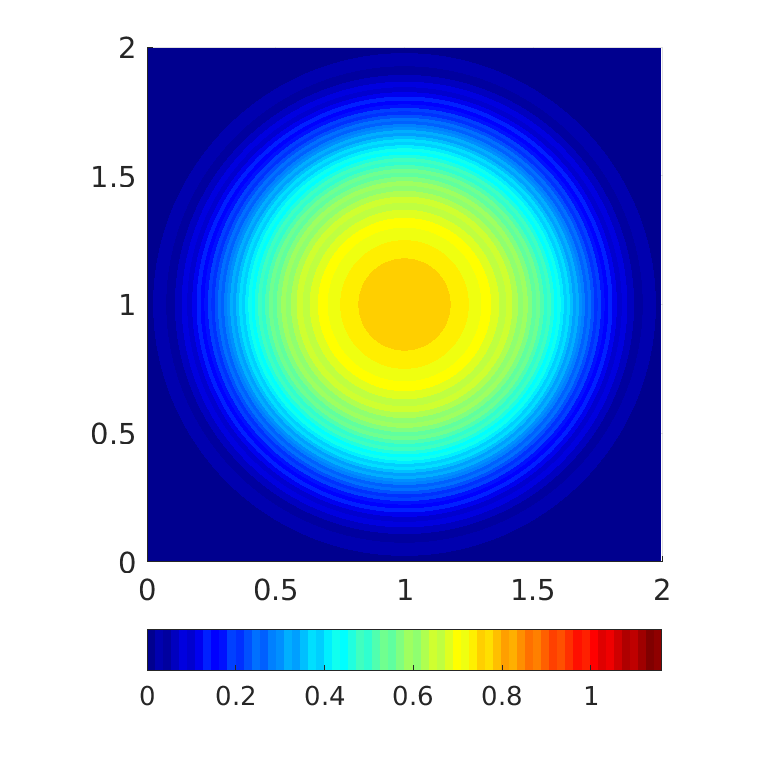}\hspace*{-2em}
\includegraphics[trim={3em 0 0 0}, clip, width=0.26\textwidth]{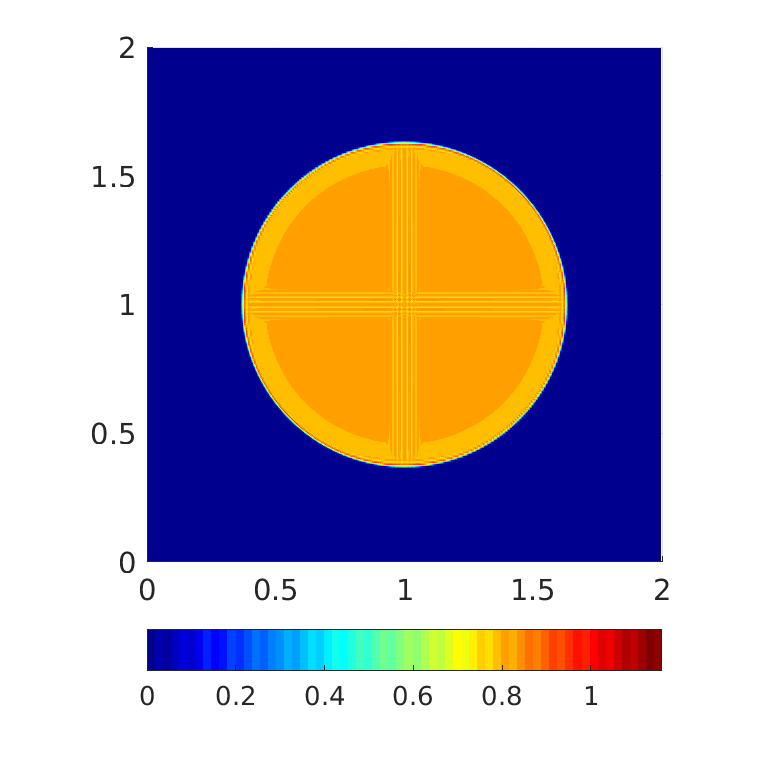}\hspace*{-2em}
\includegraphics[trim={3em 0 0 0}, clip, width=0.26\textwidth]{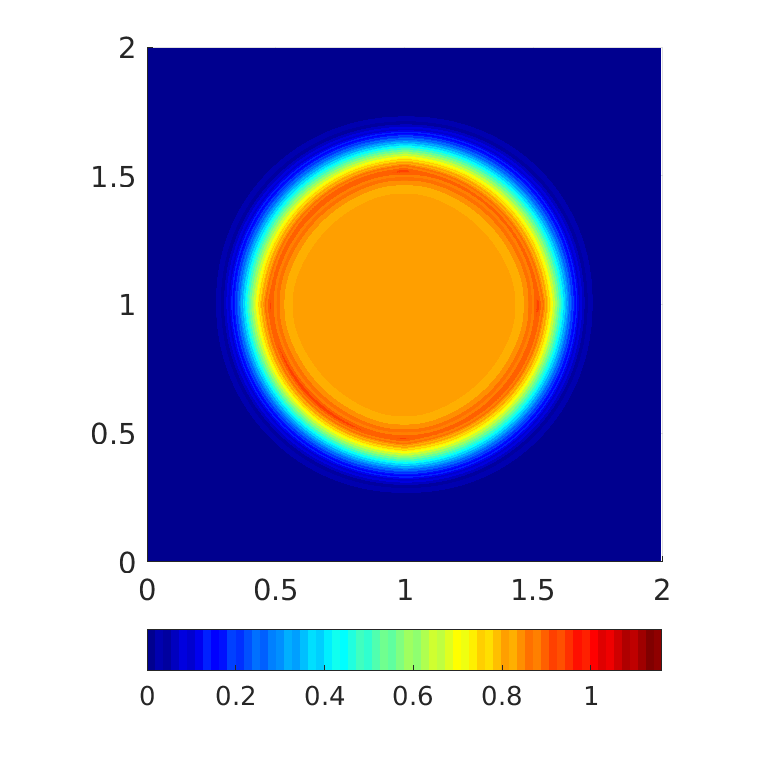}\hspace*{-2em}
\includegraphics[trim={3em 0 0 0}, clip, width=0.26\textwidth]{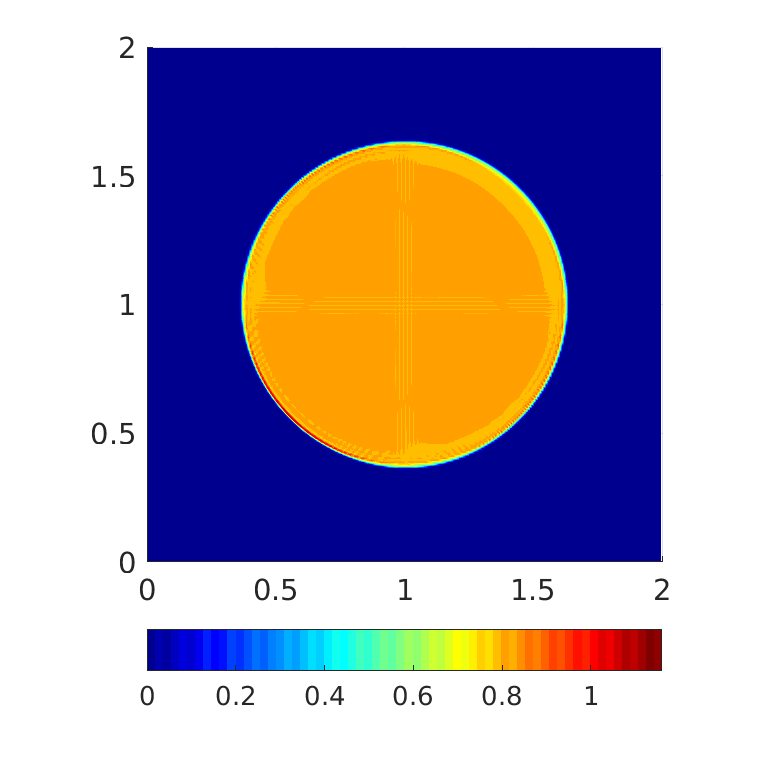}\hspace*{-2em}}
\caption{Comparison of all four schemes with $W(\bx)=\hf|\bx|^2-\log(|\bx|)/\sqrt{2\pi}$ at two different times. One blob centered at $(1,1)$ as initial data. From left to right: 1st LxF, 1st upw, 2nd LxF, 2nd upw. The normalization factor is $M=0.3137$}\label{fig:attrep2D2}
\end{figure}

\section{Conclusions and open questions}
We have developed a (formally) second-order accurate scheme for the aggregation equation \eqref{eq:agg_eq} that is shown to have a uniformly convergent subsequence in the Monge--Kantorovich distance $d_1$ to a distributional solution of \eqref{eq:agg_eq} under the assumptions \ref{cond:lip}--\ref{cond:c1}. Under the additional assumption \ref{cond:convex} the limit is shown to be the unique gradient flow solution of \eqref{eq:agg_eq}. (See Theorems \ref{thrm:main} and \ref{thrm:conv2d} for the exact statements.)

Numerical examples have been provided to demonstrate that the scheme can indeed obtain a second-order convergence rate when the solution is smooth enough and to show that it resolves the solution more sharply than the corresponding first-order schemes. Examples showing that the scheme also handles attractive-repulsive potentials, for which the convergence of the scheme is unknown and not covered by the theory, are provided. An overall good qualitative behavior is observed albeit with minor overshoots and oscillations which are typically present in other finite volume schemes \cite{CCH15} or variational schemes \cite{CRW16} due to the singularity of the asymptotic behavior of the solutions for these specific cases.

Finding a rate of convergence for our scheme is currently out of reach. Due to the reconstruction procedure utilized in the scheme presented here, it is not covered by the convergence rate results in \cite{DLV2016, DLV2017}. A proof of a rate of convergence would be highly desirable, but given the immense difficulty in proving high-order (higher than $\Dx^\hf$) convergence rates for numerical methods for hyperbolic conservation laws \eqref{eq:burgers}, this is expected to be very challenging.

\section*{Acknowledgments}
\small{JAC was partially supported by the EPSRC grant number EP/P031587/1 and the Advanced Grant Nonlocal-CPD (Nonlocal PDEs for Complex Particle Dynamics: Phase Transitions, Patterns and Synchronization) of the European Research Council Executive Agency (ERC) under the European Union’s Horizon 2020 research and innovation programme (grant agreement No. 883363). Parts of this research was conducted at the Institut Mittag--Leffler during the fall of 2016, and at SAM, ETH Z\"urich during the spring of 2017, and the authors would like to thank both institutions for their warm hospitality.}

\bibliography{references}

\begin{thebibliography}{10}

\bibitem{AGS2005}
L.~Ambrosio, N.~Gigli, and G.~Savar\'e.
\newblock {\em Gradient flows in metric spaces and in the space of probability
  measures}.
\newblock Lectures in Mathematics ETH Z\"urich. Birkh\"auser Verlag, Basel,
  2005.

\bibitem{BCLR2013}
D.~Balagu\'e, J.~A. Carrillo, T.~Laurent, and G.~Raoul.
\newblock Dimensionality of local minimizers of the interaction energy.
\newblock {\em Arch. Ration. Mech. Anal.}, 209(3):1055--1088, 2013.

\bibitem{BCLR2011}
D.~Balagu\'e, J.~A. Carrillo, T.~Laurent, and G.~Raoul.
\newblock Nonlocal interactions by repulsive-attractive potentials: radial
  ins/stability.
\newblock {\em Phys. D}, 260:5--25, 2013.

\bibitem{BCP}
D.~Benedetto, E.~Caglioti, and M.~Pulvirenti.
\newblock A kinetic equation for granular media.
\newblock {\em RAIRO Mod\'el. Math. Anal. Num\'er.}, 31(5):615--641, 1997.

\bibitem{BCL2009}
A.~L. Bertozzi, J.~A. Carrillo, and T.~Laurent.
\newblock Blow-up in multidimensional aggregation equations with mildly
  singular interaction kernels.
\newblock {\em Nonlinearity}, 22(3):683--710, 2009.

\bibitem{BKSUV}
A.~L. Bertozzi, T.~Kolokolnikov, H.~Sun, D.~Uminsky, and J.~von Brecht.
\newblock Ring patterns and their bifurcations in a nonlocal model of
  biological swarms.
\newblock {\em Commun. Math. Sci.}, 13(4):955--985, 2015.

\bibitem{BLL}
A.~L. Bertozzi, T.~Laurent, and F.~L\'eger.
\newblock Aggregation and spreading via the {N}ewtonian potential: the dynamics
  of patch solutions.
\newblock {\em Math. Models Methods Appl. Sci.}, 22(suppl. 1):1140005, 39,
  2012.

\bibitem{BLR2010}
A.~L. Bertozzi, T.~Laurent, and J.~Rosado.
\newblock $l^p$ theory for the multidimensional aggregation equation.
\newblock {\em Comm. Pure Appl. Math.}, 43:415--430, 2010.

\bibitem{BV}
M.~Bodnar and J.~J.~L. Vel{\'a}zquez.
\newblock Friction dominated dynamics of interacting particles locally close to
  a crystallographic lattice.
\newblock {\em Math. Methods Appl. Sci.}, 36(10):1206--1228, 2013.

\bibitem{BCFP2015}
G.~A. Bonaschi, J.~A. Carrillo, M.~Di~Francesco, and M.~A. Peletier.
\newblock Equivalence of gradient flows and entropy solutions for singular
  nonlocal interaction equations in 1{D}.
\newblock {\em ESAIM Control Optim. Calc. Var.}, 21(2):414--441, 2015.

\bibitem{BoJa1998}
F.~Bouchut and F.~James.
\newblock One-dimensional transport equations with discontinuous coefficients.
\newblock {\em Nonlinear Anal.}, 32(7):891--933, 1998.

\bibitem{CV2002}
E.~Caglioti and C.~Villani.
\newblock Homogeneous cooling states are not always good approximations to
  granular flows.
\newblock {\em Arch. Ration. Mech. Anal.}, 163(4):329--343, 2002.

\bibitem{CCH15}
J.~A. Carrillo, A.~Chertock, and Y.~Huang.
\newblock A finite-volume method for nonlinear nonlocal equations with a
  gradient flow structure.
\newblock {\em Communications in Computational Physics}, 17(01):233--258, 2015.

\bibitem{CDM}
J.~A. Carrillo, M.~G. Delgadino, and A.~Mellet.
\newblock Regularity of local minimizers of the interaction energy via obstacle
  problems.
\newblock {\em Comm. Math. Phys.}, 343(3):747--781, 2016.

\bibitem{CDFLS2011}
J.~A. Carrillo, M.~DiFrancesco, A.~Figalli, T.~Laurent, and D.~Slep\v{c}ev.
\newblock Global-in-time weak measure solutions and finite-time aggregation for
  nonlocal interaction equations.
\newblock {\em Duke Math. J.}, 156(2):229--271, 2011.

\bibitem{CFP}
J.~A. Carrillo, L.~C.~F. Ferreira, and J.~C. Precioso.
\newblock A mass-transportation approach to a one dimensional fluid mechanics
  model with nonlocal velocity.
\newblock {\em Adv. Math.}, 231(1):306--327, 2012.

\bibitem{CJLV16}
J.~A. Carrillo, F.~James, F.~Lagouti\`ere, and N.~Vauchelet.
\newblock The {F}ilippov characteristic flow for the aggregation equation with
  mildly singular potentials.
\newblock {\em J. Differential Equations}, 260(1):304--338, 2016.

\bibitem{CMV2003}
J.~A. Carrillo, R.~J. McCann, and C.~Villani.
\newblock Kinetic equilibration rates for granular media and related equations:
  entropy dissipation and mass transportation estimates.
\newblock {\em Rev. Mat. Iberoamericana}, 19(3):971--1018, 2003.

\bibitem{CMV2006}
J.~A. Carrillo, R.~J. McCann, and C.~Villani.
\newblock Contractions in the 2-{W}asserstein length space and thermalization
  of granular media.
\newblock {\em Arch. Ration. Mech. Anal.}, 179(2):217--263, 2006.

\bibitem{CM09}
J.~A. Carrillo and J.~Moll.
\newblock Numerical simulation of diffusive and aggregation phenomena in
  nonlinear continuity equations by evolving diffeomorphisms.
\newblock {\em SIAM J. Sci. Comput.}, 31(6):4305--4329, 2009.

\bibitem{CRW16}
J.~A. Carrillo, H.~Ranetbauer, and M.-T. Wolfram.
\newblock Numerical simulation of nonlinear continuity equations by evolving
  diffeomorphisms.
\newblock {\em J. Comput. Phys.}, 327:186--202, 2016.

\bibitem{DLV2016}
F.~Delarue, F.~Lagouti\`ere, and N.~Vauchelet.
\newblock Convergence order of upwind type schemes for transport equations with
  discontinuous coefficients.
\newblock {\em J. Math. Pures Appl. (9)}, 108(6):918--951, 2017.

\bibitem{DLV2017}
F.~Delarue, F.~Lagouti\`ere, and N.~Vauchelet.
\newblock Convergence analysis of upwind type schemes for the aggregation
  equation with pointy potential.
\newblock {\em arXiv preprint 1709.09416v2}, 2018.

\bibitem{FR1}
K.~Fellner and G.~Raoul.
\newblock Stable stationary states of non-local interaction equations.
\newblock {\em Math. Models Methods Appl. Sci.}, 20(12):2267--2291, 2010.

\bibitem{FR2}
K.~Fellner and G.~Raoul.
\newblock Stability of stationary states of non-local equations with singular
  interaction potentials.
\newblock {\em Math. Comput. Modelling}, 53(7-8):1436--1450, 2011.

\bibitem{FHK2011}
R.~C. Fetecau, Y.~Huang, and T.~Kolokolnikov.
\newblock Swarm dynamics and equilibria for a nonlocal aggregation model.
\newblock {\em Nonlinearity}, 24(10):2681--2716, 2011.

\bibitem{FiLauPe04}
F.~Filbet, P.~Lauren\c{c}ot, and B.~Perthame.
\newblock Derivation of hyperbolic models for chemosensitive movement.
\newblock {\em J. Math. Biology}, 50:189--207, 2004.

\bibitem{FS2016}
U.~S. Fjordholm and S.~Solem.
\newblock Second-order convergence of monotone schemes for conservation laws.
\newblock {\em SIAM J. Numer. Anal.}, 54(3):1920--1945, 2016.

\bibitem{GR91}
E.~Godlewski and P.-A. Raviart.
\newblock {\em Hyperbolic systems of conservation laws}, volume 3/4 of {\em
  Math\'ematiques \& Applications (Paris) [Mathematics and Applications]}.
\newblock Ellipses, Paris, 1991.

\bibitem{GT06}
L.~Gosse and G.~Toscani.
\newblock Lagrangian numerical approximations to one-dimensional
  convolution-diffusion equations.
\newblock {\em SIAM J. Sci. Comput.}, 28(4):1203--1227, 2006.

\bibitem{GST01}
S.~Gottlieb, C.-W. Shu, and E.~Tadmor.
\newblock {High order time discretization methods with the strong stability
  property}.
\newblock {\em SIAM Review}, 43:89–112, 2001.

\bibitem{HP2006}
D.~D. Holm and V.~Putkaradze.
\newblock Formation of clumps and patches in self-aggregation of finite-size
  particles.
\newblock {\em Phys. D}, 220(2):183--196, 2006.

\bibitem{HB2010}
Y.~Huang and A.~L. Bertozzi.
\newblock Self-similar blowup solutions to an aggregation equation in
  {$\mathbf{R}^n$}.
\newblock {\em SIAM J. Appl. Math.}, 70(7):2582--2603, 2010.

\bibitem{JV2013}
F.~James and N.~Vauchelet.
\newblock Chemotaxis: from kinetic equations to aggregation dynamics.
\newblock {\em Nonlinear Diff. Eq. and Appl.}, 20(3):101--127, 2013.

\bibitem{JamesVauch2015}
F.~James and N.~Vauchelet.
\newblock Numerical methods for one-dimensional aggregation equations.
\newblock {\em SIAM J. Numer. Anal.}, 53(2):895--916, 2015.

\bibitem{JV2016}
F.~James and N.~Vauchelet.
\newblock Equivalence between duality and gradient flow solutions for
  one-dimensional aggregation equations.
\newblock {\em Discrete Contin. Dyn. Syst.}, 36(3):1355--1382, 2016.

\bibitem{JamesVauch2016}
F.~James and N.~Vauchelet.
\newblock One-dimensional aggregation equation after blow up: existence,
  uniqueness and numerical simulation.
\newblock {\em Netw. Heterog. Media}, 11(1):163--180, 2016.

\bibitem{KeSe70}
E.~F. Keller and L.~A. Segel.
\newblock Initiation of slime mold aggregation viewed as an instability.
\newblock {\em J. Theor. Biol.}, 26:399--415, 1970.

\bibitem{review2013}
T.~Kolokolnikov, J.~A. Carrillo, A.~Bertozzi, R.~Fetecau, and M.~Lewis.
\newblock Emergent behaviour in multi-particle systems with non-local
  interactions [{E}ditorial].
\newblock {\em Phys. D}, 260:1--4, 2013.

\bibitem{KSUB}
T.~Kolokolnikov, H.~Sun, D.~Uminsky, and A.~L. Bertozzi.
\newblock Stability of ring patterns arising from two-dimensional particle
  interactions.
\newblock {\em Phys. Rev. E}, 84:015203, Jul 2011.

\bibitem{KNR95}
D.~Kr\"oner, S.~Noelle, and M.~Rokyta.
\newblock Convergence of higher order upwind finite volume schemes on
  unstructured grids for scalar conservation laws in several space dimensions.
\newblock {\em Numer. Math.}, 71(4):527--560, 1995.

\bibitem{Kuz76}
N.~N. Kuznetsov.
\newblock {Accuracy of some approximate methods for computing the weak
  solutions of a first-order quasi-linear equation}.
\newblock {\em USSR Computational Mathematics and Mathematical Physics},
  16(6):105–119, 1976.

\bibitem{LeVeque}
R.~J. LeVeque.
\newblock {\em Finite volume methods for hyperbolic problems}.
\newblock Cambridge Texts in Applied Mathematics. Cambridge University Press,
  Cambridge, 2002.

\bibitem{LT}
H.~Li and G.~Toscani.
\newblock Long--time asymptotics of kinetic models of granular flows.
\newblock {\em Arch. Ration. Mech. Anal.}, 172:407--428, 2004.

\bibitem{EMD}
W.~Li, E.~K. Ryu, S.~Osher, W.~Yin, and W.~Gangbo.
\newblock A parallel method for earth mover’s distance.
\newblock {\em J. Sci. Comput.}, 75:182--197, 2018.

\bibitem{multiOT}
J.~Liu, W.~Yin, W.~Li, and Y.~T. Chow.
\newblock Multilevel optimal transport: a fast approximation of wasserstein-1
  distances.
\newblock {\em arXiv preprint arXiv:1810.00118}, 2018.

\bibitem{MCO2005}
D.~Morale, V.~Capasso, and K.~Oelschl{\"a}ger.
\newblock An interacting particle system modelling aggregation behavior: from
  individuals to populations.
\newblock {\em J. Math. Biol.}, 50(1):49--66, 2005.

\bibitem{MT14}
S.~Motsch and E.~Tadmor.
\newblock Heterophilious dynamics enhances consensus.
\newblock {\em SIAM Rev.}, 56(4):577--621, 2014.

\bibitem{okubo}
A.~Okubo and S.~Levin.
\newblock {\em Diffusion and Ecological Problems: Modern Perspectives}.
\newblock Springer, Berlin, 2002.

\bibitem{PSTV}
B.~Perthame, C.~Schmeiser, M.~Tang, and N.~Vauchelet.
\newblock Traveling plateaus for a hyperbolic keller-segel system with
  attraction and repulsion: existence and branching instabilities.
\newblock {\em Nonlinearity}, 24:1253--1270, 2011.

\bibitem{TZ97}
Z.-H. Teng and P.~Zhang.
\newblock Optimal {$L^1$}-rate of convergence for the viscosity method and
  monotone scheme to piecewise constant solutions with shocks.
\newblock {\em SIAM J. Numer. Anal.}, 34(3):959--978, 1997.

\bibitem{TB2004}
C.~M. Topaz and A.~L. Bertozzi.
\newblock Swarming patterns in a two-dimensional kinematic model for biological
  groups.
\newblock {\em SIAM J. Appl. Math.}, 65(1):152--174, 2004.

\bibitem{TBL2006}
C.~M. Topaz, A.~L. Bertozzi, and M.~A. Lewis.
\newblock A nonlocal continuum model for biological aggregation.
\newblock {\em Bull. Math. Biol.}, 68(7):1601--1623, 2006.

\bibitem{Villani}
C.~Villani.
\newblock {\em Topics in optimal transportation}, volume~58 of {\em Graduate
  Studies in Mathematics}.
\newblock American Mathematical Society, Providence, RI, 2003.

\bibitem{BU}
J.~H. von Brecht and D.~Uminsky.
\newblock On soccer balls and linearized inverse statistical mechanics.
\newblock {\em J. Nonlinear Sci.}, 22(6):935--959, 2012.

\bibitem{BUKB}
J.~H. von Brecht, D.~Uminsky, T.~Kolokolnikov, and A.~L. Bertozzi.
\newblock Predicting pattern formation in particle interactions.
\newblock {\em Math. Models Methods Appl. Sci.}, 22(suppl. 1):1140002, 31,
  2012.

\end{thebibliography}
\bibliographystyle{abbrv}

\end{document}